\documentclass[reqno, 10pt]{amsart}
\linespread{1.0}
\usepackage[top=4cm, bottom=3cm, left=3.2cm, right=3.2cm]{geometry}
\usepackage{enumerate}
\usepackage{amssymb,amsmath}
\usepackage{graphicx}
\usepackage{subfigure}

\usepackage{amsthm}
\usepackage{amsmath}
\usepackage{amssymb}

\usepackage{extarrows}
\usepackage{epstopdf}
\usepackage{xcolor} 
\usepackage{bm}
\usepackage{geometry} 
\geometry{a4paper} 

\usepackage{graphicx} 


\usepackage{booktabs} 
\usepackage{array} 
\usepackage{paralist} 
\usepackage{verbatim} 
\usepackage{caption} 
\usepackage{cases}

\usepackage{multirow}
\usepackage{graphicx}
\usepackage{float}
\usepackage{subfigure}
\usepackage{algorithmicx,algorithm}
\usepackage{booktabs}
\usepackage{psfrag}
\usepackage[noend]{algpseudocode}

\captionsetup[figure]{labelfont={},labelformat={default},labelsep=period,name={Fig}}
\newtheorem{thm}{Theorem}
\newtheorem{cor}{Corollary}
\newtheorem{lem}{Lemma}

\newtheorem{rem}{Remark}
\allowdisplaybreaks


\newcommand{\bx}{\boldsymbol{x}}
\newcommand{\by}{\boldsymbol{y}}

\numberwithin{equation}{section} \numberwithin{lem}{section}
\numberwithin{thm}{section} \numberwithin{prop}{section}
\numberwithin{cor}{section} \numberwithin{rem}{section}
\title[A unified structure preserving scheme for a multi-species model]{A unified structure preserving scheme for a multi-species model with a gradient flow structure and nonlocal interactions via singular kernels}

\author{
	Yong Zhang$^{\,1}$, Yu Zhao$^{\,2}$and Zhennan Zhou$^{\,3}$ }
\thanks{Corresponding author: Zhennan Zhou.}
\begin{document}
	\maketitle
	\begin{center}
		{\footnotesize
			1-Center for Applied Mathematics, Tianjin University, Tianjin,300072, P.R.China. \\
email: Zhang\_Yong@tju.edu.cn \\
\smallskip
2-School of Mathematical Sciences, Peking University, Beijing, 100871, P.R.China. \\  
email: y.zhao@pku.edu.cn \\	
\smallskip
3-Beijing International Center for Mathematical Research, Peking University, Beijing,\\ 100871, P.R.China. 
email: zhennan@bicmr.pku.edu.cn 
}
	\end{center}
	\maketitle
	\date{}
	\begin{abstract}
		In this paper, we consider a nonlinear and nonlocal parabolic model for multi-species ionic fluids and introduce a semi-implicit finite volume scheme, which is second order accurate in space, first order in time and satisfies the following properties: positivity preserving, mass conservation and energy dissipation. Besides, our scheme involves a fast algorithm on the convolution terms with singular but integrable kernels, which otherwise impedes the accuracy and efficiency of the whole scheme. Error estimates on the fast convolution algorithm are shown next. Numerous numerical tests  are provided to demonstrate the properties, such as unconditional stability, order of convergence, energy dissipation and the complexity of the fast convolution algorithm. Furthermore, extensive numerical experiments are carried out to explore the modeling effects in specific examples, such as, the steric repulsion, the concentration of ions at the boundary and the blowup phenomenon of the Keller-Segel equations.
		 
	\end{abstract}
	\section{Introduction}
	
	
	In this paper, we consider the following parabolic model for multi-species ionic fluids: for $m=1,\cdots,M$, we have
	\begin{align}
	\partial_t c_m(\boldsymbol{x}, t) &= \nabla \cdot \bigl(c_m \nabla \left( 1 + \log c_m + z_m \mathcal{K}*\rho + \mathcal{W}*\theta \right) \bigr), \quad \boldsymbol{x} \in \Omega, ~~ t>0, \label{sym1a}\\
	c_m(\boldsymbol{x},0) &= c^0_{m}(\boldsymbol{x}), \quad \boldsymbol{x} \in \Omega, \label{sym1c}
	\end{align}
	where $c_m = c_m(\boldsymbol{x}, t)$, $m=1,...,M$, is the concentration of the m-th ionic species and $\Omega \subset \mathbb{R}^d, d = 1, 2, 3,$ is a bounded domain or the whole space. 
	Here, $\rho(\boldsymbol{x})$ is the total charge density with $z_m \in \mathbb{Z}$ being the valence of the $m$-th ionic species and $\theta(\boldsymbol{x})$ is the total mass density, which are respectively given by
	\begin{equation}
	\rho(\boldsymbol{x}) = \sum_{m = 1}^{M} z_m c_m, \quad \quad \theta(\boldsymbol{x}) = \sum_{m = 1}^{M} c_m, \quad \quad \boldsymbol{x} \in \Omega. \label{sym1b}
	\end{equation}
	The kernel $\mathcal{K}(\boldsymbol{x})$ represents the effect of the electrostatic interaction while the kernel $\mathcal{W}(\boldsymbol{x})$ represents the generic nonlocal effects associated with the total mass density, such as the gravitational attraction \cite{Biler1993}, the chemotaxis-type aggregation \cite{Keller1970, Keller1971}, the steric repulsion arising from the finite size \cite{fawcett2004liquids,Lee2008,KS1991,barthel1998physical} etc. Thus, 
	\[
	(\mathcal{K}*\rho)(\boldsymbol{x})=\int_{\Omega} \mathcal{K}(\boldsymbol{x} - \boldsymbol{y}) \rho(\boldsymbol{y}) \,\mathrm{d}\boldsymbol{y}\quad \text{and} \quad (\mathcal{W}*\theta)(\boldsymbol{x})=\int_{\Omega} \mathcal{W}(\boldsymbol{x} - \boldsymbol{y}) \theta(\boldsymbol{y}) \,\mathrm{d}\boldsymbol{y}, \quad \boldsymbol{x} \in \Omega
	\]
	serve as potentials of the collective fields that lead to various collection motions of the fluids. Also, we observe that
	\[
	\nabla \cdot \bigl(c_m \nabla \left( 1 + \log c_m \right) \bigr)= \Delta c_m
	\]
	denotes the regular diffusion term as in the heat equation. To complete such a problem, certain boundary conditions are required on $\partial \Omega$, and without loss of generality, we assume no-flux boundary conditions for the fluids unless otherwise specified.
	In this drift-diffusion model, all physical parameters are set as 1 for simplicity in the representation except for $z_m$.
	
	
	Such a drift-diffusion problem is naturally endowed with a gradient flow structure, which is associated with the free energy given by
	\begin{align}
	\label{free2}
	\mathcal{F}
	=&\sum_{m=1}^M \int_{\Omega} c_m\log c_m\,\mathrm{d} \boldsymbol{x}
	+\frac{1}{2}\int_{\Omega}\rho(\boldsymbol{x}) (\mathcal{K}*\rho)(\boldsymbol{x})\,\mathrm{d} \boldsymbol{x}
	+\frac{1}{2}\int_{\Omega}\theta(\boldsymbol{x})(\mathcal{W}*\theta)(\boldsymbol{x})\,\mathrm{d}\boldsymbol{x}.
	\end{align}
	Thus, the dynamics of the fluids are driven by the chemical potential $\mu_m$ in the following way
	\begin{align}
	\partial_t c_m(\boldsymbol{x}, t) = \nabla \cdot (c_m \nabla {\mu}_m ), \quad m = 1, \cdots, M,
	\end{align}
	where $\mu_m$ is  the variational derivative of the free energy $\mathcal{F}$, i.e.,
	\begin{equation}
	\label{potential}
	{\mu}_m = \dfrac{\delta {\mathcal{F}}}{\delta c_m} = 1 + \log c_m + z_m \mathcal{K}*\rho + \mathcal{W}*\theta
	, \quad m = 1, \cdots, M.
	\end{equation}
	
	
	The system (\ref{sym1a})-(\ref{sym1c}) provides a broad framework for modeling nonlinear and nonlocal phenomenons of complex fluids, which naturally appears in various scientific problems. For example, when the kernel $\mathcal W$ vanishes, it reduces to the electrokinetic system for ion transport which is widely used in medicine and biology \cite{Alverts1994,Boron2008}. The transport and distribution of charged particles are crucial in the study of many physical and biological problems, such as ion particles in the electrokinetic fluids \cite{Jin2007}, and ion channels in cell membranes \cite{Bazant2004,Eisenberg1996}. In this scenario, the inclusion of the kernel $\mathcal W$ takes the nonideal properties of ionic solutions, such as the steric repulsion, into consideration, which is an important model to describe electrorheological fluids containing charged solid balls or some other complex fluids in biological applications properly \cite{Liu2019}. The Keller-Segel system is another notable example of (\ref{sym1a})-(\ref{sym1c}), which is of great significance for its vast applications in biology, social sciences and astrophysics \cite{Alt1987, Keller1970, Hofer1995}. In the case, the kernel function $\mathcal W$ takes the Newtonian potential, and such a system describes the competition between the diffusion and the nonlocal aggregation. Albeit the unique solution structures for the scalar case, such the blowup criterion and diverse spatial patterns, the multi-species Keller-Segel system manifests enriched phenomenon \cite{Kurganov2014, He2019}.
	

	Although the system (\ref{sym1a})-(\ref{sym1c}) incorporates specific models with distinct solution properties, designing numerical methods for such a generic model of complex fluids should meet certain unified conditions. On one hand, proper numerical discretization should preserve the properties satisfied by the solutions to the continuous  model, such as positivity preserving, mass conservation and energy dissipation. As we shall demonstrate in the following, such properties are indeed shared by the models in the form of (\ref{sym1a})-(\ref{sym1c}). On the other hand, the numerical methods should involve accurate and efficient treatment on the convolution terms with singular but integrable kernels. The Coulomb potential, the steric repulsion potential and the Newtonian potential are all examples of such type, while naive numerical treatment of the associated convolutions may result in inconsistent approximations. In addition, the complexity of directly computing the convolution terms is $O(N^2)$ where $N$ denotes the number of spatial grid points, and thus a fast algorithm is desired to mitigate the computation burden. 
	
	
	There have been only a few known results in designing structure preserving schemes for general nonlocal drift-diffusion equations with smooth kernels, which in theory can be extended to the multi-species cases. 
	Carrillo, Chertock and Huang \cite{Carrillo2014} propose a finite volume scheme with positivity preserving and entropy decreasing properties when a CFL-type constraint is satisfied, where the issue of singular kernels is briefly mentioned but not explored. In \cite{Pareschi2018}, Pareschi and Zanella construct explicit and semi-implicit numerical schemes with the Chang-Cooper formulation, which are shown to capture the asymptotic steady states.
	
	
	It is also worth noting that in each specific model of (\ref{sym1a})-(\ref{sym1c}), one may find ample results in numerical methods with improved properties which are only valid for a small class of models or their equivalent forms. In particular, when the kernel function takes some special form, the convolution term can be replaced by a Poisson equation for the effective potential. 
	When the kernel $\mathcal W$ vanishes, the electrostatic potential
	$\Phi_{\mathcal{K}}(\boldsymbol{x}) = (\mathcal{K}*\rho)(\boldsymbol{x})$ related to concentrations of the ionic species can be determined by the Gauss's law, i.e.,
	\begin{equation}
	-\Delta \Phi_{\mathcal{K}}(\boldsymbol{x}) = \rho(\boldsymbol{x}),\quad \boldsymbol{x} \in \Omega,
	\end{equation} 
	with certain boundary conditions, and the system (\ref{sym1a})-(\ref{sym1c}) becomes the familiar PNP (Poisson-Nernst-Planck) system and there have been quite a few numeric studies on such simplified model. For instance, Liu and Wang \cite{Hai2014} have designed and analyzed a free energy satisfying finite difference method in a bounded domain 
	that are conservative, positivity preserving and of the first order in time and the second order in space under a parabolic CFL condition $\Delta t = O((\Delta x)^2)$. After that, an arbitrary-order discontinuous Galerkin version was given by them \cite{Hai2017}. Besides, a finite element method using a method of lines approached developed by Metti, Xu and Liu \cite{Metti2016} enforces the positivity of the computed solutions and obtains the discrete energy decay but works for the certain boundary while the scheme developed by Hu and Huang \cite{hu2019} works for the general boundaries. In addition, Liu and Maimaitiyiming propose a second order unconditional positivity preserving scheme \cite{Hai2019}. etc.  When the kernel $\mathcal W$ takes the Newtonian potential, the system (\ref{sym1a})-(\ref{sym1c}) becomes the Keller-Segel equations, and most of the numerical methods are designed for the augmented system, where the aggration potential function $\mathcal{W}*\theta$ solves an additional elliptic equation (or its parabolic counterpart) \cite{Filbet2006, Chertock2008, Kurganov2014, Liu2018, Almeida2019}. Besides, Liu, Wang, Zhao and Zhou adopt the finite volume scheme as in \cite{Carrillo2014} to approximate the general model (\ref{sym1a})-(\ref{sym1c}), which is of first-order in space, satisfies positivity preserving, mass conservation and energy dissipation properties and can also deal with the regularized singular kernel preliminarily, but also suffers from the parabolic-type CFL constraint \cite{Liu2019}. 
	
	
	The evaluation of the convolution-type nonlocal field usually bottlenecks the simulation in terms of both accuracy and efficiency, especially when the kernel is singular. Direct summation of the discrete convolution resulted from proper quadrature requires $O(N^2)$ costs (recall that $N$ is the number of uniform grid points), and it is not practical even for one dimension case, let alone the most interesting two and three dimensional Newtonian potential.  Therefore, it is imperative to design fast algorithms for such nonlocal field when the densities are given on uniform/nonuniform grid points. 
	In fact, there has been a lot of research focused on such problems, including the famous fast multipole method (FMM) \cite{Yarvin1999,Gimbutas2019}, which is  some challenging in coding though. Therefore, we would like to design simpler codes even with some efficiency sacrifice. In fact, the densities are usually given  on {\sl uniform }grids in our problem. Convolution with smooth kernels has been exploited in \cite{Qiang2010} based on Newton-Cotes quadrature. The key point therein lies in the fact that the resulted summation is of discrete convolution and it can be accelerated with discrete Fast Fourier Transform, see also \cite{vico2016jcp} as an another example. It will not be different for singular kernels as long as the singularity is properly treated.  
	It is worth mentioning a fast convolution algorithm on {\sl nonuniform} general grid \cite{Zhang2020} if one has to employ adaptive grid in simulation, for example, in the case of boundary layers.
	
	
	Our primary objective of this work is to propose a numerical scheme which satisfies the properties such as positivity preserving, mass conservation and energy dissipation, while the convolution of singular kernel should be computed efficiently and accurately. And it is certainly favored if the limitation of the CFL condition is removed or relieved. Also, we expect the scheme has the flexibility to handle the problems with various boundary conditions as well as the Cauchy problem. For the purpose, we consider a reformulation of the model (\ref{sym1a})-(\ref{sym1c}) in the symmetric form, which is common reformulation in relative entropy estimates of Fokker-Planck equations. Such an equivalent form facilitates the design of numerical methods which preserves various properties of the original model. 
	And it is worth mentioning constructing numerical schemes method based on such a reformulation has been proven successful for some specific examples of the system (\ref{sym1a})-(\ref{sym1c}) and a few other Fokker-Planck type models, e.g., \cite{Hai2014, Hai2019, Hu20192, Xu2016, Liu2018, Jin2011, Jin20112}. 
	
	
	The symmetric form of (\ref{sym1a})-(\ref{sym1c}) is given by
	\begin{align} 
	&\partial_t c_m(\boldsymbol{x}, t) = \nabla \cdot \left( \exp\{-\left( z_m \mathcal{K}*\rho + \mathcal{W}*\theta \right)\} \nabla \dfrac{c_m}{\exp\{-\left( z_m \mathcal{K}*\rho + \mathcal{W}*\theta \right)\}} \right), \label{model1a} \\
	&c_m(\boldsymbol{x},0) = c^0_{m}(\boldsymbol{x}), ~~m=1,\cdots,M. \label{model1c}
	\end{align}
	And with the symmetrization, the free energy can be rewritten as
	\begin{equation}
	\label{free1}
	\mathcal{F} = \int_{\Omega} \left\{ \sum_{m = 1}^M c_m \log \dfrac{c_m}{\exp\{ -\frac{1}{2} \{z_m (\mathcal{K}*\rho)(\boldsymbol{x}) + (\mathcal{W}*\theta)(\boldsymbol{x})\}\} } \right\} \,\mathrm{d} \boldsymbol{x}.
	\end{equation}
	It is well known that the free energy dissipates along the dynamics, which can be seen by directly taking a derivative of $\mathcal{F}$ with respect to $t$ and using the self-adjointness of the kernel $\mathcal{K}(\boldsymbol{x})$ and $\mathcal{W}(\boldsymbol{x})$
	\begin{align}
	\frac{\mathrm{d}}{\mathrm{d}t}\mathcal{F} &= \sum_{m = 1}^{M} \int_{\Omega} \left\{ \log \dfrac{c_m}{\exp\{ -\frac{1}{2} f_m \}} \dfrac{\partial }{\partial t} c_m + c_m \dfrac{\partial}{\partial t} \log \dfrac{c_m}{\exp\{ -\frac{1}{2} f_m \}} \right\} \,\mathrm{d} \boldsymbol{x} \nonumber\\
	&= - \sum_{m = 1}^{M} \int_{\Omega} \dfrac{\exp^2\{ - f_m \}}{c_m} \left| \nabla \dfrac{c_m}{\exp\{ - f_m \}}\right|^2 \,\mathrm{d} \boldsymbol{x} \leqslant 0, \nonumber
	\end{align}
	where we introduce the auxiliary convolution-type field 
	\begin{equation}
	\label{fm}
	f_m = f_m(\boldsymbol{x}, t) := z_m (\mathcal{K}*\rho)(\boldsymbol{x}) + (\mathcal{W}*\theta)(\boldsymbol{x}).
	\end{equation}
	If we denote the dissipation by
	\begin{align}
	\label{disspation}
	D = \sum_{m = 1}^{M} \int_{\Omega} \dfrac{\exp^2\{ - f_m \}}{c_m} \left| \nabla \dfrac{c_m}{\exp\{ - f_m \}}\right|^2 \,\mathrm{d} \boldsymbol{x},
	\end{align}
	which is clearly nonnegative, and hence we have derived the energy dissipation relation
	\begin{align}
	\frac{\mathrm{d}}{\mathrm{d}t} \mathcal{F} + D = 0.\label{free0}
	\end{align}

	In this work, we first construct a semi-discrete  finite volume scheme for the drift-diffusion equation in the symmetric form (\ref{model1a}) that is of second order accuracy in space. It not only satisfies positivity preserving and mass conservation properties, but also maintains the energy dissipation relation at the semi-discrete level. With the semi-implicit treatment, our scheme has shown improved stability performance with no CFL-type constraint. To numerically approximate the convolution terms with singular kernel functions,
	we propose a fast algorithm on the uniform grid with almost optimal accuracy and efficiency where the complexity in computing the convolution with singular kernels is reduced from $O(N^2)$ to $O\left(N \log (N)\right)$, and thus the numerical cost is tremendously lowered, especially in high dimensional cases. We have provided extensive numerical tests to verify the properties of the scheme, such as unconditional stability, numerical convergence, energy dissipation, accurate capture of the equilibrium states. In addition, plenty of specified numerical experiments, such as the finite-size effect of ionic fluids, the concentration of ions at the boundary, the blowup phenomenon of the Keller-Segel equations, are carried out with care, showing the strong promise for practical simulations of realistic scientific problems. 
	
	We would like to emphasize that, another strength of the numerical scheme is its remarkable capability to preserve additional properties of specific models. Because the symmetric form rather than the original form of the Fokker-Planck equations is frequently leveraged to investigate the unique solution structure of the models, such as strong stability estimates and relative entropy estimates, etc., it is shown that in a few cases that the numerical discretization of the symmetric form can inherit the similar properties on the discrete level, see e.g., \cite{Hu20192, Jin2011, Jin20112, Liu2018}. 
	
	
	The rest of the paper is organized as follows. In Section 2, we construct a finite volume scheme to the system (\ref{model1a})-(\ref{model1c}) in 1D in the semi-discrete level, prove its properties: positivity preserving, mass conservation and discrete free energy dissipation, and present the fast algorithm for computing the convolution-type field. In the same section, the fully-discrete scheme and the extension to the multi-dimensional cases are also discussed. In Section 3, we give the error estimates of the fast convolution algorithm to show the second order convergence. And it's observed that the error estimates hold no matter whether the kernel is smooth or singular. In Section 4 we verify the properties of our numerical method with numerous test examples, such as unconditional stability, numerical convergence, energy dissipation and the complexity in computing
	the convolution with singular kernels. Additionally, we provide series of numerical experiments to demonstrate the finite size effect and the concentration of ions at the boundary as well as the blowup phenomenon of the Keller-Segel equations. Concluding remarks and the expectations in the following research are given in Section 5.
	
	
	\section{Numerical Schemes}
	In this section, we propose a semi-implicit finite volume scheme for the multi-species model 
	(\ref{model1a})-(\ref{model1c}) in one-dimension and two-dimension which is of second order in space, first order in time. Furthermore, our scheme involves accurate and efficient fast algorithm on the convolution terms with singular but integrable kernels. And then we show that the scheme satisfies mass conversation, positivity preserving and entropy dissipation properties. Firstly, we describe the one-dimensional numerical scheme in the following.
	
	\subsection{The one-dimensional Case}
	We choose  the computational domain as $[-L, L]$ and a uniform mesh grid $\mathcal T =\{x_{j}\big|~~x_{j} = -L + (j + N) \Delta x, j = -N,
	\ldots, N, \Delta x= L/N\}$.  
	 A semi-discrete finite volume scheme reads as
	\begin{equation}
	\label{bsys2}
	\frac{\mathrm{d} \bar{c}_{m, j}(t)}{\mathrm{d} t} = - \frac{F_{m, j + \frac{1}{2}}(t) - F_{m, j - \frac{1}{2}}(t)}{\Delta x}, ~~m = 1, \cdots, M,
	\end{equation}
	where $\bar{c}_{m, j}(t)$ is the average concentration of the $m-$th ionic species on $[x_{j - \frac{1}{2}}, x_{j + \frac{1}{2}}]$ for $j = -N+1, \cdots, N - 1$  and $ [x_{-N}, x_{-N + \frac{1}{2}}]$ or $[x_{N - \frac{1}{2}}, x_{N}]$ for $j = -N$ or $N$. The numerical flux ${F}_{m, j + \frac{1}{2}}(t)$ is defined below
	\begin{equation}
	\label{bsys3}
	F_{m, j + \frac{1}{2}}(t) := -\frac{1}{\Delta x} \exp\{-f_{m, j + \frac{1}{2}}(t)\} \left\{ \frac{\bar{c}_{m, j + 1}(t)}{\exp\{-f_{m, j + 1}(t)\}} - \frac{\bar{c}_{m, j }(t)}{\exp\{-f_{m, j}(t)\}}\right\},
	\end{equation}
	where $\exp\{-f_{m, j + \frac{1}{2}}(t)\}$ takes the harmonic mean of $\exp\{-f_{m, j}(t)\}$ and $\exp\{-f_{m, j + 1}(t)\}$, i.e.,
	\begin{equation}
	\label{bsys5}
	\left\{\exp\{-f_{m, j + \frac{1}{2}}(t)\}\right\}^{-1} = \frac{1}{2} \left\{\left\{ \exp\{- f_{m, j}(t)\}\right\}^{-1} + \left\{\exp\{- f_{m, j + 1}(t)\} \right\}^{-1}\right\},
	\end{equation} with $f_{m,j}(t)$ denoting the numerical approximation of  $f_{m}(x_{j},t)$ at time $t$.
The harmonic mean (\ref{bsys5}) has been used in numerics \cite{Hu20192} , but it is not necessary, see \cite{Hai2014} for an alternative choice: the algebraic mean. 

	A fully-discrete finite volume scheme by applying the backward Euler method while treating the convolution-type field $f_{m, j + 1}$ explicitly in numerical flux term (\ref{bsys3}) reads as follows
	\begin{align}
	&\frac{\bar{c}_{m, j}^{n + 1} - \bar{c}_{m, j}^{n}}{\Delta t} = - \frac{{F}_{m, j + \frac{1}{2}}^{n + 1} - {F}_{m, j - \frac{1}{2}}^{n + 1}}{\Delta x},\label{sysodeb} \\
	&{F}_{m, j + \frac{1}{2}}^{n + 1} = -\frac{1}{\Delta x} \exp\{-f_{m, j + \frac{1}{2}}^{n }\} \left\{ \frac{\bar{c}_{m, j + 1}^{n + 1}}{\exp\{-f_{m, j + 1}^{n }\}} - \frac{\bar{c}_{m, j}^{n + 1}}{\exp\{-f_{m, j}^{n }\}}\right\},  \label{sysfluxb}
	\end{align}
	for all $m = 1, \ldots, M$ and $j = -N,\ldots, N$. The scheme will be complemented by a fast solver of the field $f_{m,j}$, on which we shall elaborate immediately in the coming subsection. We emphasize that the fully-discrete scheme (\ref{sysodeb})-(\ref{sysfluxb}) is only linearly implicit, and thus it avoids the use of nonlinear solvers. Besides, it satisfies the following structure preserving properties. 
		\begin{thm}\label{thm1b}(1D fully-discrete positivity preserving)
		Consider the one-dimensional fully-discrete finite volume scheme (\ref{sysodeb})-(\ref{sysfluxb}) of the system (\ref{model1a})-(\ref{model1c}) with initial data $\bar c_{m, j}^0 \geqslant 0, ~\forall ~ m = 1, \cdots, M$,  $j=-N,\ldots,N$, 
		then the cell averages $\bar{c}_{m, j}^n \geqslant 0$ for all species and grid points.
	\end{thm}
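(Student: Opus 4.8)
The plan is to establish positivity preserving by induction on the time level $n$, treating the map $\bar c^n_{m,\cdot}\mapsto \bar c^{n+1}_{m,\cdot}$ as the solution of a linear system and showing that the associated matrix is an $M$-matrix (equivalently, its inverse has nonnegative entries). Since the convolution-type fields $f^n_{m,j}$ are computed from the previous step and enter the scheme only explicitly, for each fixed species $m$ the update (\ref{sysodeb})--(\ref{sysfluxb}) decouples into a single linear system in the unknowns $(\bar c^{n+1}_{m,j})_{j=-N}^N$. Substituting the flux (\ref{sysfluxb}) into (\ref{sysodeb}), I would write this system in the form $A_m \bar c^{n+1}_{m,\cdot} = \bar c^{n}_{m,\cdot}$, where $A_m = I + \frac{\Delta t}{\Delta x^2} B_m$ and $B_m$ is the tridiagonal matrix coming from the discrete diffusion operator with the harmonic-mean weights.

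The key structural observation is that $B_m$ has nonnegative diagonal entries, nonpositive off-diagonal entries, and is weakly diagonally dominant with zero row sums away from the boundary (the no-flux boundary condition closes the system so that the boundary rows have nonnegative row sums); this is exactly because the weights $\exp\{-f^n_{m,j+\frac12}\}$ and the reciprocals $\exp\{f^n_{m,j}\}$ appearing in (\ref{sysfluxb}) are all strictly positive regardless of the sign of $f^n_{m,j}$, so the off-diagonal couplings $-\frac{1}{\Delta x^2}\exp\{-f^n_{m,j+\frac12}\}\exp\{f^n_{m,j+1}\}$ are genuinely $\le 0$. Consequently $A_m$ is a strictly diagonally dominant matrix (the added identity makes the dominance strict) with positive diagonal and nonpositive off-diagonal entries, hence a nonsingular $M$-matrix, so $A_m^{-1}\ge 0$ componentwise. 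Then $\bar c^{n+1}_{m,\cdot} = A_m^{-1}\bar c^n_{m,\cdot}\ge 0$ whenever $\bar c^n_{m,\cdot}\ge 0$, which closes the induction with base case given by the hypothesis $\bar c^0_{m,j}\ge 0$.

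I expect the main obstacle to be the careful bookkeeping at the boundary cells $j=\pm N$: the half-cells $[x_{-N},x_{-N+\frac12}]$ and $[x_{N-\frac12},x_N]$ have different volume, and the no-flux condition must be imposed by simply dropping the fluxes $F_{m,-N-\frac12}$ and $F_{m,N+\frac12}$, so one has to verify that the resulting first and last rows of $A_m$ still have positive diagonal, nonpositive off-diagonal, and row sum $\ge 1$ (in fact exactly $1$ plus a nonnegative diffusive contribution), preserving the $M$-matrix property. A secondary point worth stating explicitly is that all the exponential weights are well-defined and strictly positive because $f^n_{m,j}$ is a finite real number — this is what makes the argument independent of any CFL restriction on $\Delta t$, in contrast to explicit-flux schemes. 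Everything else is a routine verification that an irreducibly (or strictly) diagonally dominant $Z$-matrix with the stated sign pattern has a nonnegative inverse, which I would invoke as a standard fact.
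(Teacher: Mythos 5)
Your overall strategy --- writing the update as a linear system $A_m\,\bar c^{\,n+1}_{m,\cdot}=\bar c^{\,n}_{m,\cdot}$ and concluding from an $M$-matrix property that $A_m^{-1}\geqslant 0$ --- is viable and genuinely different from the paper's, but the structural claim you rest it on is false as stated. Write $E_j:=\exp\{-f^n_{m,j}\}$ and $a_{j+\frac12}:=\frac{\Delta t}{(\Delta x)^2}\exp\{-f^n_{m,j+\frac12}\}$. Row $j$ of $A_m$ has diagonal $1+(a_{j+\frac12}+a_{j-\frac12})/E_j$ and off-diagonal entries $-a_{j+\frac12}/E_{j+1}$ and $-a_{j-\frac12}/E_{j-1}$, so the row sum is $1+a_{j+\frac12}\bigl(1/E_j-1/E_{j+1}\bigr)+a_{j-\frac12}\bigl(1/E_j-1/E_{j-1}\bigr)$, which is not ``$1$ plus a nonnegative diffusive contribution'': the row sums of your $B_m$ are not zero, and row diagonal dominance can genuinely fail. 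Concretely, with the harmonic mean the $(j,j+1)$ entry equals $-\frac{\Delta t}{(\Delta x)^2}\cdot\frac{2E_j}{E_j+E_{j+1}}$, which tends to $-\frac{2\Delta t}{(\Delta x)^2}$ as $f^n_{m,j+1}\to+\infty$ while the matching diagonal contribution $\frac{\Delta t}{(\Delta x)^2}\cdot\frac{2E_{j+1}}{E_j+E_{j+1}}$ tends to $0$; for $2\Delta t>(\Delta x)^2$ the off-diagonal then exceeds the diagonal. What is true is that every \emph{column} of $A_m$ sums exactly to $1$ (the coefficient of $\bar c^{\,n+1}_{m,j}$ in rows $j\pm1$ is $-a_{j\pm\frac12}/E_j$, and this telescopes --- it is precisely discrete mass conservation, including at the no-flux boundary rows). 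Hence $A_m$ is a $Z$-matrix with positive diagonal that is strictly diagonally dominant \emph{by columns}; equivalently, the substitution $u_j=\bar c^{\,n+1}_{m,j}/E_j$ turns the system into $(\mathrm{diag}(E)+L)u=\bar c^{\,n}_{m,\cdot}$ with $L$ a symmetric weighted graph Laplacian. Either observation repairs the argument in one line ($A_m^T$ is a strictly row-dominant $Z$-matrix, hence an $M$-matrix, hence so is $A_m$), but the kernel direction of the ``Laplacian part'' is the Gibbs vector $(E_j)_j$, not the constant vector, so the row-sum claim must be corrected rather than merely re-justified.

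For comparison, the paper's proof (Appendix A.1) avoids linear algebra entirely: it argues by contradiction with a discrete minimum principle applied to the ratio $\bar c^{\,n+1}_{m,j}/\exp\{-f^n_{m,j}\}$ --- exactly the symmetrizing variable $u_j$ above. At the index where this ratio is minimal and negative, both bracketed differences in the update are $\leqslant 0$ and $\bar c^{\,n+1}_{m,j}<0$, so the left-hand side is negative while the right-hand side is $\geqslant 0$. Both routes give unconditional positivity with no CFL restriction, and your formulation additionally exhibits the unique solvability of the implicit step; but as written the diagonal-dominance step would fail and needs the row/column correction above.
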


	\begin{thm}\label{thm2b}(1D semi-discrete free energy dissipation estimate)
		Consider the one-dimensional semi-discrete finite volume scheme (\ref{bsys2})-(\ref{bsys4}) of the system (\ref{model1a})-(\ref{model1c}) with initial data $\bar c_{m, j}(0) > 0, \, m = 1, \cdots, M$. Assume we take no flux boundary conditions, i.e., the discrete boundary conditions satisfy $F_{m, -N - \frac{1}{2}} = F_{m, N+ \frac{1}{2}} = 0, \, m = 1, \cdots, M$.
		Then, for the semi-discrete form of the free energy $\mathcal{F}$ 
		and the disspation $D$, we have
		\begin{equation}
		\dfrac{\mathrm{d}}{\mathrm{d} t} E_{\Delta}(t) = - D_{\Delta}(t) \leqslant 0, \quad \forall \,t \geqslant 0.
		\end{equation}
	\end{thm}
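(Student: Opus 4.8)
The plan is to reproduce, at the semi-discrete level, the very computation that produced the continuous energy law $\frac{\mathrm{d}}{\mathrm{d}t}\mathcal{F}+D=0$: differentiate the energy in time, use the self-adjointness of $\mathcal K$ and $\mathcal W$ to regroup the convolution contributions, and then integrate by parts against the equation. The discrete substitutes for these three moves are the product rule for finite sums, a discrete self-adjointness of the convolution quadrature, and summation by parts together with the no-flux conditions; positivity of the cell averages $\bar c_{m,j}(t)$ is what keeps all the logarithms that appear along the way meaningful.

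First I would fix notation. With $\rho_j=\sum_{m}z_m\bar c_{m,j}$, $\theta_j=\sum_{m}\bar c_{m,j}$ the discrete densities, $f_{m,j}=z_m(\mathcal K*\rho)_j+(\mathcal W*\theta)_j$ the grid version of (\ref{fm}), and $(\mathcal K*\rho)_j$, $(\mathcal W*\theta)_j$ the convolution quadratures, the semi-discrete energy — the discretization of $\mathcal F$ in the symmetric form (\ref{free1}) — is, up to the halved weights of the two boundary cells, $E_\Delta(t)=\Delta x\sum_{j=-N}^{N}\sum_{m=1}^{M}\bar c_{m,j}\log\big(\bar c_{m,j}\exp\{\tfrac12 f_{m,j}\}\big)$, whose nonlocal part may equivalently be written as the symmetric quadratic forms $\tfrac{\Delta x}{2}\sum_j\rho_j(\mathcal K*\rho)_j$ and $\tfrac{\Delta x}{2}\sum_j\theta_j(\mathcal W*\theta)_j$. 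I will abbreviate $g_{m,j}(t):=\bar c_{m,j}(t)\exp\{f_{m,j}(t)\}$, so that $\log g_{m,j}=\log\bar c_{m,j}+f_{m,j}$ is the discrete analogue of $\mu_m-1$ and the flux (\ref{bsys3}) reads $F_{m,j+\frac12}=-\frac{1}{\Delta x}\exp\{-f_{m,j+\frac12}\}(g_{m,j+1}-g_{m,j})$. Since the initial data are strictly positive and the semi-discrete system is an ODE whose vector field points into the positive cone on its boundary — if $\bar c_{m,j}=0$ then $F_{m,j+\frac12}\le 0$ and $F_{m,j-\frac12}\ge 0$, hence $\frac{\mathrm{d}}{\mathrm{d}t}\bar c_{m,j}\ge 0$ — all $\bar c_{m,j}(t)$, and thus all $g_{m,j}(t)$, stay positive for $t\ge 0$.

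The main computation is three short steps. (i) Differentiate $E_\Delta$ and apply the product rule: this gives $\Delta x\sum_{j,m}\big(1+\log\bar c_{m,j}+\tfrac12 f_{m,j}\big)\dot{\bar c}_{m,j}+\tfrac{\Delta x}{2}\sum_{j,m}\bar c_{m,j}\dot f_{m,j}$. (ii) Recast the second sum: from the definitions of $f_{m,j}$, $\rho_j$, $\theta_j$ it equals $\sum_j\big(\rho_j\tfrac{\mathrm{d}}{\mathrm{d}t}(\mathcal K*\rho)_j+\theta_j\tfrac{\mathrm{d}}{\mathrm{d}t}(\mathcal W*\theta)_j\big)$, and since the discrete convolution matrices inherit the evenness of the kernels ($K_{jk}=K_{kj}$, $W_{jk}=W_{kj}$), a discrete self-adjointness turns this into $\sum_j\big((\mathcal K*\rho)_j\dot\rho_j+(\mathcal W*\theta)_j\dot\theta_j\big)=\sum_{j,m}f_{m,j}\dot{\bar c}_{m,j}$. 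Combining (i) and (ii) collapses everything to $\frac{\mathrm{d}}{\mathrm{d}t}E_\Delta=\Delta x\sum_{j,m}(1+\log g_{m,j})\dot{\bar c}_{m,j}$. (iii) Insert the scheme $\dot{\bar c}_{m,j}=-(F_{m,j+\frac12}-F_{m,j-\frac12})/\Delta x$ and sum by parts in $j$; the no-flux conditions $F_{m,-N-\frac12}=F_{m,N+\frac12}=0$ eliminate the boundary terms, and what remains is $\frac{\mathrm{d}}{\mathrm{d}t}E_\Delta=\sum_{j,m}(\log g_{m,j+1}-\log g_{m,j})F_{m,j+\frac12}=-\frac{1}{\Delta x}\sum_{j,m}\exp\{-f_{m,j+\frac12}\}(g_{m,j+1}-g_{m,j})(\log g_{m,j+1}-\log g_{m,j})$. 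Setting $D_\Delta(t)$ equal to this last sum, the elementary inequality $(a-b)(\log a-\log b)\ge 0$ for $a,b>0$ and the positivity of $\exp\{-f_{m,j+\frac12}\}$ (only positivity of the edge value is needed, not the specific harmonic mean (\ref{bsys5})) give $\frac{\mathrm{d}}{\mathrm{d}t}E_\Delta=-D_\Delta\le 0$; the logarithmic-mean identity $(g_{j+1}-g_j)(\log g_{j+1}-\log g_j)=(g_{j+1}-g_j)^2/g^{\mathrm{log}}_{j+\frac12}$ then identifies $D_\Delta$ with the natural Riemann-sum discretization of $D$ in (\ref{disspation}).

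The step I expect to be the crux is (ii): the whole telescoping depends on the discrete convolution operator being \emph{exactly} symmetric on the grid, so one must verify that the singularity treatment inside the fast convolution algorithm leaves the relations $K_{jk}=K_{kj}$, $W_{jk}=W_{kj}$ intact — this is precisely what lets the $\tfrac12 f_{m,j}$ built into $E_\Delta$ combine with the $\tfrac12 f_{m,j}$ produced by $\dot f_{m,j}$ and reconstitute the full $f_{m,j}$ inside $\log g_{m,j}$. By comparison, the summation-by-parts bookkeeping (including the two half-width boundary cells) and the convexity inequality are routine.
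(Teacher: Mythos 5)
Your proposal is correct and follows essentially the same route as the paper's own proof in Appendix B: product rule on $E_\Delta$, symmetry of the discrete convolution tensors ($T^{\mathcal K}_{j-i}=T^{\mathcal K}_{i-j}$, $T^{\mathcal W}_{j-i}=T^{\mathcal W}_{i-j}$) to merge the two half-$f_{m,j}$ contributions, summation by parts with the no-flux conditions, and the monotonicity of the logarithm (your $(a-b)(\log a-\log b)\geqslant 0$ is the same content as the paper's mean-value-theorem factor $1/\beta_{m,j}$). The only additions beyond the paper's argument are your explicit semi-discrete positivity-propagation remark and the logarithmic-mean identification of $D_\Delta$, both of which are consistent with, and implicit in, the stated theorem.
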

Here $E_{\Delta}(t)$ and $D_{\Delta}(t)$, 
semi-discrete free energy
	and dissipation, are defined explicitly as follows
	\begin{equation}
	\label{disenergyb}
	\begin{aligned}
	E_{\Delta}(t) = \Delta x \sum_{m = 1}^M \sum_{j = -N_x}^{N_x} \bar{c}_{m, j}(t) \log  \frac{\bar{c}_{m, j}(t)}{\exp\{- \frac{1}{2} f_{m, j}(t)\}}, 
	\end{aligned}
	\end{equation}
	\begin{equation}
	\begin{aligned}
	D_{\Delta}(t) &= \frac{1}{\Delta x} \sum_{m = 1}^{M} \sum_{j = -N_x}^{N_x} \exp\{-f_{m, j + \frac{1}{2}}(t)\} \cdot \frac{1}{\beta_{m, j}(t)} \left\{ \frac{\bar{c}_{m, j + 1}(t)}{\exp\{-f_{m, j + 1}(t)\}} - \frac{\bar{c}_{m, j }(t)}{\exp\{- f_{m, j}(t)\}}\right\}^2,
	\end{aligned}
	\end{equation}
	where $\beta_{m, j}(t)$ sits between $\frac{\bar{c}_{m, j }(t)}{\exp\{-f_{m, j}(t)\}}$ and $\frac{\bar{c}_{m, j + 1}(t)}{\exp\{- f_{m, j + 1}(t)\}}$.

\begin{rem}
The proof of Theorem \ref{thm1b} is similar to that in \cite{Hu20192} although the numerical scheme is different,  thus we choose to put the proof in Appendix \ref{app1}.
	Similarly, the proof of Theorem \ref{thm2b} is similar to that in \cite{Liu2019} and we present its proof in Appendix \ref{app2}.
	\end{rem}

\begin{rem}
	
When considering the PNP system for simulation of ionic channels instead of the field model (\ref{sym1a})-(\ref{sym1c}) in our paper, Liu and Maimaitiyiming have given 1D fully-discrete free energy dissipation estimate in \cite{Hai2019}. The main difference in our method is that we involve accurate and efficient fast algorithm on the convolution terms with singular but integrable kernels.
	\end{rem}

\subsection{Fast and accurate evaluation of the convolution-type field}

For practical simulation reasons,  we aim to design  a \textsl{fast} and \textsl{accurate} solve for the field \eqref{fm} evaluation on such uniform grid $\mathcal T$.	
The field $f_{m}(x,t)$ is given as convolutions and its numerical approximation $f_{m,j}(t)$ can be computed as follows
	\begin{equation}
	\label{bsys4}
f_{m,j}(t)= \int_{-L}^{L}  \left( z_m \mathcal{K} \left(x_{j}-x\right) {\rho}^h(x, t) + \mathcal{W}\left(x_{j}-x\right) {\theta}^h(x, t) \right) \,\mathrm{d} x,
	\end{equation}
where
	\begin{equation}
	\rho^h(x, t) = \sum_{m = 1}^M z_m {c}_{m}^h(x, t), \quad \quad 
	\theta^h(x, t) = \sum_{m = 1}^M {c}_{m}^h(x, t).
	\end{equation}
	Here, ${c}_{m}^h(x, t)$ is chosen as the piecewise linear interpolation of $c_{m}(x)$ using $\bar c_{m,j}$, and  is given explicitly 
	\begin{equation}\label{chFun}
	{c}_{m}^h(x, t) = \sum\nolimits_{j = -N}^{N} \bar c_{m, j}(t) e_j(x),  \quad ~\forall~x \in [-L, L],
	\end{equation}
	with $e_j$ being the piecewise linear interpolation function, i.e., the typical hat function. Obviously, $c_{m}^{h}$ is a second order approximation of $c_{m}$. Equivalently, $f_{m,j}(t)$ can be rewritten as a summation of convolutions of kernels $\mathcal{K}(x), \mathcal{W}(x)$ with  density $\rho^{h}(x), \theta^{h}(x)$ on grid $\mathcal T$ as follows
\begin{equation}
	\label{f1}
	f_{m, j}(t) =  \sum_{p = 1}^M   z_m z_p\int_{-L}^L  \mathcal{K} \left(x_{j}-x\right) {c}_{p}^h(x, t) \,\mathrm{d} x +  \sum_{p = 1}^M \int_{-L}^L\mathcal{W}\left(x_{j}-x\right) {c}_{p}^h(x, t) \,\mathrm{d} x.
	\end{equation}

Plugging ${c}^h_m$ (\ref{chFun}) into the above equation, we have 
		\begin{eqnarray}
		\nonumber
		f_{m, j}(t)  &= &\sum_{p = 1}^M   z_m z_p \sum_{i=-N}^{N}  \bar c_{p, i}(t) \int_{-L}^{L} \mathcal{K} (x_{j}-x)  e_i(x) \,\mathrm{d} x \\
		\label{f2}& +&\sum_{p = 1}^M   ~ \sum_{i=-N}^{N}  \bar c_{p, i}(t) \int_{-L}^{L} \mathcal{W} (x_{j}-x)  e_i(x) \,\mathrm{d} x.
			\end{eqnarray}	
		Direct computation of \eqref{f1} by simply collecting all terms in \eqref{f2} leads to a simple but inefficient field solver,  
	because the corresponding complexity $O(N^2)$ shall bottleneck the simulation for large $N$. Therefore, for practical simulations, it is imperative to design
	an efficient field solver of a smaller complexity and sufficient spatial accuracy inherited from the density function.

	\
	
		Because the basis function $e_j(x)$ has local compact support, the whole interval integrals in \eqref{f2} can be reduced to local integrals. 
		Take the interior point $x_{j}$ for example, the integral with kernel $\mathcal K$ is reduced to 
		\begin{equation}\label{Tensor}
		T_{j,i}:=\int_{-L}^L \mathcal K(x_j-x) e_i(x) {\rm d} x = \Delta x \int_{-1}^1 {\mathcal K}(\,(j-i-x)\Delta x\,)  ~\widehat e_0(x) {\rm d} x,
		\end{equation}
		 after a change of variables $x = x_i + \tilde x \Delta x $ and removing unnecessary $\tilde \ $. 
		 Here, $T_{j,i}$ denotes the convolution matrix element, and $\widehat e_0(x)$ is the hat function over interval $[-1,1]$, i.e., $\widehat e_0(x) = 1-x,  \mbox{ for } x  \in [0,1] ; 1+x,  \mbox{ for }  x\in [-1,0]$.
		 From \eqref{Tensor}, it is clear that the double-index matrix element $T_{j,i}$ depends only on the index difference $j-i$, therefore, it naturally reduces to a single-index element $T_{j-i}:=T_{j,i}$.
		 It is important to address that such index reduction only holds for \textsl{uniform } mesh grid. Similar argument also applies to integral of boundary points, and we shall omit details for brevity.
		 
		 \

		 For fixed uniform grid, after suitable change of variables, the nonlocal field can be reformulated as 
	\begin{equation}
	\label{ffinal}
	f_{m, j}(t) =  \sum_{p = 1}^M z_m  z_p  \left[ \sum_{i=-N+1}^{N-1} \bar c_{p, i}(t)  T^{\mathcal{K}}_{j-i} \right] 
	+  \sum_{p = 1}^M \left[ \sum_{i=-N+1}^{N-1} \bar c_{p, i}(t)  T^{\mathcal{W}}_{j-i} \right ]+ H_{m, j}^{+} + H_{m, j}^{-},
	\end{equation}
	for all $j = -N,\ldots, N$. Here, the convolution tensor are defined as $$T^{U}_{j-i} := \Delta x \int_{-1}^{1} U\left((j - i-x) \Delta x  \right)  \widehat{e}_0(x) \,\mathrm{d} x, \quad U = \mathcal K \mbox{ or } \mathcal W,$$
	and the boundary contribution $H_{m,j}^{\pm }$ are explicitly given 
	\begin{eqnarray*}	
	H_{m, j}^{\pm} & = &  \sum_{p = 1}^M  z_m z_p \bar c_{p, \pm N} \, \Delta x \!\int_{0}^{1} \mathcal{K} \left((j \mp  (N - x)) \Delta x \right)  \widehat{e}_0^{+}(x) \,\mathrm{d} x \\
&+ &  \sum_{p = 1}^M \bar c_{p, \pm N}\,\Delta x \int_{0}^{1} \mathcal{W} \left((j \mp ( N - x) ) \Delta x \right)  \widehat{e}_0^{+}(x) \,\mathrm{d} x, 
	\end{eqnarray*}
with $\widehat{e}_0^{+}(x) := 1-x$. 
It is clear that, in \eqref{ffinal},
the first two inner summations over index $i$ are discrete convolutions,  and the last two terms can be directly computed within $O(M N)$ operations for all $m$ and $j$. As is well known in \cite{vico2016jcp,Zhang2020,Qiang2010}, 
discrete convolutions can be efficiently accelerated via the discrete Fast Fourier Transform (FFT)  on a doubly zero-padded discrete density, therefore, the first two summations can be computed within $O(M N\log N)$ operations. In summary, the total computational cost for \eqref{ffinal} is $O(M N + M N \log N)$.  It is worth noting that the tensors and integrals involved depend on only the mesh size $\Delta x$ and index difference, then for a given mesh grid, these terms can be precomputed.
The pre-computation cost is $O(N)$ thanks to the single-index property, and it can be further reduced by half if the kernel is symmetric, i.e., $\mathcal K(x) = \mathcal K(-x)$.
\begin{rem}
The spatial accuracy (second order) inherits from the finite-volume discretization of the density. It does not matter whether the kernel is regular or singular, the second order convergence can be easily proved as long as the kernel is absolutely integrable. Therefore, such fast algorithm allows us to treat generic models which includes the well-known Newtonian.
Detailed error estimates for the 1D and 2D field evaluations are presented in Section \ref{Sec_Error}.
\end{rem}

\begin{rem}
The above fast algorithm can be extended easily, a little tedious though, to multi-dimension problems. As long as the discrete tensors are precomputed,  the $d$-dimension convolution can be evaluated efficiently with the FFT similarly.  Special care should be taken of the boundary points in higher dimensions problem. The 2D case is detailed in the coming subsection.
\end{rem}

	\subsection{The multi-dimensional Case}
	It is natural to extend our scheme to the multi-dimensional problems. In this part, we take the two-dimensional case as an example. In this case, we choose the computational domain as $[-L_x, L_x] \times [-L_y, L_y]$ and take a uniform mesh grid $\mathcal T =\{\left( x_{j}, y_{k} \right) \big|~~x_{j} = -L_x + (j + N_x) \Delta x, j = -N_x,
	\ldots, N_x, \Delta x= L_x/N_x, y_{k} = -L_y + (k + N_y) \Delta y, k = -N_y,
	\ldots, N_y, \Delta y= L_y/N_y\}$.  
	A semi-discrete finite volume scheme reads as for $m = 1, \cdots, M,$
	\begin{equation}
	\label{bsys22d}
	\frac{\mathrm{d} \bar{c}_{m, j, k}(t)}{\mathrm{d} t} = - \frac{{F}^x_{m, j + \frac{1}{2}, k}(t) - {F}^x_{m, j - \frac{1}{2}, k}(t)}{\Delta x} - \frac{{F}^y_{m, j, k + \frac{1}{2}}(t) - {F}^y_{m, j, k - \frac{1}{2}}(t)}{\Delta y}, 
	\end{equation}
	where $\bar{c}_{m, j, k}$ is the average concentration of the $m-$th ionic species on $[x_{j - \frac{1}{2}}, x_{j + \frac{1}{2}}] \times [y_{k - \frac{1}{2}}, y_{k + \frac{1}{2}}]$ for $j = -N_x + 1, \cdots, N_x - 1, k = -N_y+1, \cdots, N_y - 1,$ or on boundary cell for $j = \pm N_x$ or $k = \pm N_y$. 
	Similarly, the numerical flux ${F}^x_{m, j + \frac{1}{2}, k}(t)$ and ${F}^y_{m, j, k + \frac{1}{2}}(t)$ are defined in the following forms respectively:
	\begin{align}
	&{F}^x_{m, j + \frac{1}{2}, k}(t) = -\frac{1}{\Delta x} \exp\{-f_{m, j + \frac{1}{2}, k}(t)\} \left\{ \frac{\bar{c}_{m, j + 1, k}(t)}{\exp\{-f_{m, j + 1, k}(t)\}} - \frac{\bar{c}_{m, j, k}(t)}{\exp\{-f_{m, j, k}(t)\}}\right\}, \label{bsys32dx} \\
	&{F}^y_{m, j, k + \frac{1}{2}}(t) = -\frac{1}{\Delta y} \exp\{-f_{m, j, k + \frac{1}{2}}(t)\} \left\{ \frac{\bar{c}_{m, j, k + 1}(t)}{\exp\{-f_{m, j, k + 1}(t)\}} - \frac{\bar{c}_{m, j, k}(t)}{\exp\{-f_{m, j, k}(t)\}}\right\}. \label{bsys32dy}
	\end{align}
	And $\exp\{-f_{m, j + \frac{1}{2}, k}(t)\}$ and $\exp\{-f_{m, j, k + \frac{1}{2}}(t)\}$ take the harmonic mean similarly:
	\begin{align}
	\label{bsys52dx}
	\left\{\exp\{-f_{m, j + \frac{1}{2}, k}(t)\}\right\}^{-1} = \left\{\frac{1}{2} \left\{ \exp\{-f_{m, j, k}(t)\}\right\}^{-1} + \left\{\exp\{-f_{m, j + 1, k}(t)\} \right\}^{-1}\right\}, \\
	\label{bsys52dy}
	\left\{\exp\{-f_{m, j, k + \frac{1}{2}}(t)\}\right\}^{-1} = \left\{\frac{1}{2} \left\{ \exp\{-f_{m, j, k}(t)\}\right\}^{-1} + \left\{\exp\{-f_{m, j, k + 1}(t)\} \right\}^{-1}\right\},
	\end{align}
	where $f_{m,j, k}(t)$ denotes the numerical approximation of  $f_{m}(x_{j},y_{k}, t)$ at time $t$.
	
	In this paper, a fully-discrete finite volume scheme of the ODEs system (\ref{bsys22d}) is proposed by applying the backward Euler method while treating the convolution-type field $f_{m, j \pm 1, k}$ and $f_{m, j , k \pm 1}$ in the numerical flux (\ref{bsys32dx}) and (\ref{bsys32dy}) explicitly for all $j= -N_x,\ldots, N_x$, $k = -N_y,\ldots, N_y$ and $m = 1, \ldots, M$, which is
	\begin{align}
	&\frac{\bar{c}_{m, j, k}^{n + 1} - \bar{c}_{m, j, k}^{n}}{\Delta t} = - \frac{F_{m, j + \frac{1}{2}, k}^{n + 1} - F_{m, j - \frac{1}{2}, k}^{n + 1}}{\Delta x} - \frac{F_{m, j, k + \frac{1}{2}}^{n + 1} - F_{m, j, k - \frac{1}{2}}^{n + 1}}{\Delta y}, \label{ful1}\\
	&F_{m, j + \frac{1}{2}, k}^{n + 1} = -\frac{1}{\Delta x} \exp\{-f_{m, j + \frac{1}{2}, k}^{n }\} \left\{ \frac{\bar{c}_{m, j + 1, k}^{n + 1}}{\exp\{-f_{m, j + 1, k}^{n }\}} - \frac{\bar{c}_{m, j, k}^{n + 1}}{\exp\{-f_{m, j, k}^{n }\}}\right\}, \label{ful2}\\
	&F_{m, j, k + \frac{1}{2}}^{n + 1} = -\frac{1}{\Delta y} \exp\{-f_{m, j, k + \frac{1}{2}}^{n }\} \left\{ \frac{\bar{c}_{m, j , k + 1}^{n + 1}}{\exp\{-f_{m, j, k + 1}^{n }\}} - \frac{\bar{c}_{m, j, k}^{n + 1}}{\exp\{-f_{m, j, k}^{n }\}}\right\}. \label{ful3}
	\end{align}
	
	Clearly, the fully-discrete scheme (\ref{ful1})-(\ref{ful3}) is again only linearly implicit, and thus it avoids the use of nonlinear solvers. The fully-discrete positivity preserving and the semi-discrete entropy dissipation properties for two-dimensional case are given as follows. All the proofs for two-dimensional case are similar to one-dimensional case, so we omit them in the following.
	
	\begin{thm}\label{thm1c}(2D fully-discrete positivity preserving)
		Consider the two-dimensional fully-discrete finite volume scheme (\ref{ful1})-(\ref{ful3}) of the system (\ref{model1a})-(\ref{model1c}) with initial data $\bar c_{m, j, k}^0 \geqslant 0, ~\forall ~ m = 1, \cdots, M$, $\forall ~ j = -N_x, \cdots, N_x, ~k = -N_y, \cdots, N_y$. 
		Then, the cell averages $\bar{c}_{m, j, k}^n \geqslant 0, ~\forall ~ m = 1, \cdots, M, \,\forall ~ j, k, n$.
	\end{thm}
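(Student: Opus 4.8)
The plan is to argue by induction on the time level $n$. At each step it suffices to establish two things: that the scheme (\ref{ful1})--(\ref{ful3}) determines $\bar c^{\,n+1}_{m,j,k}$ uniquely from $\bar c^{\,n}_{m,j,k}$, and that this linear map sends a nonnegative state to a nonnegative state. The base case $n=0$ is the hypothesis on the initial data. For the inductive step I would first note that once $\bar c^{\,n}_{m,j,k}\geqslant0$ for all $m,j,k$, the two-dimensional analogue of the interpolant (\ref{chFun}) is bounded, so the convolution fields $f^{\,n}_{m,j,k}$ in (the 2D version of) (\ref{ffinal}) are finite because $\mathcal K$ and $\mathcal W$ are absolutely integrable on the bounded computational domain; hence the nodal weights $g^{\,n}_{m,j,k}:=\exp\{-f^{\,n}_{m,j,k}\}$ and the interface harmonic means of (\ref{bsys52dx})--(\ref{bsys52dy}) are all strictly positive and finite. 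This is the only place the (possibly singular) kernels enter, and only through integrability.

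The key step is the discrete symmetrizing change of unknowns already used in 1D: for each fixed $m$ set $d_{j,k}:=\bar c^{\,n+1}_{m,j,k}/g^{\,n}_{m,j,k}$. Substituting (\ref{ful2})--(\ref{ful3}) into (\ref{ful1}) and multiplying by $\Delta t$, the scheme becomes the linear system
\begin{multline*}
g^{\,n}_{m,j,k}\,d_{j,k}-\frac{\Delta t}{\Delta x^{2}}\Bigl[g^{\,n}_{m,j+\frac12,k}(d_{j+1,k}-d_{j,k})-g^{\,n}_{m,j-\frac12,k}(d_{j,k}-d_{j-1,k})\Bigr]\\
-\frac{\Delta t}{\Delta y^{2}}\Bigl[g^{\,n}_{m,j,k+\frac12}(d_{j,k+1}-d_{j,k})-g^{\,n}_{m,j,k-\frac12}(d_{j,k}-d_{j,k-1})\Bigr]=\bar c^{\,n}_{m,j,k},
\end{multline*}
i.e.\ $A^{\,n}_{m}\mathbf d=\bar{\mathbf c}^{\,n}_{m}$, where $A^{\,n}_{m}$ has strictly positive diagonal and nonpositive off-diagonal entries. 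Collecting the contribution of each interface flux to the two equations it enters shows that the diffusive part of each diagonal entry equals the sum of the moduli of the off-diagonal entries in its column, so $A^{\,n}_{m}$ is strictly diagonally dominant by columns, the excess in each column being exactly the positive quantity $g^{\,n}_{m,j,k}$ contributed by the backward-Euler term; hence $A^{\,n}_{m}$ is a nonsingular $M$-matrix and $\bar c^{\,n+1}_{m,j,k}$ is well defined. It is worth stressing that this strict dominance holds for every $\Delta t>0$, which is precisely what makes the positivity unconditional (no CFL constraint).

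For nonnegativity I would invoke a discrete minimum principle on $d$. Let $(j^{*},k^{*})$ attain $\min_{j,k}d_{j,k}$; then every difference $d_{\nu}-d_{j^{*},k^{*}}$ over the neighbours $\nu$ of $(j^{*},k^{*})$ is $\geqslant0$, so both bracketed expressions in the display, evaluated at $(j^{*},k^{*})$, are $\geqslant0$. Rearranging the equation at that cell gives $g^{\,n}_{m,j^{*},k^{*}}d_{j^{*},k^{*}}\geqslant\bar c^{\,n}_{m,j^{*},k^{*}}\geqslant0$, and since $g^{\,n}_{m,j^{*},k^{*}}>0$ this forces $d_{j^{*},k^{*}}\geqslant0$; therefore $d_{j,k}\geqslant0$ for all $j,k$ and $\bar c^{\,n+1}_{m,j,k}=g^{\,n}_{m,j,k}d_{j,k}\geqslant0$. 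At a boundary cell the no-flux condition simply deletes the interface term pointing outside $\Omega$ (and the corresponding off-diagonal entry), so the matrix structure and the minimum-principle inequality are unchanged, and the argument goes through verbatim. The induction then closes.

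The step I expect to require the most care is the boundary bookkeeping: checking that, for the half-width boundary cells and under the no-flux condition in (\ref{ful1})--(\ref{ful3}), the column diagonal dominance of $A^{\,n}_{m}$ and the minimum-principle inequality still hold; everything in the interior is a direct transcription of the one-dimensional proof (Appendix \ref{app1}, following \cite{Hu20192}). A minor technical point is to confirm finiteness of $f^{\,n}_{m,j,k}$ when $\mathcal K$ or $\mathcal W$ is singular, which is immediate from local integrability since the density interpolant is bounded.
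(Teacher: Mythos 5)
Your proof is correct and follows essentially the same route as the paper: the paper proves the 1D case (Appendix \ref{app1}) by evaluating the scheme at the minimizer of the symmetrized variable $\bar c^{\,n+1}_{m,j}/\exp\{-f^{n}_{m,j}\}$ (phrased as a contradiction) and declares the 2D proof identical, which is exactly your discrete minimum principle in the variable $d_{j,k}$ stated directly. The only genuine addition is your M-matrix/column-dominance argument for unique solvability of the linear system, which the paper leaves implicit but which is a worthwhile supplement.
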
 
	
	\begin{thm}\label{thm2c}(2D semi-discrete free energy dissipation estimate)
		Consider the two-dimensional semi-discrete finite volume scheme (\ref{bsys22d})-(\ref{bsys32dy}) of the system (\ref{model1a})-(\ref{model1c}) with initial data $\bar c_{m, j, k}(0) > 0, \, m = 1, \cdots, M$. Assume we take no flux boundary conditions on $[-L_x, L_x]\times [-L_y, L_y]$, i.e., the discrete boundary conditions satisfy $F_{m, -N_x - \frac{1}{2}, k} = F_{m, N_x + \frac{1}{2}, k} = F_{m, j, -N_y - \frac{1}{2}} = F_{m, j, N_y + \frac{1}{2}} = 0, \, m = 1, \cdots, M$. 
		Then we have
		\begin{equation*}
		\dfrac{\mathrm{d}}{\mathrm{d} t} E_{\Delta}(t) 
		\leqslant 0.
		\end{equation*}
	\end{thm}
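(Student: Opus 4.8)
The plan is to mirror the one-dimensional argument of Theorem~\ref{thm2b}, whose two-dimensional counterpart follows \cite{Liu2019}; the only new feature is the presence of a second spatial direction. First I would differentiate $E_\Delta(t)$ in time. Using $\log\dfrac{\bar c_{m,j,k}}{\exp\{-\frac{1}{2} f_{m,j,k}\}}=\log \bar c_{m,j,k}+\tfrac{1}{2} f_{m,j,k}$ in \eqref{disenergyb}, the chain rule gives $\bar c_{m,j,k}\dfrac{\mathrm{d}}{\mathrm{d}t}\log\dfrac{\bar c_{m,j,k}}{\exp\{-\frac{1}{2} f_{m,j,k}\}}=\dot{\bar c}_{m,j,k}+\tfrac{1}{2}\bar c_{m,j,k}\dot f_{m,j,k}$, so that
\begin{equation*}
\dfrac{\mathrm{d}}{\mathrm{d}t}E_\Delta=\Delta x\Delta y\sum_{m=1}^M\sum_{j,k}\Big[\dot{\bar c}_{m,j,k}\Big(1+\log \bar c_{m,j,k}+\tfrac{1}{2} f_{m,j,k}\Big)+\tfrac{1}{2}\bar c_{m,j,k}\dot f_{m,j,k}\Big].
\end{equation*}
Summing the flux-difference form \eqref{bsys22d} over $(j,k)$ and using the no-flux conditions yields the discrete mass conservation $\sum_{j,k}\dot{\bar c}_{m,j,k}=0$, which disposes of the constant $1$.

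Next I would eliminate the interaction contribution by the self-adjointness of the discrete field operators $\bar c\mapsto f_m[\bar c]$, the discrete analogue of the self-adjointness of $\mathcal K$ and $\mathcal W$ exploited in \eqref{free0}. Since each $f_{m,j,k}$ depends linearly on $\{\bar c_p\}$ through discrete convolutions with tensors that are even in the index difference --- this is exactly where the evenness $\mathcal K(-\boldsymbol{x})=\mathcal K(\boldsymbol{x})$, $\mathcal W(-\boldsymbol{x})=\mathcal W(\boldsymbol{x})$ is used --- one obtains $\sum_{m,j,k}\bar c_{m,j,k}\dot f_{m,j,k}=\sum_{m,j,k}\dot{\bar c}_{m,j,k}f_{m,j,k}$, so the two $f$-terms merge and
\begin{equation*}
\dfrac{\mathrm{d}}{\mathrm{d}t}E_\Delta=\Delta x\Delta y\sum_{m=1}^M\sum_{j,k}\dot{\bar c}_{m,j,k}\,\log\dfrac{\bar c_{m,j,k}}{\exp\{-f_{m,j,k}\}}.
\end{equation*}

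Finally I would insert the scheme \eqref{bsys22d}--\eqref{bsys32dy}, write $u_{m,j,k}:=\bar c_{m,j,k}/\exp\{-f_{m,j,k}\}$, and perform discrete summation by parts in the $j$- and $k$-directions separately; the boundary contributions vanish since $F_{m,-N_x-\frac{1}{2},k}=F_{m,N_x+\frac{1}{2},k}=F_{m,j,-N_y-\frac{1}{2}}=F_{m,j,N_y+\frac{1}{2}}=0$. This leaves
\begin{equation*}
\dfrac{\mathrm{d}}{\mathrm{d}t}E_\Delta=-\dfrac{\Delta y}{\Delta x}\sum_{m,j,k}\exp\{-f_{m,j+\frac{1}{2},k}\}\big(u_{m,j+1,k}-u_{m,j,k}\big)\big(\log u_{m,j+1,k}-\log u_{m,j,k}\big)-\big(\text{analogous }k\text{-sum}\big).
\end{equation*}
Because the cell averages stay strictly positive for all $t\geqslant0$ --- propagated by the same monotone-structure argument as in the positivity-preserving result --- each $u_{m,j,k}>0$, the elementary inequality $(a-b)(\log a-\log b)\geqslant0$ applies termwise, and the mean value theorem applied to $\log$ produces positive denominators $\beta^x_{m,j,k},\beta^y_{m,j,k}$ lying between consecutive values of $u$. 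The right-hand side is then $-D_\Delta(t)$ with $D_\Delta(t)\geqslant0$ the evident two-directional analogue of the one-dimensional dissipation, which gives $\dfrac{\mathrm{d}}{\mathrm{d}t}E_\Delta=-D_\Delta\leqslant0$.

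The differentiation and the summation-by-parts steps are routine (chain rule, Abel summation, convexity of $t\log t$). The step I expect to demand the most care is the discrete self-adjointness invoked above: one must verify that the chosen discretization of $f_{m,j,k}$ really yields a symmetric operator on the grid, which, beyond the evenness of the kernels, in two dimensions requires a careful accounting of the half-width boundary cells along the edges and at the corners of $[-L_x,L_x]\times[-L_y,L_y]$, together with the bookkeeping needed to keep the cell averages strictly positive so that all logarithms and the ratios $\beta^x_{m,j,k},\beta^y_{m,j,k}$ are well defined. Both are handled exactly as in the one-dimensional proof of Theorem~\ref{thm2b}.
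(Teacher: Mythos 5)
Your proposal is correct and follows essentially the same route as the paper, which proves the one-dimensional case (Appendix B) and states that the two-dimensional proof is analogous: differentiate $E_\Delta$, use the symmetry $T_{j-i}=T_{i-j}$ of the discrete convolution tensors to merge the interaction terms into $\sum\dot{\bar c}\,f$, insert the flux form, sum by parts with the no-flux boundary conditions, and conclude via the monotonicity of $\log$ (the mean value theorem yielding the $\beta$ denominators). The only cosmetic difference is that you remove the constant $1$ via discrete mass conservation, whereas the paper keeps $\mu_{m,j}=\log(\bar c_{m,j}/\exp\{-f_{m,j}\})+1$ and lets the constant cancel in the Abel summation.
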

	Here for the two-dimensional case, the semi-discrete free energy with respect to $\mathcal{F}$ is defined in the following form
	\begin{equation}
	\begin{aligned}
	E_{\Delta}(t) = \Delta x \Delta y \sum_{m = 1}^M \sum_{j = -N_x}^{N_x} \sum_{k = -N_y}^{N_y} \bar{c}_{m, j,k}(t) \log  \frac{\bar{c}_{m, j,k}(t)}{\exp\{- \frac{1}{2} f_{m, j,k}(t)\}}.
	\end{aligned}
	\end{equation}

	\begin{rem}		
	The two-dimensional scheme will also be complemented by a fast solver of the field $f_{m,j, k}$ which is a natural extension of the one-dimensional case. Thus, we briefly describe the fast and accurate evaluation of the convolution-type field in two-dimension in the following. 
	\end{rem}
	
	The two-dimensional numerical approximation convolution $f_{m, j, k}(t)$ on grid $\mathcal T$ can be calculated by using the bilinear approximation ${c}^h_m$ instead of $c_m$:
	\begin{equation}
	\label{bsys42d}
	f_{m, j, k}(t) = \int_{-L_y}^{L_y} \int_{-L_x}^{L_x} \left( z_m \mathcal{K} \left(x_{j}-x, y_{k}-y\right) {\rho}^h(x, y, t) + \mathcal{W}\left(x_{j}-x, y_k - y \right) {\theta}^h(x, y, t) \right) \,\mathrm{d} x  \,\mathrm{d} y,
	\end{equation}
	where
	\begin{align}
	\rho^h(x, y, t) &= \sum_{m = 1}^M z_m {c}^h_{m}(x, y, t), \\
	\theta^h(x, y, t) &= \sum_{m = 1}^M {c}^h_{m}(x, y, t).
	\end{align}
	The bilinear approximation ${c}^h_m(x, y, t)$ of the concentration $c_m$ is given explicitly in the following,
	\begin{equation}
	\label{tildec_2d}
	{c}^h_{m}(x, y, t) = \sum_{j=-N_x}^{N_x} \sum_{k=-N_y}^{N_y} \bar c_{m, j, k}(t) e_{j, k}(x, y), 
	\end{equation}
	with $e_{j, k}(x, y) = e_j(x)e_k(y)$ for all $(x, y) \in [-L_x, L_x] \times [-L_y, L_y]$ (recall that $e_j$ is the hat function).  Equivalently, $f_{m,j, k}(t)$ can be rewritten as a summation of convolutions of kernels $\mathcal{K}(x, y), \mathcal{W}(x, y)$ with  density $\rho^{h}(x, y, t), \theta^{h}(x, y, t)$ on grid $\mathcal T$ and then we have 

	\begin{equation}
	\label{f2_2d}
	\begin{aligned}
	f_{m, j, k}(t) &= \sum_{p = 1}^M z_m  z_p \sum_{i=-N_x}^{N_x} \sum_{l=-N_y}^{N_y}  \bar c_{p, i, l}(t) \int_{-L_y}^{L_y} \int_{-L_x}^{L_x}  \mathcal{K} \left(x_{j}-x, y_k - y \right)  e_i(x) e_l(y) \,\mathrm{d} x \,\mathrm{d} y \\
	&+  \sum_{p = 1}^M \sum_{i=-N_x}^{N_x} \sum_{l=-N_y}^{N_y}  \bar c_{p, i, l}(t) \int_{-L_y}^{L_y} \int_{-L_x}^{L_x}  \mathcal{W}\left(x_{j}-x, y_k - y \right)   e_i(x) e_l(y) \,\mathrm{d} x \,\mathrm{d} y.
	\end{aligned} 
	\end{equation}
	
	Similarly, we adopt the efficient field solver of a smaller complexity and sufficient spatial accuracy inherited from the density function  in two-dimension instead of direct computation of $f_{m, j, k}$. It reduces the complexity $O(N^2)$ to $O(N \log N)$ (recall that $N = N_x*N_y$ is the total number of the grid points). Specifically, due to that $e_{j, k}(x, y) = e_j(x) e_k(y)$ has a local support, the whole space integrals in \eqref{f2_2d} can be reduced to local integrals.
	Then for a fixed uniform grid, after suitable change of variables, the nonlocal field can be reformulated as 		
	\begin{equation}
	\begin{aligned}
	f_{m, j, k}(t) &= z_m \Delta x \Delta y \sum_{p = 1}^M z_p \left[ \sum_{i=-N_x+1}^{N_x-1} \sum_{l=-N_y+1}^{N_y-1} \bar c_{p, i, l}(t)  T^{\mathcal{K}}_{j-i, k-l} \right] \\ &+  \Delta x \Delta y \sum_{p = 1}^M \left[ \sum_{i=-N_x+1}^{N_x-1} \sum_{l=-N_y+1}^{N_y-1} \bar c_{p, i, l}(t)  T^{\mathcal{W}}_{j-i, k-l} \right] + H_{m, j, k}^{+} + H_{m, j, k}^{-},
	\end{aligned} 
	\end{equation}
	for all $j = -N_x,\ldots, N_x$ and $k = -N_y,\ldots, N_y$ and the convolution tensor $T^{\mathcal{K}}_{j-i, k-l}$, $T^{\mathcal{W}}_{j-i, k-l}$ and the boundary contribution $H_{m, j, k}^{+}$, $H_{m, j, k}^{-}$ can be obtained similarly to the one-dimensional case. So for brevity, we omit the details of the expression of these variables.
		
	\begin{rem}
	Such fast convolution algorithm in two-dimension only holds for \textsl{uniform } mesh grids and the spatial accuracy (second order) inherits from the finite-volume discretization of the density.
	\end{rem}

	\section{Error Estimates on the Convolution}\label{Sec_Error}
	In this section, we shall analyze the error estimates of the above fast convolution algorithm for computing  the following convolution-type field
	\begin{equation}
	\label{convDim}
	\psi(\boldsymbol{x}) := \int_\Omega\mathcal{K}(\boldsymbol{x}-\boldsymbol{y}) \rho(\boldsymbol{y}) \,\mathrm{d}\boldsymbol{y}, \quad  \boldsymbol{x} \in \Omega:=[-L,L]^{d}
	\end{equation}
	where the kernel can be smooth or singular as long as the convolution is well-defined, and the density $\rho(\boldsymbol{x})$ is given on uniform grid mesh $\mathcal T_{h}$ that is inherited from the finite volume discretization.  The numerical approximation is given by replacing $\rho$ by its (bi)linear interpolation $\rho_{h}(\boldsymbol{x})$ as follows
	\begin{equation}
	\label{convDiscrete}
	\psi_{h}(\bx) := \int_\Omega\mathcal{K}(\boldsymbol{x}-\by) \rho_{h}(\by) \,\mathrm{d}\by, \quad  \boldsymbol{x} \in \Omega,
	\end{equation}
	with $h$ being the mesh size of the grid points.
	The error function is defined as $e_{h}(\boldsymbol{x}):= \psi(\boldsymbol{x})-\psi_{h}(\boldsymbol{x})$. Without loss of generality, we shall first present the error estimates for the one dimensional convolution, then give the results for the two dimensional case. 
	
	\begin{thm}\label{thm3}(Error estimates for the 1D convolution)
		Assuming density $\rho(x) \in C^{2}(\Omega)$, for absolutely integrable kernels $\mathcal{K}(x)$, the error $e_h$ of approximating (\ref{convDim}) with (\ref{convDiscrete}) satisfies
		\begin{equation}
		\label{es1}
		\|e_h\|_{L^{\infty}(\Omega)} \leqslant \frac{h^2}{8} \|\rho^{(2)}\|_{L^{\infty}(\Omega)} \; \max\limits_{x\in\Omega} \int_{\Omega} |\mathcal{K}(x-y)| \,\mathrm{d} y.
		\end{equation}
	\end{thm}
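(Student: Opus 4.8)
The plan is to reduce the error bound entirely to the classical interpolation error for the piecewise linear interpolant, and then move the estimate through the integral against the absolute value of the kernel. Since both $\psi$ and $\psi_h$ are convolutions of the \emph{same} kernel $\mathcal{K}$ against $\rho$ and $\rho_h$ respectively, the error is simply $e_h(x) = \int_\Omega \mathcal{K}(x-y)\,\bigl(\rho(y)-\rho_h(y)\bigr)\,\mathrm{d}y$. Thus the key point is that the singular behavior of the kernel plays no role beyond its absolute integrability; the entire smoothness requirement falls on $\rho$, not on $\mathcal{K}$.

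First I would record the standard one-dimensional estimate for the nodal piecewise linear interpolant: on a uniform grid of spacing $h$, for $\rho \in C^2(\Omega)$ one has the pointwise bound $|\rho(y)-\rho_h(y)| \leqslant \tfrac{h^2}{8}\,\|\rho^{(2)}\|_{L^\infty(\Omega)}$ for all $y\in\Omega$, which comes from writing the interpolation error on each subinterval $[y_i,y_{i+1}]$ via the remainder formula $\rho(y)-\rho_h(y) = \tfrac{1}{2}\rho''(\xi)(y-y_i)(y-y_{i+1})$ and maximizing $|(y-y_i)(y-y_{i+1})|$ over the subinterval, which gives $h^2/4 \cdot \tfrac12 = h^2/8$. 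Here I should be mildly careful about the boundary cells, since the finite volume reconstruction $c^h_m$ in \eqref{chFun} is built from cell averages $\bar c_{m,j}$ rather than nodal values; but the statement of Theorem \ref{thm3} is phrased in terms of the interpolant $\rho_h$ of $\rho$, so I take $\rho_h$ to be exactly the nodal piecewise linear interpolant on $\mathcal T_h$ and the classical bound applies verbatim.

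Next I would simply estimate: for each fixed $x\in\Omega$,
\begin{align*}
|e_h(x)| &= \left| \int_\Omega \mathcal{K}(x-y)\,\bigl(\rho(y)-\rho_h(y)\bigr)\,\mathrm{d}y \right|
\leqslant \int_\Omega |\mathcal{K}(x-y)|\,|\rho(y)-\rho_h(y)|\,\mathrm{d}y \\
&\leqslant \frac{h^2}{8}\,\|\rho^{(2)}\|_{L^\infty(\Omega)} \int_\Omega |\mathcal{K}(x-y)|\,\mathrm{d}y
\leqslant \frac{h^2}{8}\,\|\rho^{(2)}\|_{L^\infty(\Omega)}\,\max_{x\in\Omega}\int_\Omega |\mathcal{K}(x-y)|\,\mathrm{d}y,
\end{align*}
and taking the supremum over $x\in\Omega$ on the left gives exactly \eqref{es1}. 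The interchange of absolute value and integral, and the pulling out of the interpolation bound, are justified precisely because $\mathcal{K}$ is absolutely integrable and $\rho-\rho_h$ is bounded, so the integrand is in $L^1$.

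The main (indeed only) obstacle is a bookkeeping one rather than a conceptual one: making sure the interpolation error bound $h^2/8$ is stated and applied consistently with how $\rho_h$ is actually defined in the scheme — in particular whether the reconstruction near $\partial\Omega$ uses half-cells (as in the finite volume setup following \eqref{bsys2}) and whether that changes the constant. Since the theorem is stated abstractly with $\rho_h$ the (bi)linear interpolant, I would simply adopt that definition and note that the classical $C^2$ interpolation estimate gives the stated constant; no CFL-type or kernel-regularity condition enters. The two-dimensional version, to be treated afterward, follows the same template with the tensor-product bilinear interpolation error replacing the one-dimensional one.
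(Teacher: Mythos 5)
Your proposal is correct and follows essentially the same route as the paper: the classical $h^2/8$ piecewise-linear interpolation bound for $\rho\in C^2$, pushed through the integral against $|\mathcal{K}(x-y)|$, and a final supremum over $x$. The extra care you take about the boundary half-cells is reasonable but, as you note, immaterial since the theorem is stated for the nodal interpolant $\rho_h$.
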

	
	\begin{proof}
		First,  the linear interpolation error of $\rho(x)$ on interval 
		$[x_j,x_{j}+h]$ reads as follows
		\begin{equation*}
		|\rho(x)-\rho_{h}(x)| \leq  ~ \left(\frac{h}{2}\right)^{2}  \big| \frac{\rho^{(2)}(\eta)}{2} \big| = \frac{h^{2}}{8} \big| \rho^{(2)}(\eta)\big|, \quad \eta \in [x_{j}, x_{j}+h].
		\end{equation*}
		Then, substituting the above estimate into $e_{h}(\bx)$, we have 
		\begin{equation*}
		\begin{aligned}
		|e_h(x)| 
		&\leqslant \int_{-L}^L |\mathcal{K}(x-y)| |\rho(y) - \rho_h(y)|\,\mathrm{d} y \\
		&\leqslant \frac{h^2}{8} \|\rho^{(2)}\|_{L^{\infty}} \int_{-L}^L |\mathcal{K}(x-y)| \,\mathrm{d} y.
		\end{aligned}
		\end{equation*}
		Taking maximum with respect to $x$ on both sides, the following estimates hold directly
		\begin{equation*}
		\|e_h\|_{L^{\infty}(\Omega)}\leqslant \frac{h^2}{8} \|\rho^{(2)}\|_{L^{\infty}} \max\limits_{x\in \Omega} \int_\Omega |\mathcal{K}(x-y)| \,\mathrm{d} y.
		\end{equation*}
	\end{proof}
	
	\begin{rem}
		From the proof, it is clearly seen that the second order convergence in mesh size $h$ stems from the linear interpolation of the density. As long as the kernel $\mathcal K$ is absolutely integrable, the error estimates hold no matter whether the kernel is smooth or singular. \end{rem}
	
	\begin{rem}
		In fact, the convergence order in $\psi$ is equal to the interpolation of density $\rho$. It can be raised to higher order as the density is approximated with some higher order interpolation scheme, for example, piecewise high order local polynomials.
	\end{rem}
	
	The above error estimates can be naturally extended to higher dimensional convolutions.  For the two dimensional convolution, we shall have the following error estimates.
	\begin{thm}\label{Err2D}
		(Error estimates for the 2D convolution)
		Assuming density $\rho(\bx) \in C^{(2,2)}(\Omega)$, for absolutely integrable kernels $\mathcal{K}(\bx)$, the error $e_h$ of approximating (\ref{convDim}) with (\ref{convDiscrete}) satisfies
		\begin{equation}
		\label{es3}
	\|e_h\|_{L^{\infty}(\Omega)} \leqslant {C} ~ h_{\rm max}^{2} \; \max\limits_{\bx\in\Omega} \int_{\Omega} |\mathcal{K}(\bx-\by)| \,\mathrm{d} \by,
		\end{equation}
		where $h_{\rm max}:=\max\{h_{x},h_{y}\}$ with $h_{x},h_{y}$ being the mesh size in $x$ and $y$ directions and 
		\begin{equation}
		\label{Cnst2Int}
		C = \frac{1}{4} \max \left\{ {\|  \partial_{y}^{2}\rho\|_{L^{\infty}(\Omega)}}, \|  \partial_{x}^{2}\rho\|_{L^{\infty}(\Omega)} + \frac{\| \partial_{x}^{2} \partial_{y}^{2}\rho\|_{L^{\infty}(\Omega)}}{8} h_{y}^2  \right\}.\end{equation}
		
	\end{thm}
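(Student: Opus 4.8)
The plan is to mirror the one-dimensional argument behind Theorem~\ref{thm3}: pull the kernel out in $L^1$ and reduce the whole statement to a pointwise bound on the bilinear interpolation error. Writing $e_h(\bx)=\int_\Omega \mathcal{K}(\bx-\by)\,\bigl(\rho(\by)-\rho_h(\by)\bigr)\,\mathrm{d}\by$ and using absolute integrability of $\mathcal{K}$,
\begin{equation*}
|e_h(\bx)|\le \|\rho-\rho_h\|_{L^\infty(\Omega)}\int_\Omega|\mathcal{K}(\bx-\by)|\,\mathrm{d}\by,
\end{equation*}
so that taking the supremum over $\bx\in\Omega$ already isolates the factor $\max_{\bx\in\Omega}\int_\Omega|\mathcal{K}(\bx-\by)|\,\mathrm{d}\by$ appearing in \eqref{es3}. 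Hence the entire content of the theorem is the estimate $\|\rho-\rho_h\|_{L^\infty(\Omega)}\le C\,h_{\rm max}^2$ with $C$ as in \eqref{Cnst2Int}; no property of $\mathcal{K}$ beyond integrability enters, consistent with the remark following Theorem~\ref{thm3}.

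To bound the bilinear interpolation error I would factor the bilinear interpolant as a tensor product $\rho_h=\Pi_x\Pi_y\rho$, where $\Pi_x$ and $\Pi_y$ denote the piecewise-linear nodal interpolation operators in the $x$- and $y$-variables on the uniform grid $\mathcal{T}_h$, and then split
\begin{equation*}
\rho-\rho_h=(I-\Pi_y)\rho+(I-\Pi_x)(\Pi_y\rho).
\end{equation*}
The first term is controlled cellwise by the one-dimensional interpolation bound already used in Theorem~\ref{thm3}, giving $\|(I-\Pi_y)\rho\|_{L^\infty}\le \tfrac{h_y^2}{8}\|\partial_y^2\rho\|_{L^\infty(\Omega)}$. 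For the second term I apply the same one-dimensional bound in the $x$-variable to $g:=\Pi_y\rho$, so $\|(I-\Pi_x)g\|_{L^\infty}\le \tfrac{h_x^2}{8}\|\partial_x^2 g\|_{L^\infty(\Omega)}$; since $\Pi_y$ acts only on $y$ it commutes with $\partial_x^2$, whence $\partial_x^2 g=\Pi_y(\partial_x^2\rho)$, and writing $\Pi_y(\partial_x^2\rho)=\partial_x^2\rho+(\Pi_y-I)(\partial_x^2\rho)$ and invoking the one-dimensional bound once more (this is where the $C^{(2,2)}$ regularity is needed) gives $\|\partial_x^2 g\|_{L^\infty}\le \|\partial_x^2\rho\|_{L^\infty(\Omega)}+\tfrac{h_y^2}{8}\|\partial_x^2\partial_y^2\rho\|_{L^\infty(\Omega)}$.

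Collecting the two contributions,
\begin{equation*}
\|\rho-\rho_h\|_{L^\infty(\Omega)}\le \frac{h_y^2}{8}\|\partial_y^2\rho\|_{L^\infty}+\frac{h_x^2}{8}\Bigl(\|\partial_x^2\rho\|_{L^\infty}+\frac{h_y^2}{8}\|\partial_x^2\partial_y^2\rho\|_{L^\infty}\Bigr),
\end{equation*}
and then $h_x^2,h_y^2\le h_{\rm max}^2$ together with the elementary inequality $a+b\le 2\max\{a,b\}$ collapses the right-hand side to exactly $C\,h_{\rm max}^2$ with $C$ as in \eqref{Cnst2Int}; substituting this back into the first display proves \eqref{es3}. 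I expect the only delicate points to be bookkeeping: justifying that $\Pi_y$ commutes with $\partial_x^2$ and that the cellwise one-dimensional bounds assemble into a global $L^\infty$ estimate --- both because $\Pi_y$ is a local, $L^\infty$-nonexpansive operator whose restriction to each cell is a convex combination of nodal values --- and keeping the asymmetric split between the $x$- and $y$-directions straight, which is what produces the unbalanced constant in \eqref{Cnst2Int}. Boundary cells require no separate treatment, since Theorem~\ref{thm3} already covers the first and last subintervals.
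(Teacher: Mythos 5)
Your proposal is correct and follows essentially the same route as the paper: the paper's proof simply pulls the kernel out in $L^1$ as in the 1D case and cites a textbook for the bilinear interpolation bound, whereas you additionally derive that bound via the tensor-product splitting $\rho-\rho_h=(I-\Pi_y)\rho+(I-\Pi_x)(\Pi_y\rho)$, recovering exactly the asymmetric constant \eqref{Cnst2Int}. The details you supply (commutation of $\Pi_y$ with $\partial_x^2$, the cellwise-to-global assembly, and the final $a+b\le 2\max\{a,b\}$ step) are all sound.
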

	\begin{proof}
		The bilinear interpolation error estimates can be found in textbooks \cite{Guan}. Then, the remaining part follows directly from the 1D case, and we omit details here for brevity. 
	\end{proof}
	
	Hereafter, we present some commonly used kernels for ionic fluid in different dimensions. 
	\begin{cor}
		For kernels $\mathcal{K}(\bx) = |\bx|^{-\alpha}$, we have in 1D	\begin{equation}
		\|e_h\|_{L^{\infty}} \leqslant  \frac{1}{4(1-\alpha)} (2L)^{1-\alpha} h^{2}\|\rho^{(2)}\|_{L^{\infty}} ,\forall ~\alpha \in (0,1),
		\end{equation}
		and in 2D
		\begin{equation}
		\|e_h\|_{L^{\infty}} \leqslant      C~C_{\alpha} (2L)^{2-\alpha} ~h_{\rm max}^2  ,\forall ~\alpha \in (0,2),
		\end{equation}
		with $C_{\alpha} :=\int_{[-1,1]^{2}} |\bx|^{-\alpha} \,\mathrm{d}\bx < \infty$ and $C$ is given by \eqref{Cnst2Int}.
	
	\end{cor}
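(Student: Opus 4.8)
The plan is to specialize the general error estimates of Theorems \ref{thm3} and \ref{Err2D} to the algebraic kernel $\mathcal K(\bx)=|\bx|^{-\alpha}$, so the only real content is to estimate the quantity $\max_{x\in\Omega}\int_\Omega|\mathcal K(x-y)|\,\mathrm{d}y$ that appears in those bounds. First I would observe that for any $x\in\Omega=[-L,L]^d$ we have, after the substitution $\bz=\bx-\by$, the inclusion $\{\bx-\by:\by\in\Omega\}\subseteq[-2L,2L]^d$, so that
\begin{equation*}
\max_{\bx\in\Omega}\int_\Omega|\bx-\by|^{-\alpha}\,\mathrm{d}\by\leqslant\int_{[-2L,2L]^d}|\bz|^{-\alpha}\,\mathrm{d}\bz,
\end{equation*}
provided the right-hand side is finite, which holds precisely when $\alpha<d$ (this is exactly the absolute integrability hypothesis of Theorems \ref{thm3} and \ref{Err2D}). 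This reduces everything to evaluating or bounding that one integral in $d=1$ and $d=2$.

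In 1D, $\int_{-2L}^{2L}|z|^{-\alpha}\,\mathrm{d}z=2\int_0^{2L}z^{-\alpha}\,\mathrm{d}z=\frac{2}{1-\alpha}(2L)^{1-\alpha}$ for $\alpha\in(0,1)$, and plugging this into \eqref{es1} together with the prefactor $\tfrac{h^2}{8}$ gives $\|e_h\|_{L^\infty}\leqslant\frac{h^2}{8}\cdot\frac{2}{1-\alpha}(2L)^{1-\alpha}\|\rho^{(2)}\|_{L^\infty}=\frac{1}{4(1-\alpha)}(2L)^{1-\alpha}h^2\|\rho^{(2)}\|_{L^\infty}$, which is the claimed 1D bound. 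In 2D, rather than computing $\int_{[-2L,2L]^2}|\bz|^{-\alpha}\,\mathrm{d}\bz$ in closed form I would rescale: with $\bz=2L\,\bw$ the integral equals $(2L)^{2-\alpha}\int_{[-1,1]^2}|\bw|^{-\alpha}\,\mathrm{d}\bw=(2L)^{2-\alpha}C_\alpha$, and $C_\alpha<\infty$ for $\alpha\in(0,2)$ (e.g.\ by passing to polar coordinates on the disc of radius $\sqrt2$ containing $[-1,1]^2$, where the angular part is finite and the radial integral $\int_0^{\sqrt2}r^{1-\alpha}\,\mathrm{d}r$ converges iff $\alpha<2$). Substituting into \eqref{es3} with the constant $C$ from \eqref{Cnst2Int} yields $\|e_h\|_{L^\infty}\leqslant C\,C_\alpha(2L)^{2-\alpha}h_{\mathrm{max}}^2$, as stated.

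There is essentially no hard step here — the corollary is a direct application of the two preceding theorems once the kernel-integral factor is controlled. The only point requiring a little care is bookkeeping the domain of the shifted kernel: one must check that translating $\Omega$ by an arbitrary $\bx\in\Omega$ keeps the support inside $[-2L,2L]^d$ so that the uniform-in-$\bx$ bound is legitimate, and one must record the sharp range of $\alpha$ ($\alpha<d$) for which the bounding integral converges, matching the absolute-integrability hypothesis. I would also note in passing that the rescaling argument makes the $L$-dependence $(2L)^{d-\alpha}$ transparent and shows the constant $C_\alpha$ is a pure number independent of $L$ and $h$.
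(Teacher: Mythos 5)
Your proposal is correct and follows essentially the same route as the paper: reduce the kernel factor to $\max_{\bx\in\Omega}\int_{\Omega}|\mathcal K(\bx-\by)|\,\mathrm{d}\by \leqslant \int_{[-2L,2L]^{d}}|\bz|^{-\alpha}\,\mathrm{d}\bz$, evaluate it directly in 1D, and rescale to $[-1,1]^{2}$ in 2D to obtain $(2L)^{2-\alpha}C_{\alpha}$. You in fact supply the ``simple calculation'' in 1D and the finiteness check of $C_{\alpha}$ that the paper leaves implicit.
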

	\begin{proof} 
		The error estimates are reduced to compute $\max_{\bx\in \Omega} \int_{\Omega} |\mathcal{K}(\bx-\by)| \,\mathrm{d} \by$. In fact, it suffices to compute $\max_{\bx\in 2\Omega} \int_{2\Omega} |\mathcal{K}(\by)| \,\mathrm{d} \by$, where $2\Omega :=[-2L,2L]^{d}$. It is easy to prove the 1D results with simple calculation, while in 2D we have 
		\begin{eqnarray*}
			\max_{\bx\in \Omega} \int_{\Omega} |\mathcal{K}(\bx-\by)| \,\mathrm{d} \by &\leqslant & \max_{
				\bx\in 2\Omega} \int_{2\Omega} |\mathcal{K}(\by)| \,\mathrm{d} \by 
			\leqslant \max_{\bx \in 2\Omega}\int_{2\Omega} |\by|^{-\alpha} \,\mathrm{d} \by\\
			&\leqslant&  (2L)^{2-\alpha}\max_{\bx \in [-1,1]^{2}} \int_{ [-1,1]^{2}} |\by|^{-\alpha} \,\mathrm{d}\by:= (2L)^{2-\alpha} C_{\alpha}.
		\end{eqnarray*}
		The last inequality holds using a change of variables $\tilde \by = \by/(2L)$.
	\end{proof}
	
	\begin{cor}
		In 2D, for  kernel $\mathcal{K}(\boldsymbol{x}) = \ln |\boldsymbol{x}|$, we have
		the following estimate:
		\begin{equation}
		\label{ineq2}
		\|e_h\|_{L^{\infty}} \leqslant C~ C_{L} ~h_{\rm max}^2.
		\end{equation}
		where $C_{L} >0$ is a constant depending only on $L$ and $C$ is defined in \eqref{Cnst2Int}.
	\end{cor}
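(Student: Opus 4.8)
The plan is to invoke Theorem~\ref{Err2D} directly and then reduce everything to a single one-variable integral. Theorem~\ref{Err2D} already delivers
\[
\|e_h\|_{L^{\infty}(\Omega)} \leqslant C\, h_{\rm max}^{2}\; \max_{\bx\in\Omega}\int_{\Omega}\bigl|\ln|\bx-\by|\bigr|\,\mathrm{d}\by,
\]
with $C$ the constant in \eqref{Cnst2Int}, so the only thing left is to show that $\max_{\bx\in\Omega}\int_{\Omega}|\ln|\bx-\by||\,\mathrm{d}\by$ is a finite constant depending on $L$ alone. As in the proof of the previous corollary, I would first translate and enlarge the domain: for every $\bx\in\Omega=[-L,L]^2$ one has $\bx-\Omega\subset 2\Omega:=[-2L,2L]^2$, hence
\[
\max_{\bx\in\Omega}\int_{\Omega}\bigl|\ln|\bx-\by|\bigr|\,\mathrm{d}\by \;\leqslant\; \int_{2\Omega}\bigl|\ln|\bz|\bigr|\,\mathrm{d}\bz,
\]
and the right-hand side no longer depends on $\bx$.

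Next I would bound $2\Omega$ by a disk: since $2\Omega\subset B(0,R)$ with $R=2\sqrt{2}\,L$, and the integrand $|\ln|\bz||$ is nonnegative, enlarging the domain only increases the integral, so $\int_{2\Omega}|\ln|\bz||\,\mathrm{d}\bz\leqslant\int_{B(0,R)}|\ln|\bz||\,\mathrm{d}\bz$. Passing to polar coordinates,
\[
\int_{B(0,R)}\bigl|\ln|\bz|\bigr|\,\mathrm{d}\bz \;=\; 2\pi\int_{0}^{R} r\,|\ln r|\,\mathrm{d}r,
\]
which is finite because $r|\ln r|$ is continuous on $(0,R]$ and tends to $0$ as $r\to 0^{+}$; splitting at $r=1$ gives the explicit value $\tfrac14$ for $\int_0^1 r(-\ln r)\,\mathrm{d}r$ and $\tfrac{R^2}{2}\ln R-\tfrac{R^2}{4}+\tfrac14$ for $\int_1^R r\ln r\,\mathrm{d}r$ when $R\geqslant 1$ (only the first piece is present if $R<1$). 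Setting $C_L:=2\pi\int_0^{2\sqrt{2}L} r|\ln r|\,\mathrm{d}r$, a quantity depending only on $L$, I obtain exactly $\|e_h\|_{L^{\infty}}\leqslant C\,C_L\,h_{\rm max}^2$.

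There is essentially no genuine obstacle here: the estimate is a one-line consequence of Theorem~\ref{Err2D} once the kernel integral is controlled. The only point requiring a moment's care is that $\ln|\bz|$ carries a logarithmic singularity at the origin, so one cannot bound $|\ln|\bx-\by||$ pointwise by a constant; but in two dimensions this singularity is absolutely integrable, as the polar-coordinate computation makes transparent, which is precisely why the hypothesis of Theorem~\ref{Err2D} is satisfied. A more pedestrian alternative is to integrate $|\ln|\bz||$ over $2\Omega$ directly in Cartesian coordinates, but the passage to the disk is cleaner and immediately exhibits the finiteness of $C_L$.
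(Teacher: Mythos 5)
Your proof is correct and follows essentially the same route as the paper's: apply Theorem~\ref{Err2D} and reduce $\max_{\bx\in\Omega}\int_{\Omega}\bigl|\ln|\bx-\by|\bigr|\,\mathrm{d}\by$ to the integral of $\bigl|\ln|\bz|\bigr|$ over $[-2L,2L]^{2}$, whose finiteness gives $C_{L}$. The only cosmetic difference is that you enclose that square in the disk $B(0,2\sqrt{2}L)$ and evaluate $2\pi\int_{0}^{R}r|\ln r|\,\mathrm{d}r$ in closed form, whereas the paper rescales to a unit-size square and uses $\bigl|\ln(L_{0}|\by|)\bigr|\leqslant \ln L_{0}-\ln|\by|$ with $L_{0}=2\sqrt{2}L$; both produce an explicit admissible constant.
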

	\begin{proof}
		On $\Omega:=[-L, L]^{2}$, we have 
		\begin{eqnarray}
		\int_{\Omega} |\mathcal{K}(\bx-\by)| \,\mathrm{d} \by &\leqslant& 
		\int_{2\Omega} \big|\ln|\by| \big| \,\mathrm{d} \by \leqslant L_{0}^{2} \int_{[-\frac{\sqrt{2}}{2},\frac{\sqrt{2}}{2}]^{2} } (-\ln |\by| + \ln L_{0} )\,\mathrm{d}\by\\
		&\leqslant&2 L_{0}^{2}\ln L_{0}  + L_{0}^{2} \int_{[-\frac{\sqrt{2}}{2},\frac{\sqrt{2}}{2}]^{2}} -\ln |\by| \,\mathrm{d}\by \\
		&\leqslant& L_{0}^{2} \left( 2 \ln L_{0} + \pi/2\right),
		\end{eqnarray}
		with $L_{0}=2\sqrt{2} L$.
		
	\end{proof}

	\section{Numerical Tests and Experiments}
	In this section, we present extensive one- and two-dimensional numerical examples to verify the convergence order, the asymptotic complexity of the fast convolution algorithm and the properties of the numerical schemes, and explore various modeling phenomena, such as the finite size effect, the nonlocal aggregation, etc. 
	
	In the following numerical examples, we sometimes add some additional external field into the original model (\ref{model1a})-(\ref{model1c}) to make sure the steady states are effectively localized, that is to say, the  system we consider becomes for $m=1,\cdots,M$,
	\begin{align}
	&\partial_t c_m(\boldsymbol{x}, t) = \nabla \cdot \left( \exp\{-\left( z_m \mathcal{K}*\rho + \mathcal{W}*\theta + V_{\text{ext}} \right)\} \nabla \frac{c_m}{\exp\{-\left( z_m \mathcal{K}*\rho + \mathcal{W}*\theta + V_{\text{ext}} \right)\}} \right), \label{model2a} \\
	&c_m(\boldsymbol{x},0) = c^0_{m}(\boldsymbol{x}), \label{model2c}
	\end{align}
	where $V_{\text{ext}}(\boldsymbol{x})$ is the external potential. 
	
	\subsection{Convergence Test}
	
	The errors of numerical solutions $\left(c_{\Delta x, \Delta t}\right)_{m}, m = 1, \cdots, M,$ are computed as follows
	\begin{align}
	&\|\boldsymbol{e}_{\Delta x, \Delta t}\|_{l^{\infty}} := \max_{m, j} \max_{x \in C_j}\left|\left(c_{\Delta x, \Delta t}\right)_{m}(x) - c^{\text{ref}}_m(x)\right|, \\
	&\|\boldsymbol{e}_{\Delta x, \Delta t}\|_{l^p} := \left( \sum_{m = 1}^{M} \sum_{j} \int_{C_j} \left|\left(c_{\Delta x, \Delta t}\right)_{m}(x) - c^{\text{ref}}_m(x)\right|^p \,\mathrm{d} x \right)^{\frac{1}{p}}, ~p = 1, 2, 
	\end{align}
	where $\Delta x$ is the spatial mesh size, $\Delta t$ is the time step and $c^{\text{ref}}_m$ is the reference solution of the $m$-th species which is obtained by small mesh size $\Delta x$ and small time step $\Delta t$.
	
	\subsubsection{Convergence Test for One-dimensional Problems}
	Consider the system (\ref{model2a}) for complex ionic fluids in one-dimension with the singular kernel $\mathcal{W}(x) = \frac{1}{|x|^{1/2}}$, the kernel $\mathcal{K}(x) = \exp(-|x|)$ and the external potential  $V_{\text{ext}}(x) = \frac{1}{2} x^2$. Note that, the electrostatic kernel $\mathcal{K}(x)$ in one-dimension is not physically relevant, and thus this numerical example is a toy model, which only serves the purpose of stability and convergence tests.  The initial conditions are taken as
	\begin{equation}
	\label{inc}
	\left\{
	\begin{array}{lll}
	c_1(x, 0) = \frac{1}{2 \sqrt{2 \pi}} \exp \left(- \frac{1}{2} (x - 2)^2 \right) &\text{with} &z_1 = 1, \\
	c_2(x, 0) = \frac{1}{\sqrt{2 \pi}} \exp \left(-\frac{1}{2} (x + 2)^2 \right) &\text{with} &z_2 = -1.
	\end{array}
	\right.
	\end{equation}
	We verify the convergence order of our scheme in both space and time. Here, we take the computation domain as $[-L, L], \ L = 10$, then the results of the second order convergence in space of error $\boldsymbol{e}_{\Delta x, \Delta t}$ in $l^{\infty}, \ l^1$ and $l^2$ norms at time $t = 0.1$ are shown in Fig \ref{con}. Left, where we take the uniform mesh size $\Delta x_j = \Delta x_0 2^{-j}, j = 0, 1, 2, 3, 4$ with $\Delta x_0 = 0.6250$, 
	and $\Delta t$ = $10^{-5}$. 
	In this test, the solution on mesh with mesh size $\Delta x = \Delta x_0 2^{-6}$
	is taken as the reference solution. 
	
	Next, in terms of the convergence order in time, we show the numerical errors at time $t = 1$ in Fig \ref{con}. Right, where the numerical solutions are computed with the time step $\Delta t_j = \Delta t_0 2^{-j}, j = 0, 1, 2, 3, 4, 5, 6, 7, 8, 9, 10$ respectively with $\Delta t_0 = 0.1024$,
    and $\Delta x = 0.02$.
	Here, the solution with $\Delta x = 0.02$ and $\Delta t = 10^{-5}$ is taken as the reference solution. From Fig \ref{con}. Right we can conclude that our scheme is of first order in time and has improved stability performance. In particular, such a scheme is free of CFL conditions and the numerical evidence strongly suggests that it is unconditionally stable.
	
	\begin{figure}[htp] 
		\subfigure{
			\begin{minipage}[t]{0.5\linewidth}
				\centering
				\includegraphics[width=1.0\linewidth]{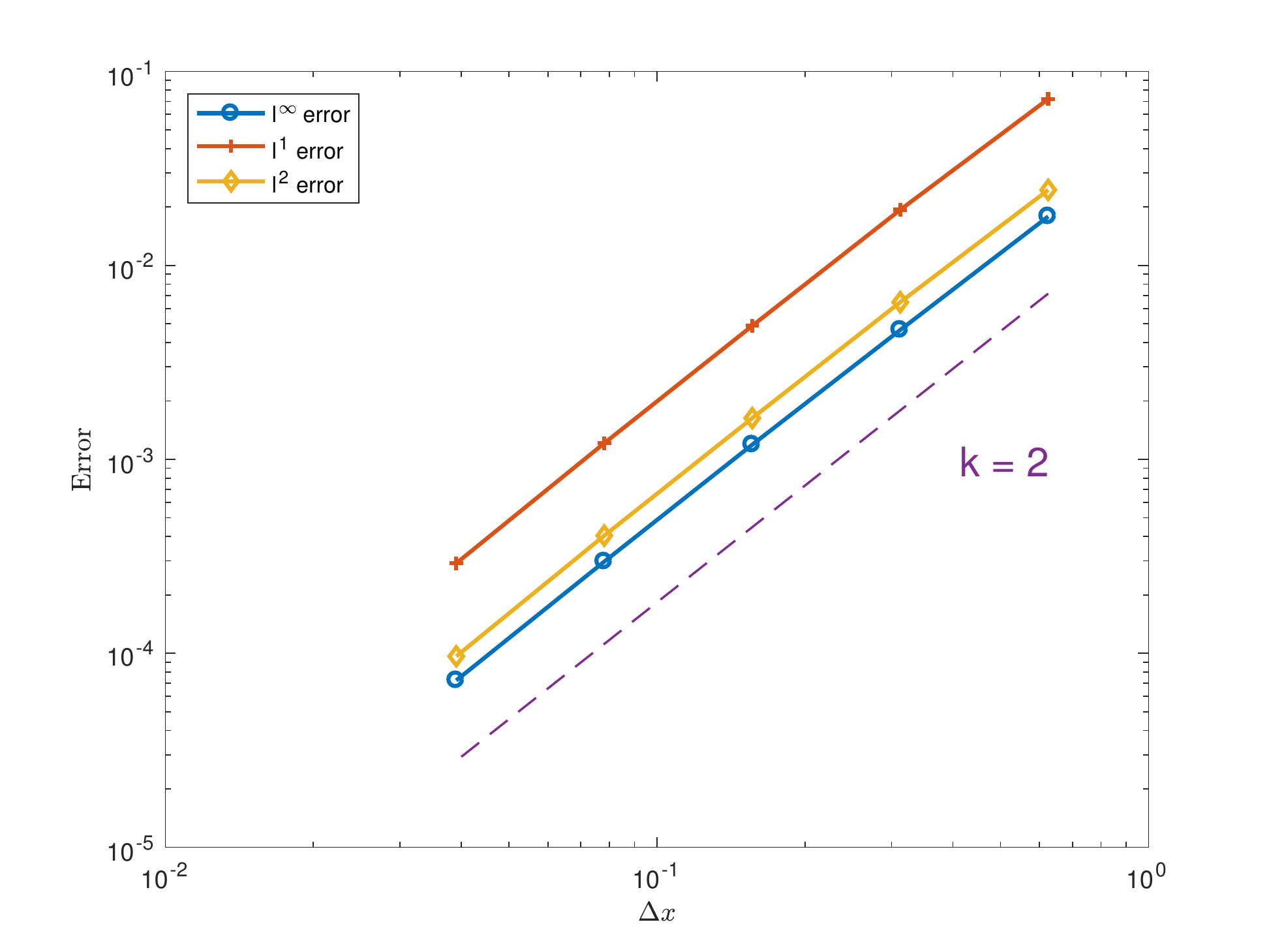}
			\end{minipage}
			\begin{minipage}[t]{0.5\linewidth}
				\centering
				\includegraphics[width=1.0\linewidth]{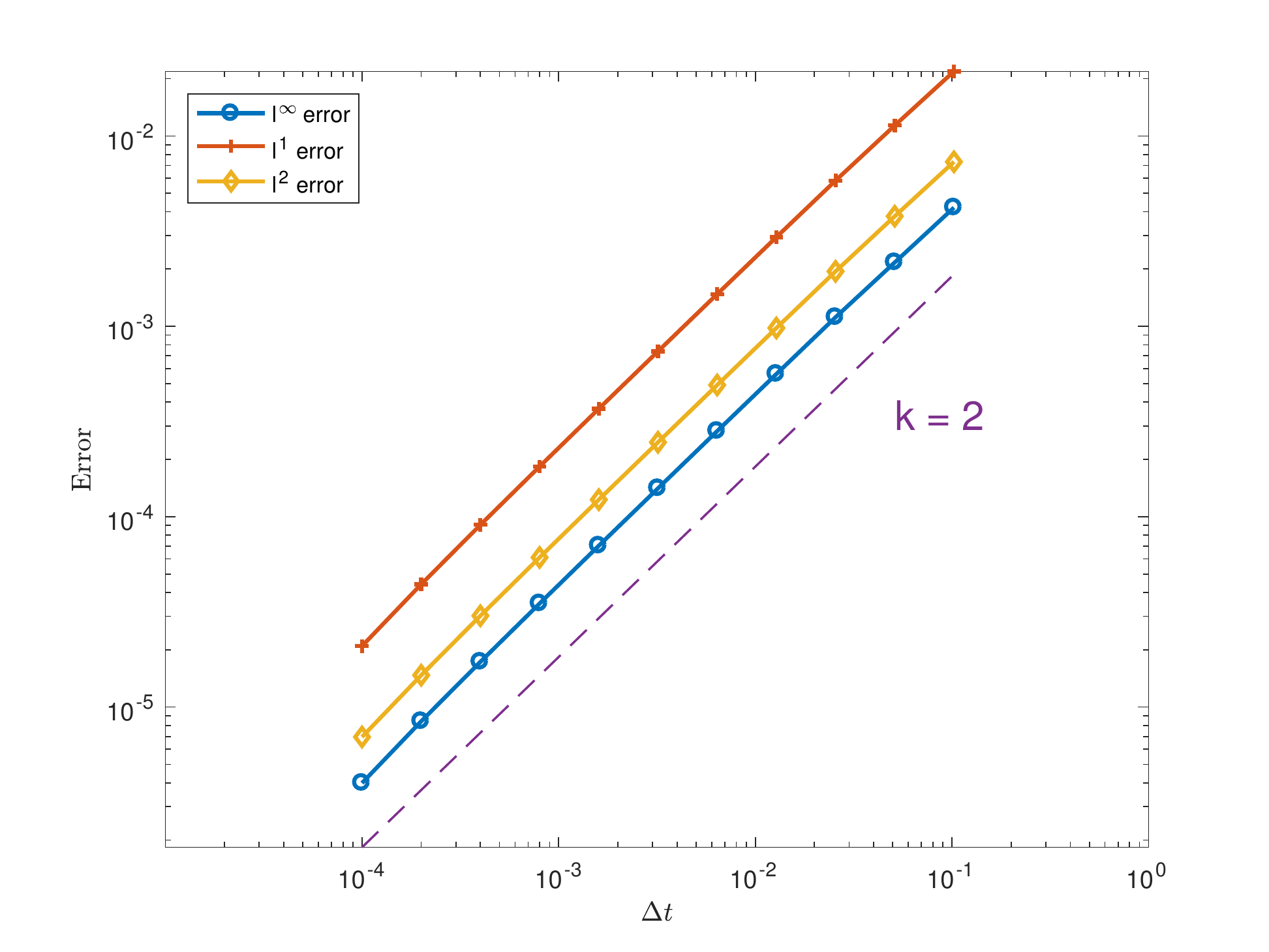}
			\end{minipage}
		}
		\centering
		\caption{Left: {\bf Second order convergence in space in 1D.} The loglog plot of errors with the mesh size $\Delta x_j = \Delta x_0 2^{-j}, j = 0, 1, 2, 3, 4$ with $\Delta x_0 = 0.6250$, at time $t = 0.1$ ($\Delta t = 10^{-5}$). The reference solution is computed on the grid $\Delta x = \Delta x_0 2^{-6}$ and $\Delta t = 10^{-5}$. Right: {\bf First order convergence in time in 1D and unconditionally stability.} The loglog plot of errors with the time step $\Delta t_j = \Delta t_0 2^{-j}, j = 0, 1, 2, 3, 4, 5, 6, 7, 8, 9, 10$ with $\Delta t_0 = 0.1024$, at time $t = 1$ ($\Delta x = 0.02$). The reference solution is computed on the grid $\Delta x = 0.02$ and $\Delta t = 10^{-5}$.}
		\label{con}
	\end{figure}
	
	\subsubsection{Comparison with Regularized Kernels}
	In order to demonstrate with necessity of constructing a numerical scheme which is compatible with singular kernels, we compare the numerical performance of the system with a singular kernel and its counterpart with a regularized kernel. Consider the system (\ref{model2a}) with $\mathcal{K}(x) = \exp(-|x|)$, $\mathcal{W}(x) = \frac{1}{|x|^{1/2}}$, $V_{\text{ext}}(x) = 10 x^2$ and the initial conditions are given by 
	\begin{equation}
	\label{iniex1}
	\left\{
	\begin{array}{lll}
	c_1(x, 0) = \frac{1}{2 \sqrt{2 \pi}} \exp \left(-20 \left(x - \frac{1}{5}\right)^2 \right) &\text{with} &z_1 = 1, \\
	c_2(x, 0) = \frac{1}{\sqrt{2 \pi}} \exp \left(-20 \left(x + \frac{1}{5}\right)^2 \right) &\text{with} &z_2 = -1.
	\end{array}
	\right.
	\end{equation}
	
	In this test, we aim to investigate the discrepancy of the solutions if the singular kernel $\mathcal{W}(x) = \frac{1}{|x|^{1/2}}$ is replaced by the corresponding regularized kernel $\mathcal{W}^{\epsilon}(x) = \frac{1}{|x|^{1/2} + \epsilon}$ while other conditions, including the kernel $\mathcal{K}$, remain the same. 
	
	For the problem with regularized kernel $\mathcal{W}^{\epsilon}(x) = \frac{1}{|x|^{1/2} + \epsilon}$, we adopt the same numerical scheme as (\ref{bsys2})-(\ref{bsys3}) while the only difference is that, due to the non-singularity, the numerical convolution of the approximate regularized kernel $\mathcal{W}^{\epsilon}(x) = \frac{1}{|x|^{1/2} + \epsilon}$ is computed as	
	\begin{equation}
	\label{sys4r}
	f_{m, j}(t) = \Delta x \sum_{i = -N_x}^{N_x} \left( z_m \mathcal{K}_{j-i} \rho_{i} + \mathcal{W}^{\epsilon}_{j-i} \theta_{i} \right),
	\end{equation}
	where the discrete kernel $\mathcal{K}_{j - i} = \mathcal{K}(x_j - x_i)$ and $\mathcal{W}^{\epsilon}_{j - i} = \mathcal{W}^{\epsilon}(x_j - x_i)$.
	Moreover, the discrete total charge density $\rho_{j}$ and the discrete total mass density $\theta_{j}$ are denoted respectively by
	\begin{align}
	\rho_{j} = \sum_{m = 1}^M z_m \bar{c}_{m, j}, ~~
	\theta_{j} = \sum_{m = 1}^M \bar{c}_{m, j}.
	\end{align}

	Here, we observe the discrepancy between the solution with singular kernel $\mathcal{W}$ solved by the scheme (\ref{sysodeb}), (\ref{sysfluxb}) and the solution with the regularized kernel $\mathcal{W}^{\epsilon}$ solved by the scheme (\ref{bsys2}), (\ref{bsys3}), (\ref{sys4r}) with the same time discretization as in (\ref{sysodeb}), (\ref{sysfluxb}).
	And we aim to observe that how the discrepancy changes with different $\epsilon$ and different grid numbers $N_x$.
		
	In this test, we take $\epsilon = \frac{1}{2}, \frac{1}{8}, \frac{1}{32}$, the computation domain as $[-L, L], \ L = 1$, then the results of discrepancy in $l^{\infty}$ norms at time $t = 0.5$ are shown in Fig \ref{im1} where we take the number of the grid points $N_x = 2^{3+j} , j = 0, 1, 2, 3, 4, 5, 6, 7, 8, 9$ (i.e. the uniform mesh size $\Delta x_j = \Delta x_0 2^{-j}$ with $\Delta x_0 = 0.25$)
	and $\Delta t = 0.001$. 
	
	From Fig \ref{im1}, we learn that for a given $\epsilon$, if the mesh size $\Delta x$ is relatively large compared with the regularization parameter $\epsilon$, i.e. the spatial mesh fails to resolve the regularized kernel function, the discrepancy is dominated by the discretization error. In particular, when the regularized kernel is not well resolved, the discrepancy may be larger for smaller $\epsilon$, which seems to suggest that, the regularization parameter gives additional constraint in the spatial mesh in order to accurately capture the solution behavior of the original system. As a consequence, when $\Delta x$ is fixed, smaller $\epsilon$ may not lead to smaller discrepancies, which is the case when  $\Delta x =0.25, 0.125,  0.0625$ shown in Fig \ref{im1}.
	
	On the other hand, if the mesh size $\Delta x$ is small enough, the discrepancy will be dominated by the approximation error introduced by the regularization and thus solutions with regularized kernel can be a good approximation to that with the corresponding singular kernel as studied in \cite{Liu2019}.  Furthermore, assuming the mesh size $\Delta x$ is sufficiently small  for different $\epsilon$, respectively, smaller $\epsilon$ leads to a smaller discrepancy, which means a better approximation.
	
	This test demonstrates the significance of designing a numerical scheme which is compatible with singular kernels.  Our proposed scheme does not reply on any regularization, and thus is free of the spatial mesh constraint.
	
	\begin{figure}[htp] 
		\subfigure{
			\begin{minipage}[t]{0.5\linewidth}
				\centering
				\includegraphics[width=1.0\linewidth]{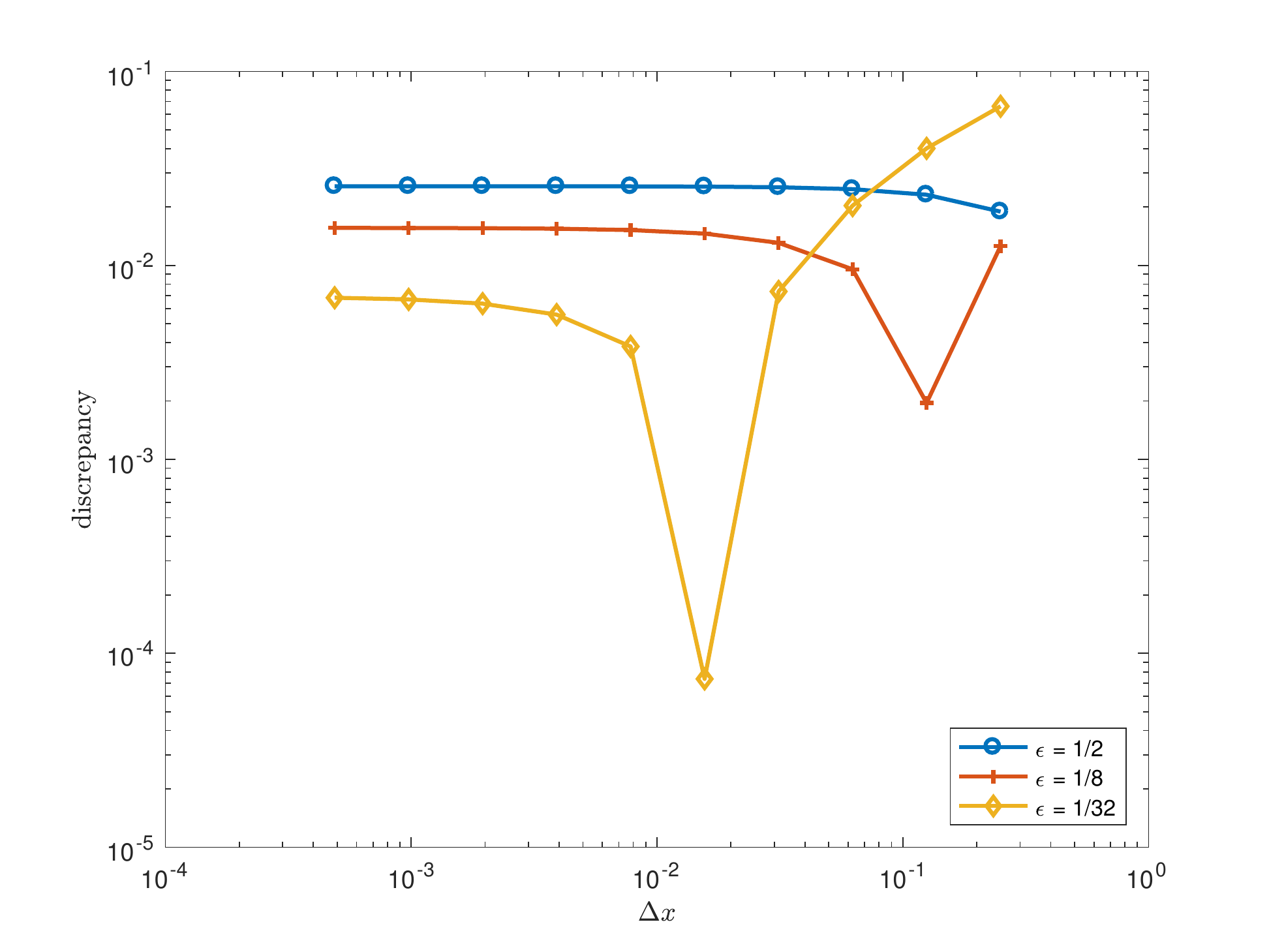}
			\end{minipage}
		}
		\centering
		\caption{ {\bf Convergence test with different $\epsilon$.} The loglog plot of the discrepancy with respect to the number of the grid points $N_x = 2^{3+j} , j = 0, 1, 2, 3, 4, 5, 6, 7, 8, 9$ (the uniform mesh size $\Delta x_j = \Delta x_0 2^{-j}$ with $\Delta x_0 = 0.25$) at time $t = 0.5$. The solution solved by (\ref{bsys4}) is considered as the reference solution. }
		\label{im1}
	\end{figure}
	
	\subsubsection{Convergence Test for Two-dimensional Problems}
	
	In this test, we verify that our scheme in 2D is of second order in space. Here we consider the two-dimensional singular kernel $\mathcal{W}(x, y) = \frac{1}{r^{3/2}}, r = \sqrt{x^2 + y^2}$, $\mathcal{K}(x, y) = - \frac{1}{2 \pi} \ln r$, the external potential $V_{\text{ext}}(x, y) = 10 r^2$ and the initial conditions  are given by 
	\begin{equation}
	\label{iniex3}
	\left\{
	\begin{array}{lll}
	c_1^0 = \frac{1}{2 \pi} \exp \left(-20\left(\left(x - \frac{1}{5}\right)^2 + \left(y - \frac{1}{5}\right)^2\right)\right) &\text{with} &z_1 = 1, \\
	c_2^0 = \frac{1}{2 \pi} \exp \left(-20\left(\left(x + \frac{1}{5}\right)^2 + \left(y + \frac{1}{5}\right)^2\right) \right) &\text{with} &z_2 = -1.
	\end{array}
	\right.
	\end{equation}
	The computation domain is $[-L, L] \times [-L, L], \ L = 1$ and let the mesh size $\Delta x_j = \Delta  x_0 2^{-j}, j = 0, 1, 2, 3$ with $\Delta x_0 = 0.04$ 
	and time step $\Delta t$ be $10^{-5}$. Fig \ref{im1b} shows the results of error $\tilde{\boldsymbol{e}}_{\Delta x, \Delta t}$ in $l^{\infty}, l^1$ and $l^2$ norms at time $t = 0.01$ where
	\begin{align}
	&\|\tilde{\boldsymbol{e}}_{\Delta x, \Delta t}\|_{l^{\infty}} := \max_{m, j} \max_{x \in C_j}\left|\left(c_{\Delta x, \Delta t}\right)_{m}(x) - \left(c_{2\Delta x, \Delta t}\right)_{m}(x)\right|, \\
	&\|\tilde{\boldsymbol{e}}_{\Delta x, \Delta t}\|_{l^p} := \left( \sum_{m = 1}^{M} \sum_{j} \int_{C_j} \left|\left(c_{\Delta x, \Delta t}\right)_{m}(x) - \left(c_{2\Delta x, \Delta t}\right)_{m}(x)\right|^p \,\mathrm{d} x \right)^{\frac{1}{p}}, ~p = 1, 2, 
	\end{align}	
	thus we can conclude that the two-dimensional scheme is of second order in space as well. 
	\begin{figure}[htp]
		\subfigure{
			\begin{minipage}[t]{0.5\linewidth}
				\centering
				\includegraphics[width=1.0\linewidth]{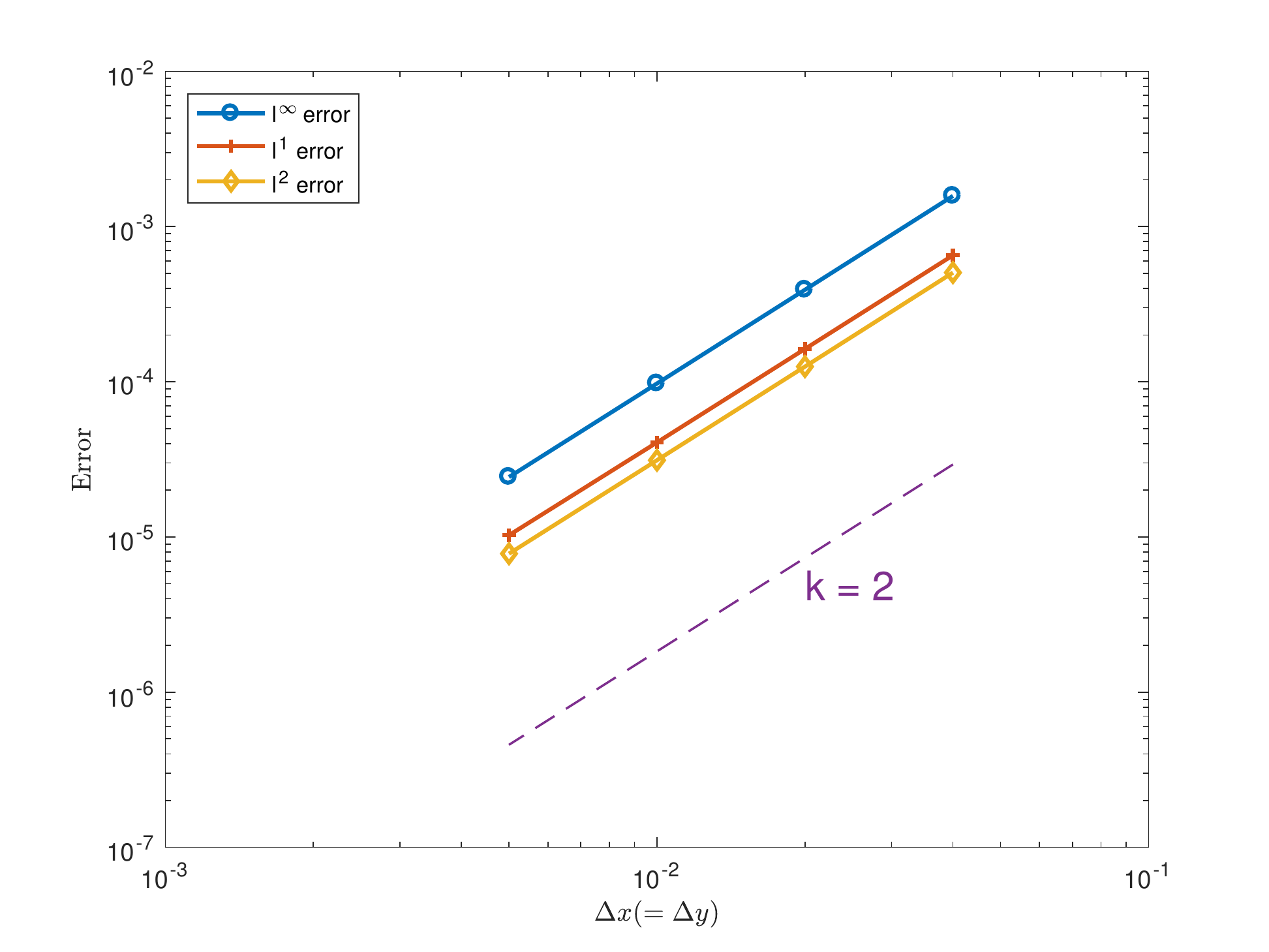}
			\end{minipage}
		}
		\centering
		\caption{{\bf Second order convergence in space in 2D.} The loglog plot of errors with mesh size $\Delta x_j = \Delta  x_0 2^{-j}, j = 0, 1, 2, 3$ with $\Delta x_0 = 0.04$ , at time $t = 0.01$ ($\Delta t = 10^{-5}$).}
		\label{im1b}
	\end{figure}
	
	\subsection{Numerical Experiments in One-dimension}
	
	Consider the equations (\ref{model2a}) in one-dimension with the singular kernel $\mathcal{W}(x) = \frac{\eta}{|x|^{1/2}}$, non-singular kernel $\mathcal{K}(x) = \exp(-|x|)$, the added external potential $V_{\text{ext}}(x) = 10 x^2$ and the initial conditions (\ref{model2c}) are given by (\ref{iniex1}). The kernel $\mathcal{W}(x)$ corresponds to the steric repulsion arising from the finite size and this model can be viewed as a modified PNP model where there are the additional nonlocal repulsion and external potential. We remark that the numerical example in this subsection is also a toy model because the electrostatic kernel $\mathcal{K}(x)$ in one-dimension is not physically relevant. Like the previous one-dimensional example, this example serves the purpose of exploring modeling phenomena, such as the finite size effect, the concentration of the ions at the boundary, etc. 
	
	\subsubsection{Steady State} 
	In this part, we study the steady state of this example. Take the parameter $\eta = 1$, the computation domain as $[-L, L], \ L = 1$ and the uniform mesh size $\Delta x = 0.001, \Delta t = 0.0001$.  Then Fig \ref{3c2} shows the transport of the ionic species: the concentrations of the positive ions and the negative ions move towards each other due to the electrostatic attraction with time $t$ and the density functions converge to the equilibrium.
	
	\begin{figure}[htp] 
		\subfigure{
			\begin{minipage}[t]{0.3\linewidth}
				\centering
				\includegraphics[width=1.0\linewidth]{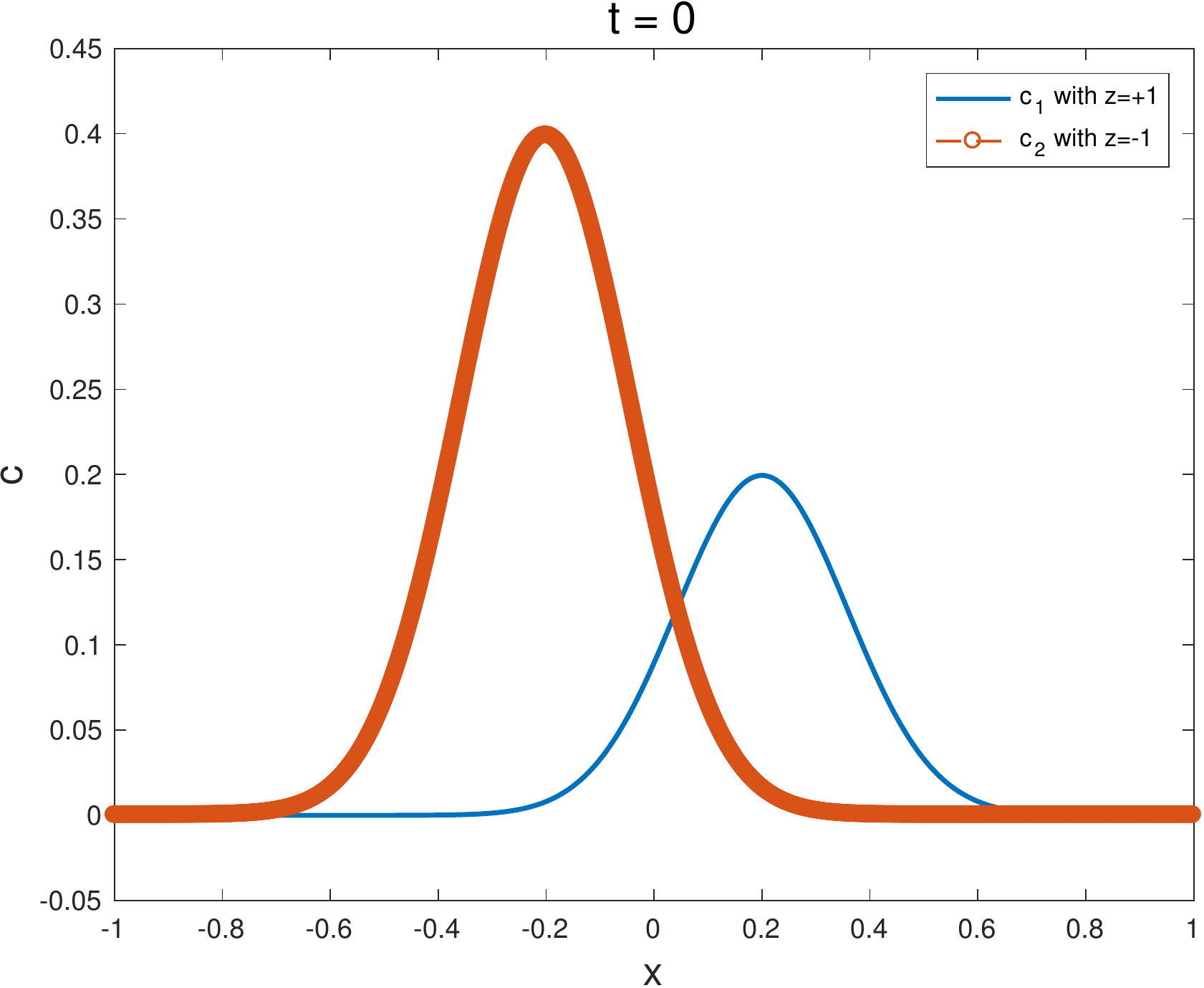}
			\end{minipage}
			\begin{minipage}[t]{0.3\linewidth}
				\centering
				\includegraphics[width=1.0\linewidth]{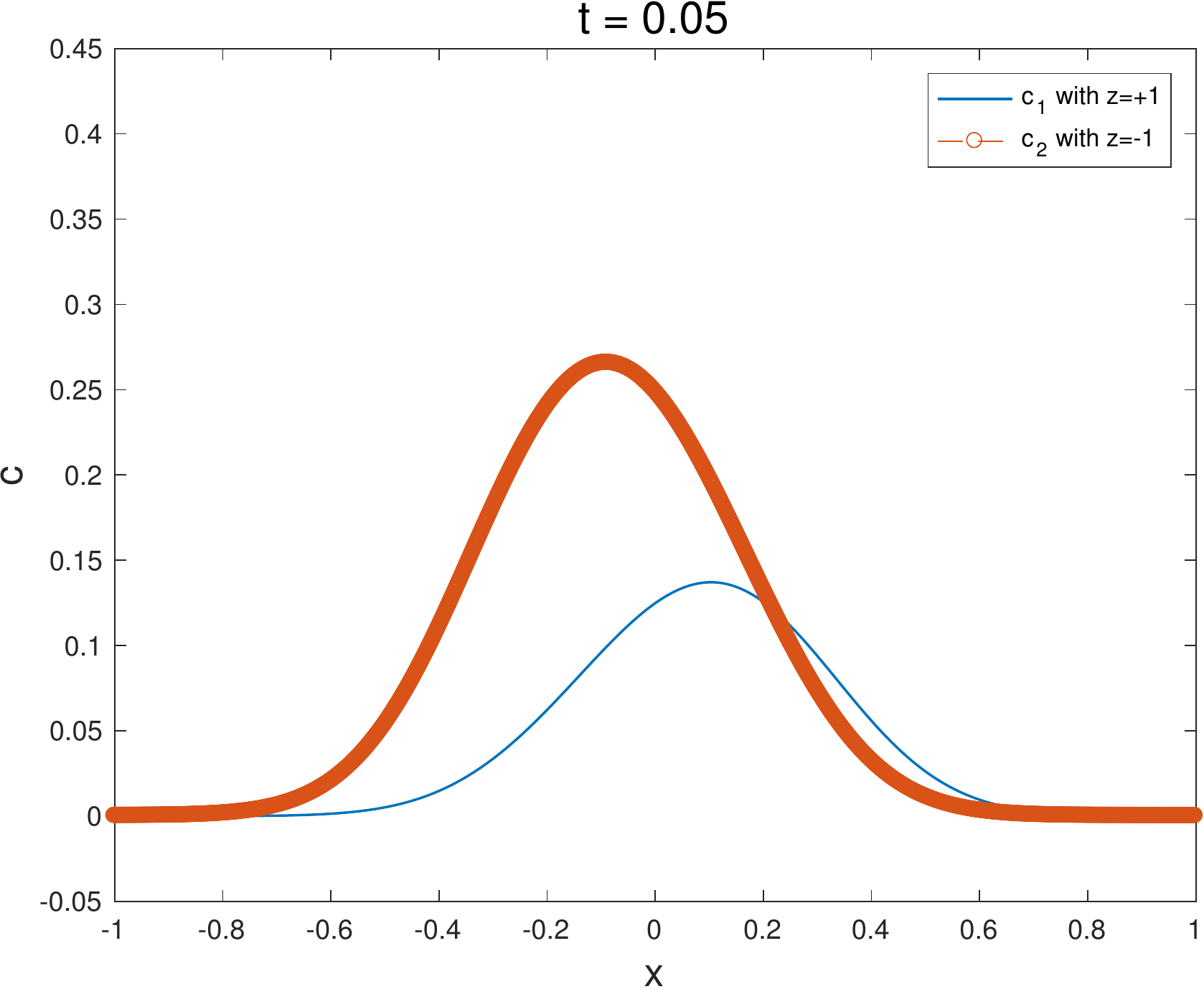}
			\end{minipage}
			\begin{minipage}[t]{0.3\linewidth}
				\centering
				\includegraphics[width=1.0\linewidth]{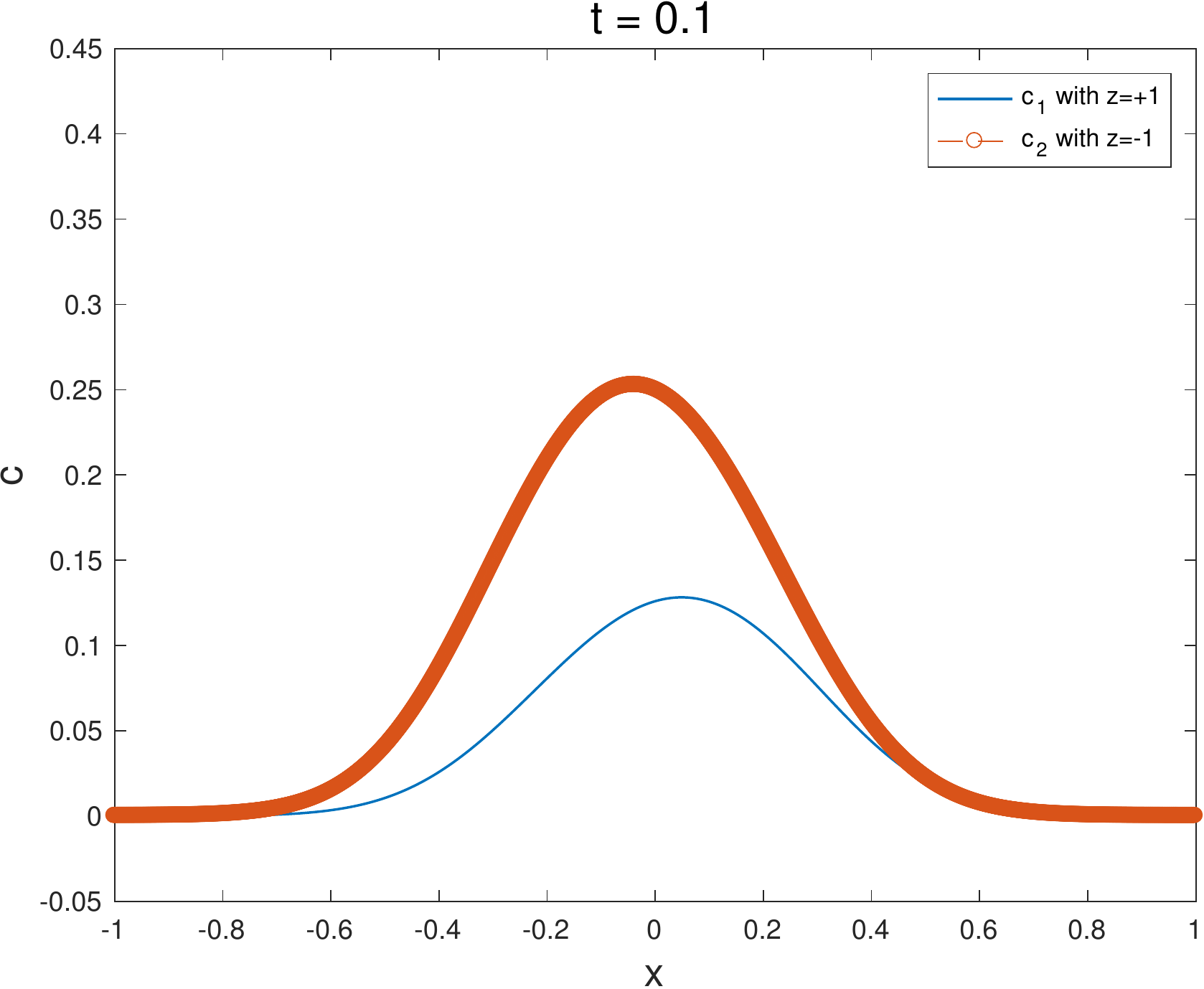}
			\end{minipage}
		}
		\subfigure{
			\begin{minipage}[t]{0.3\linewidth}
				\centering
				\includegraphics[width=1.0\linewidth]{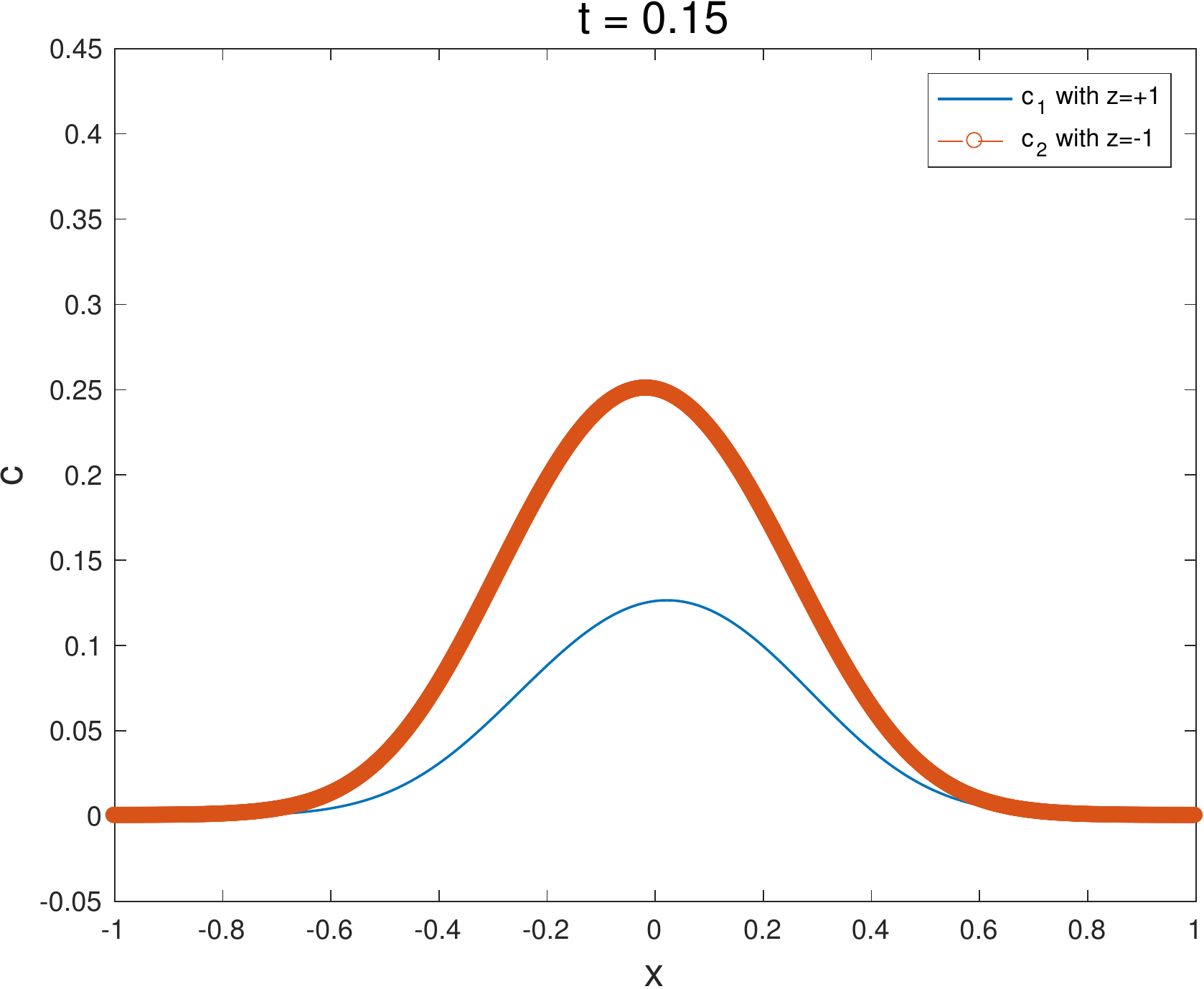}
			\end{minipage}
			\begin{minipage}[t]{0.3\linewidth}
				\centering
				\includegraphics[width=1.0\linewidth]{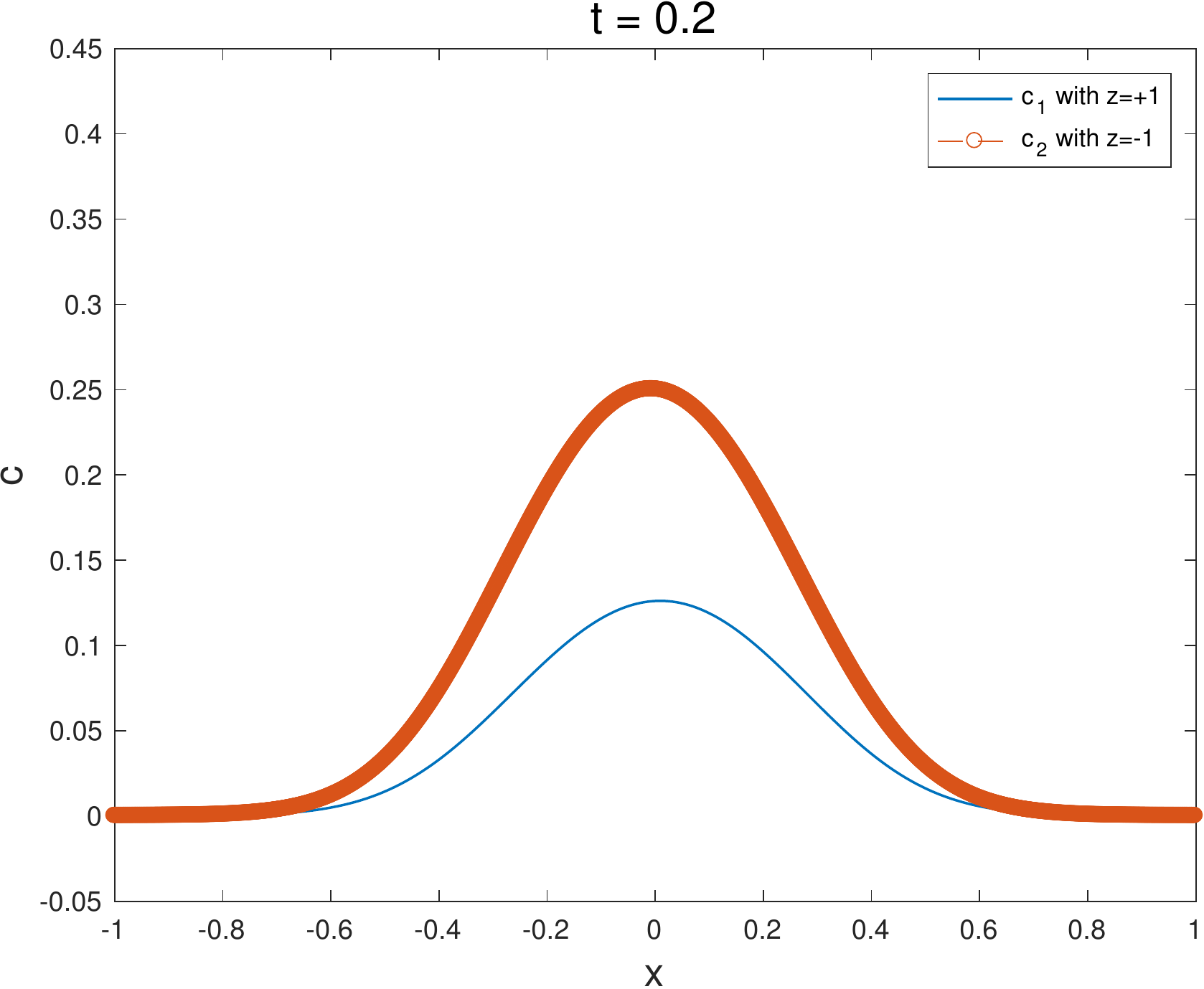}
			\end{minipage}
			\begin{minipage}[t]{0.3\linewidth}
				\centering
				\includegraphics[width=1.0\linewidth]{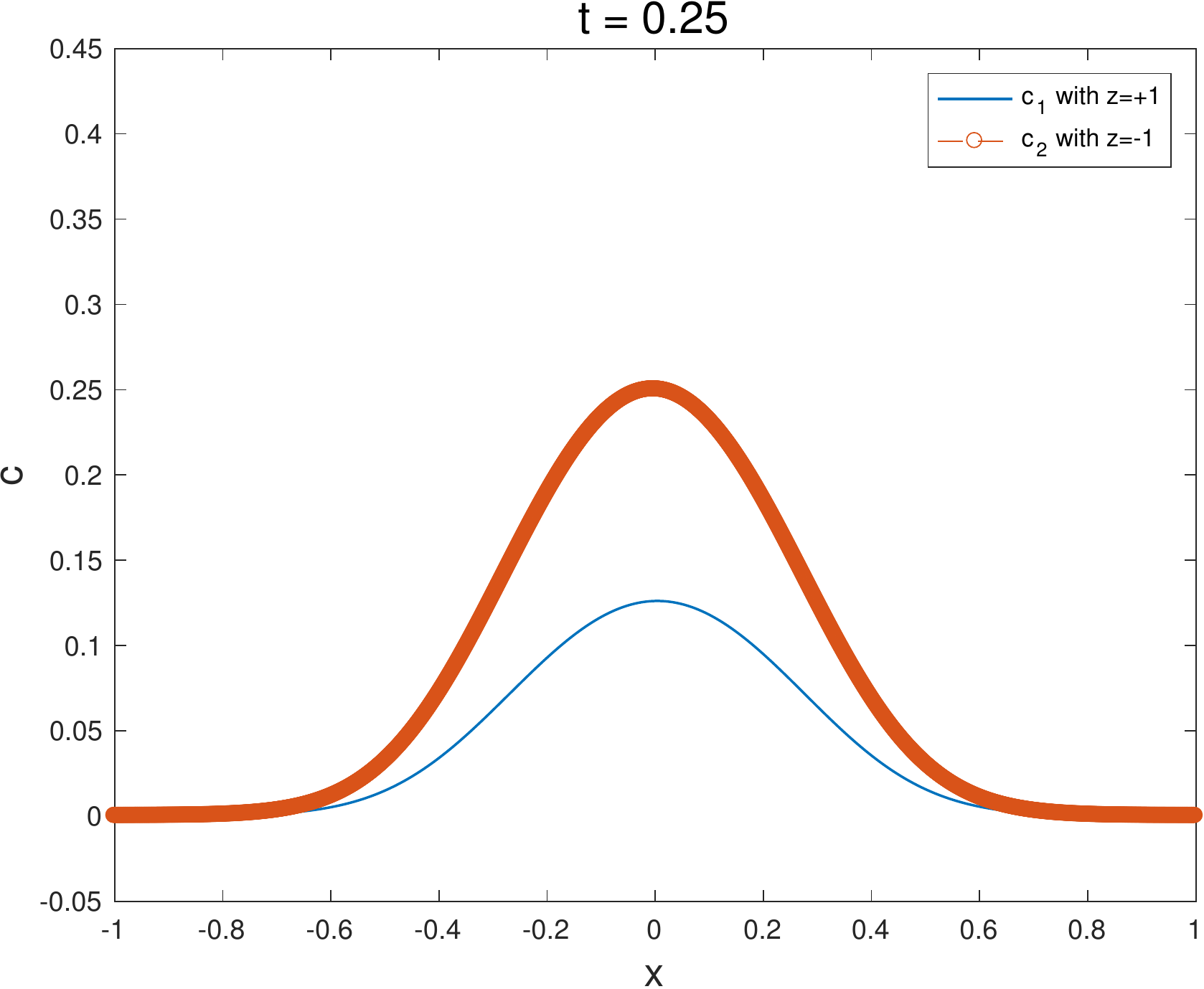}
			\end{minipage}
		}
		\subfigure{
			\begin{minipage}[t]{0.3\linewidth}
				\centering
				\includegraphics[width=1.0\linewidth]{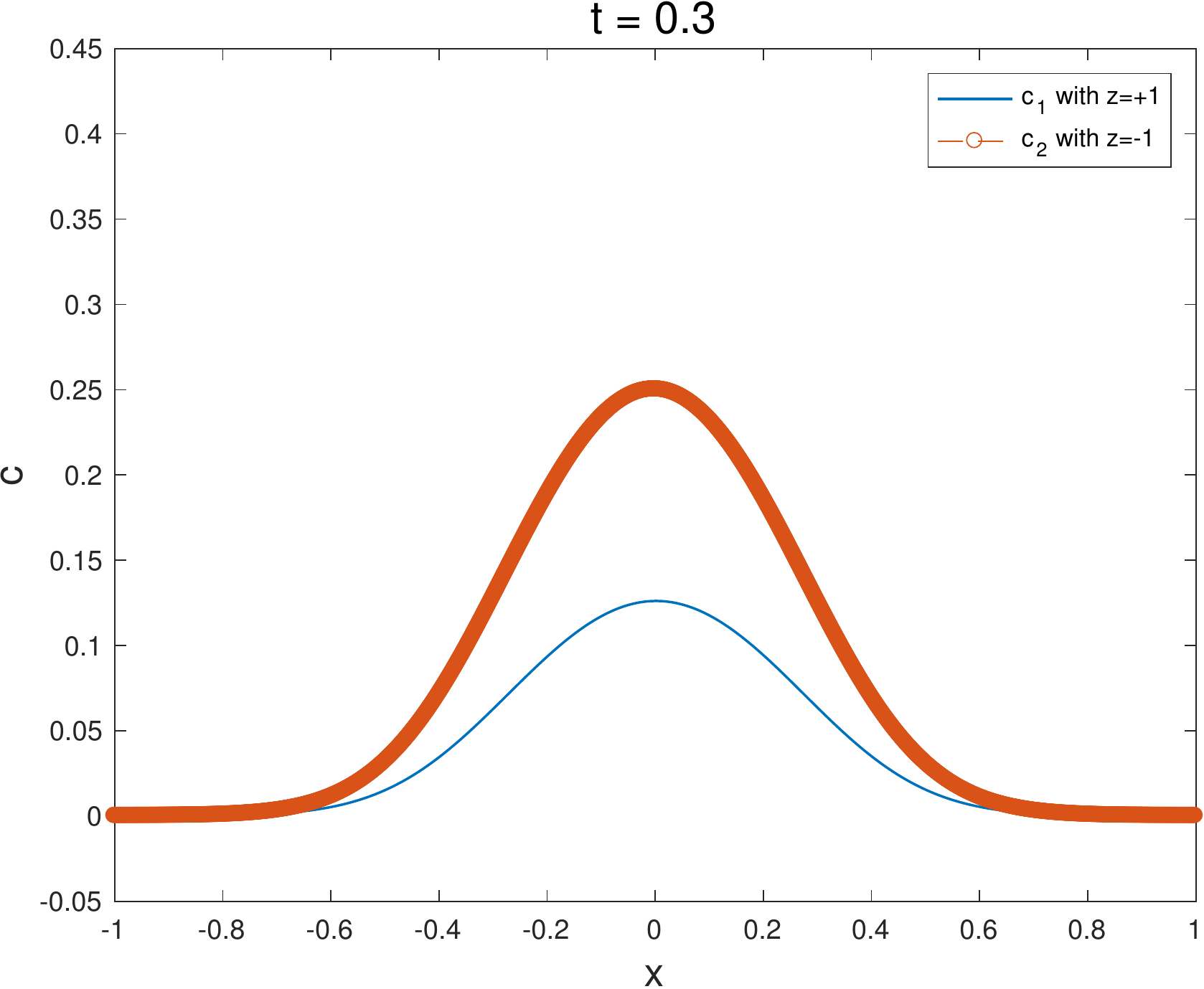}
			\end{minipage}
			\begin{minipage}[t]{0.3\linewidth}
				\centering
				\includegraphics[width=1.0\linewidth]{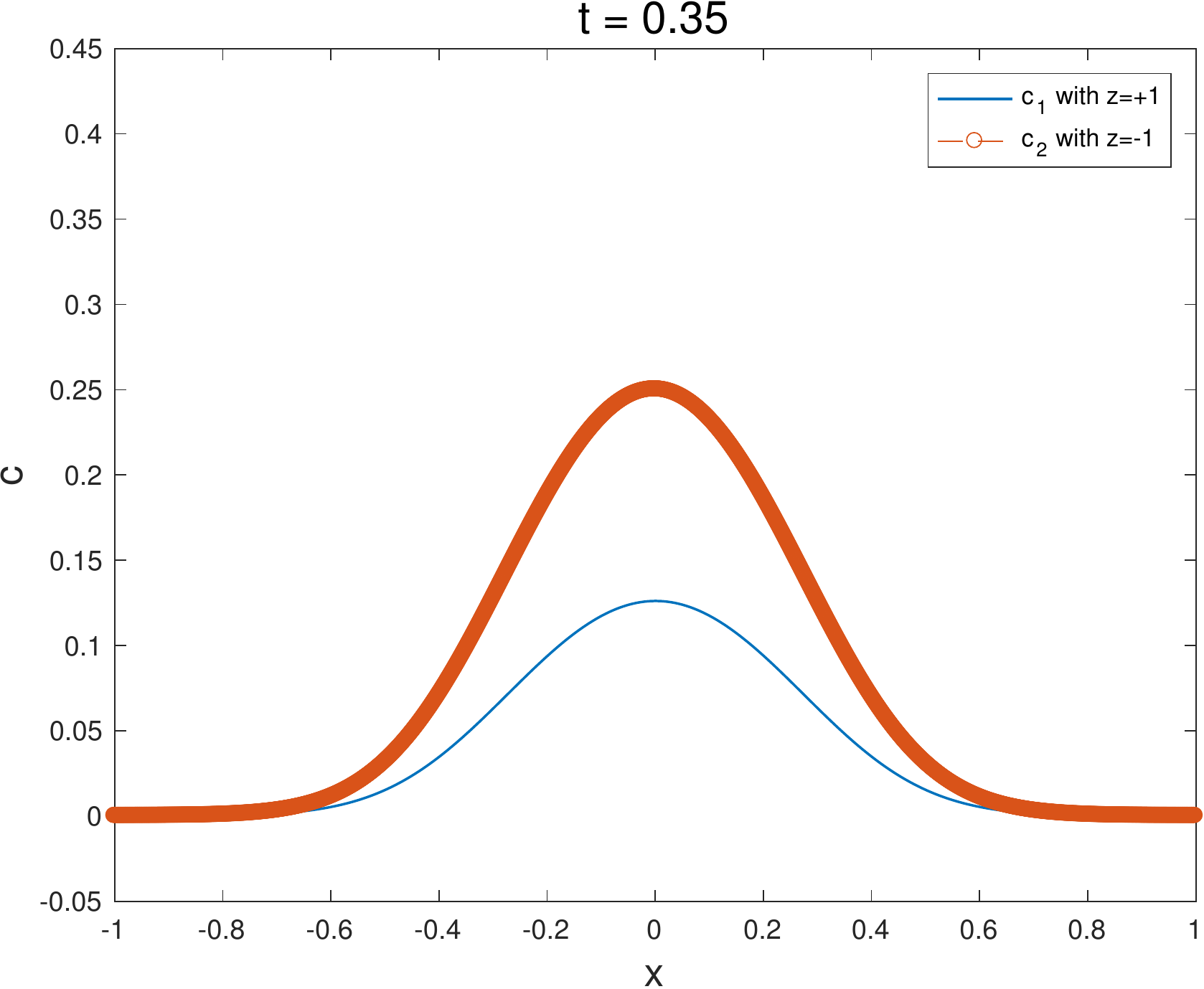}
			\end{minipage}
			\begin{minipage}[t]{0.3\linewidth}
				\centering
				\includegraphics[width=1.0\linewidth]{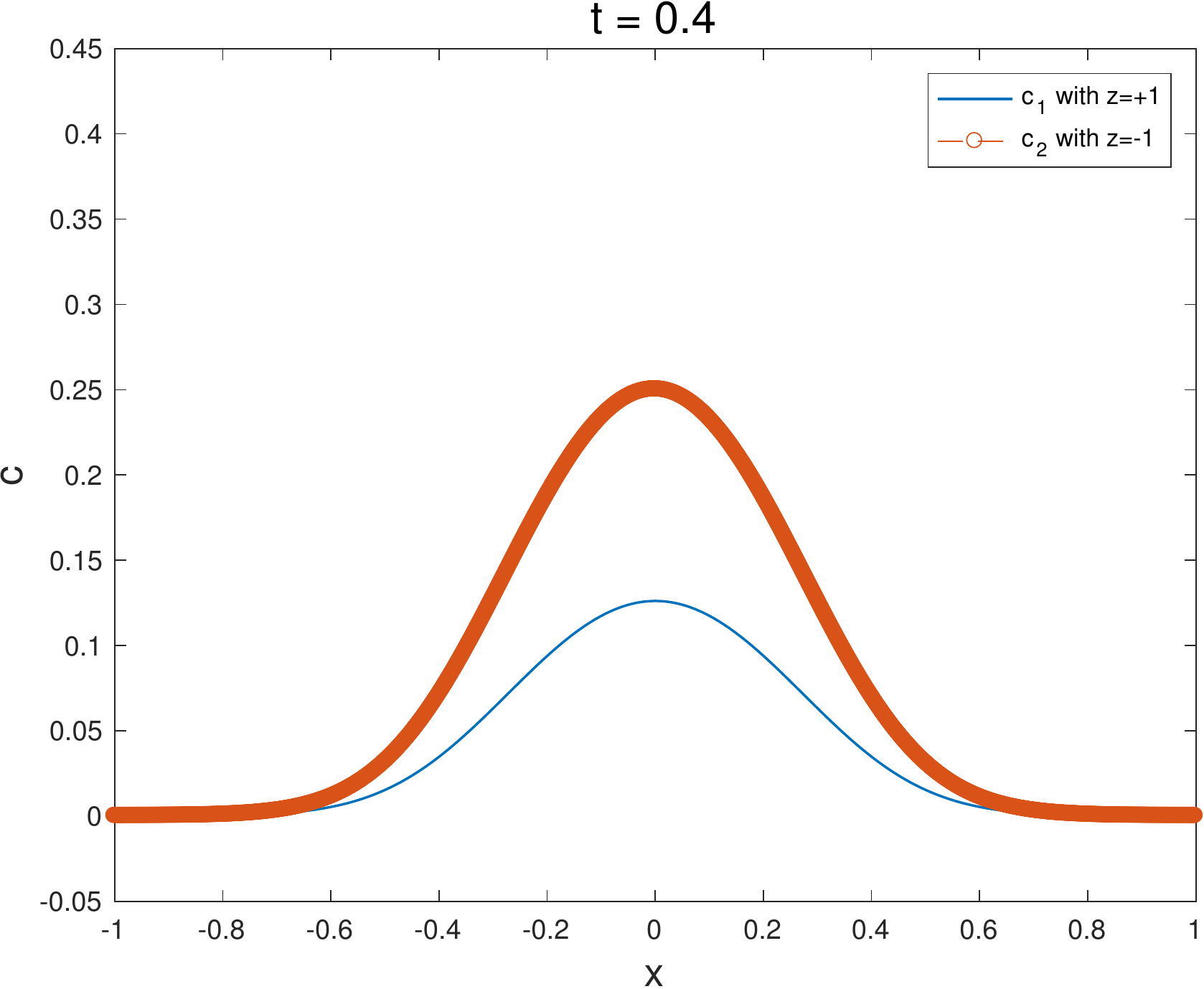}
			\end{minipage}
		}
		\centering
		\caption{Multiple Species Example in One-dimension: The space-concentration curves with the mesh size $\Delta x$ being 0.001 and $\Delta t$ being 0.0001 and time $t$ changing from 0 to 0.4.}
		\label{3c2}
	\end{figure}
	
	Additionally, Fig \ref{im2}.Left shows how the discrete form of the energy $\mathcal{F}$ defined in (\ref{disenergyb}) changes with time $t$ and 
	Fig \ref{im2}.Right shows the chemical potential $\mu_m, \ m = 1, 2,$ at time $t = 0.4$. It's observed that the discrete free energy decays and the chemical potential $\mu_m$ goes to a constant while the field model goes to the equilibrium for all $m$. The results are consistent with the analytical results in \cite{Liu2019}.
	
	\begin{figure}[htp] 
		\subfigure{
			\begin{minipage}[t]{0.5\linewidth}
				\centering
				\includegraphics[width=1.0\linewidth]{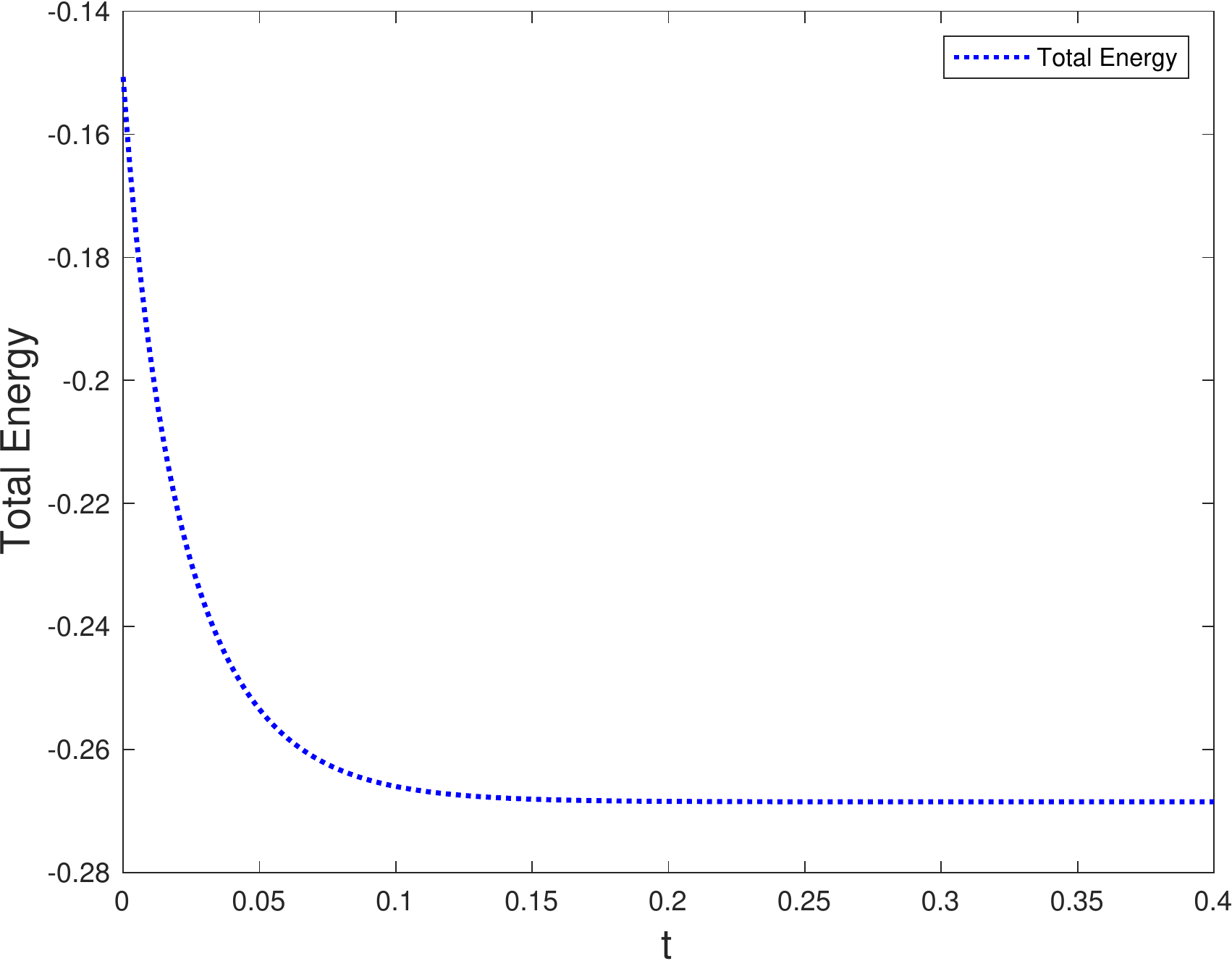}
			\end{minipage}
			\begin{minipage}[t]{0.5\linewidth}
				\centering
				\includegraphics[width=1.0\linewidth]{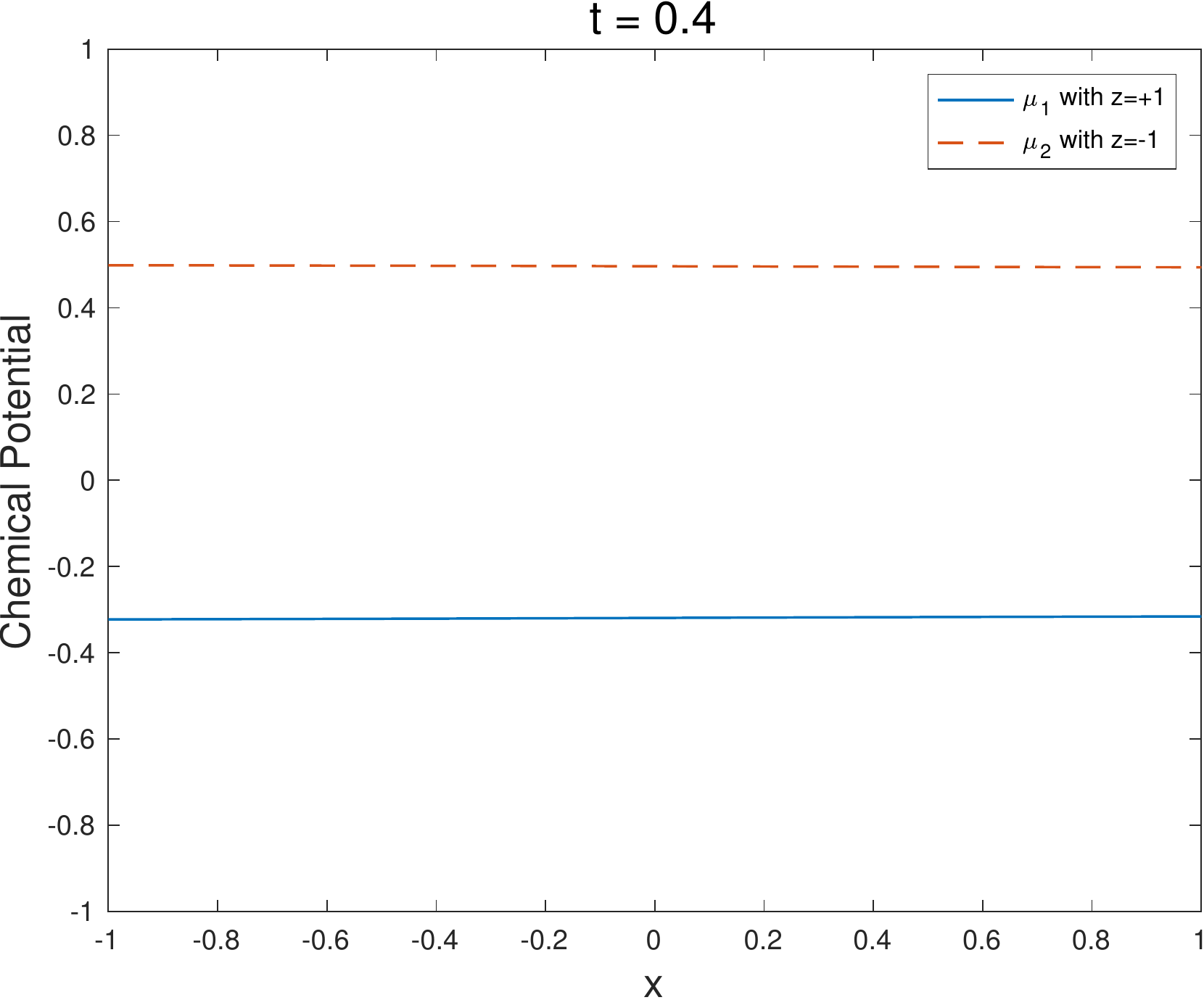}
			\end{minipage}
		}
		\centering
		\caption{Multiple Species Example in One-dimension: Left: {\bf Energy decay.} The time-energy plot of the field model (\ref{model2a}) equiped with the initial conditions (\ref{iniex1}) with the mesh size $\Delta x$ being 0.001 and  $\Delta t$ being 0.0001. Right: {\bf Constant chemical potential.} Discrete chemical potential at time $t = 0.4$ with the mesh size $\Delta x$ being 0.001 and  $\Delta t$ being 0.0001.}
		\label{im2}
	\end{figure}
	
	\subsubsection{Finite Size Effect} As we mentioned in the beginning of the paper, the kernel $\mathcal{W}(x)$ in the model (\ref{model2a})-(\ref{model2c}) represents the steric repulsion arising from the finite size and the corresponding potential function $\Phi_{\mathcal{W}}(x)$ is taken as $\Phi_{\mathcal{W}}(x) = \left(\mathcal{W} * \theta\right)(x)$. Notice that the strength of the kernel $\mathcal{W}(x)$ is indicated by the parameter $\eta$, which means the larger $\eta$ is, the stronger the nonlocal steric repulsion effect is, and thus the less peaked the concentrations of the steady state are. And $\eta = 0$ means steric repulsion vanishes. Here we aim to explore this phenomenon by different values of the parameter $\eta$.
	Let $\eta = 16, 4, 1, \frac{1}{16}, \frac{1}{128}, 0$, the mesh size $\Delta x = 0.0025$ and $\Delta t = 0.001$, Fig \ref{eta2} shows different steady state solutions with different $\eta$, where we can find that the finite size effect ($\eta \ne 0$) makes the concentrations $c_m, \ m = 1, 2,$ not overly peaked and we can verify that the nonlocal field induced by $\mathcal{W}$ effectively captures the steric repulsion arising from the finite size of the particles.  The numerical result is consistent with that in \cite{liu2010, Liu2019}.  
	
	\begin{figure}[htp] 
		\subfigure{
			\begin{minipage}[t]{0.3\linewidth}
				\centering
				\includegraphics[width=1.0\linewidth]{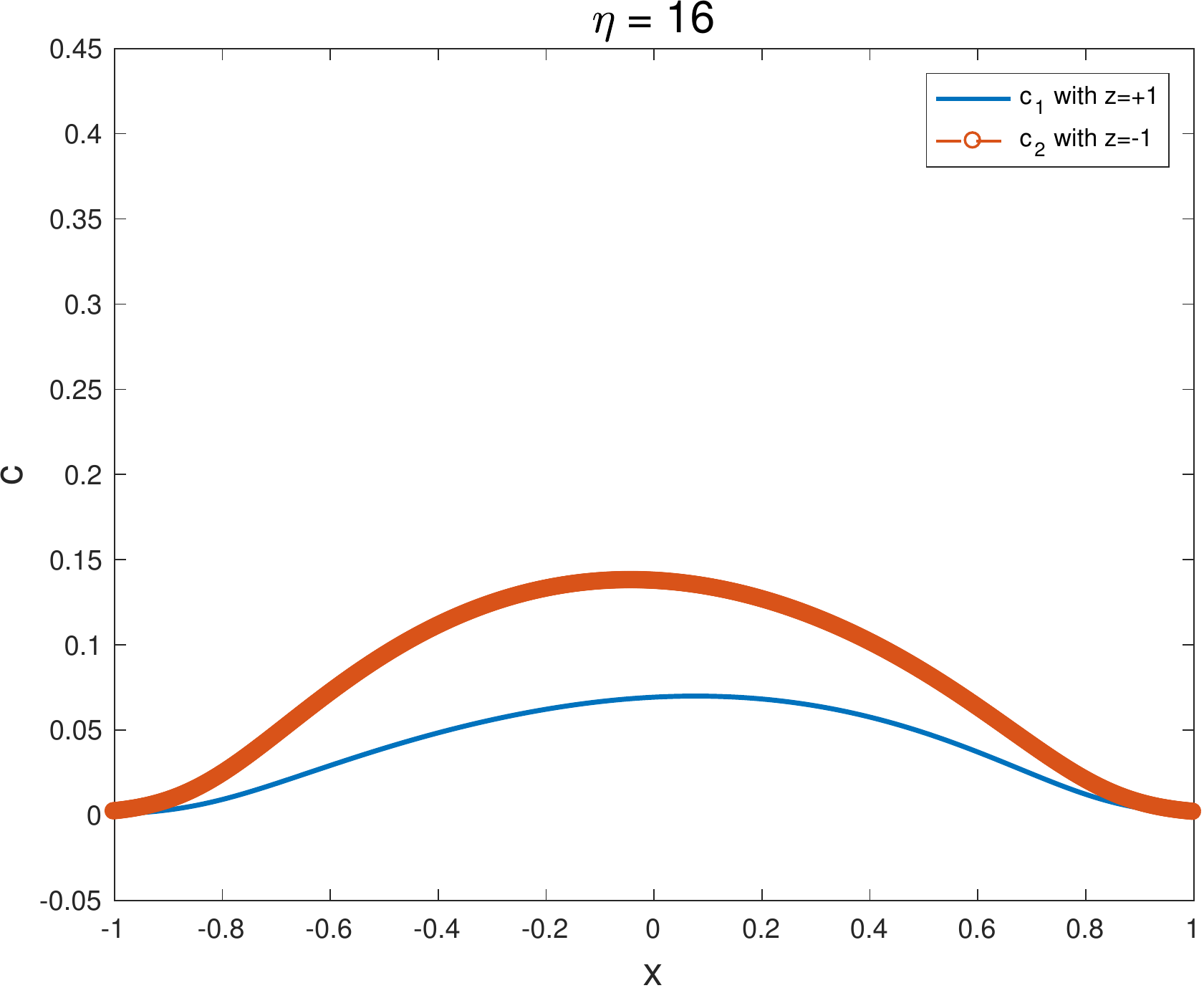}
			\end{minipage}
			\begin{minipage}[t]{0.3\linewidth}
				\centering
				\includegraphics[width=1.0\linewidth]{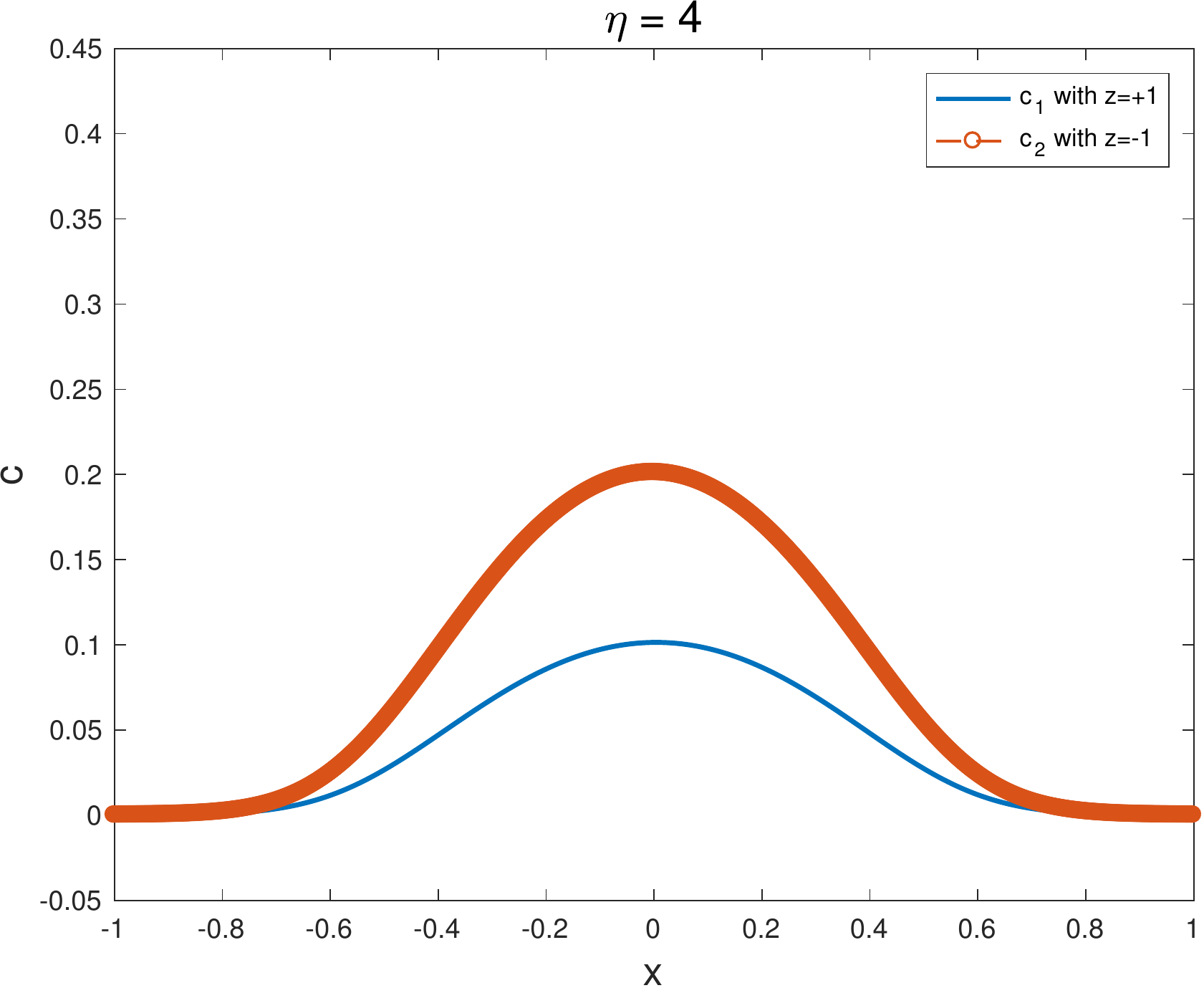}
			\end{minipage}
			\begin{minipage}[t]{0.3\linewidth}
				\centering
				\includegraphics[width=1.0\linewidth]{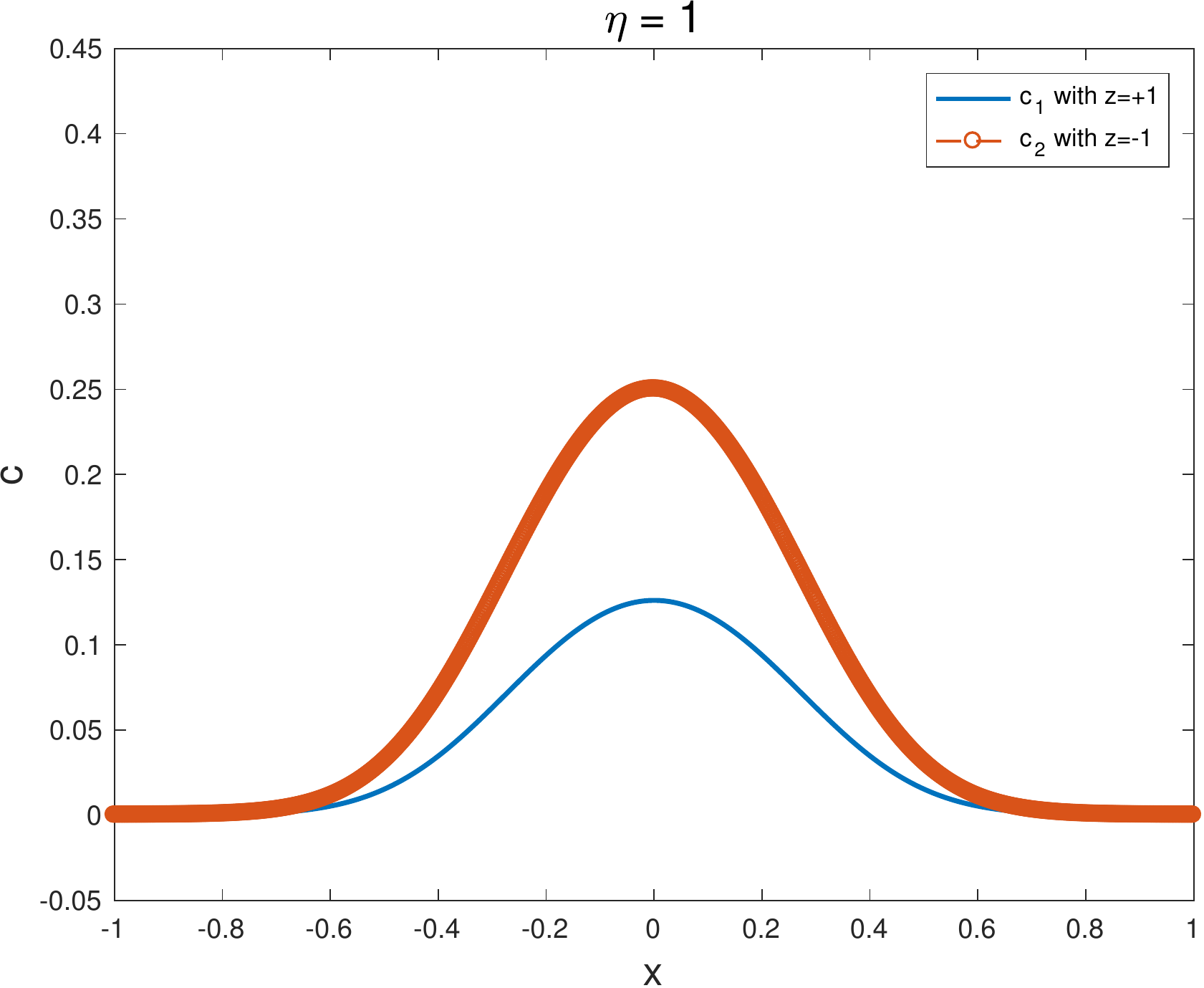}
			\end{minipage}
		}
		\subfigure{
			\begin{minipage}[t]{0.3\linewidth}
				\centering
				\includegraphics[width=1.0\linewidth]{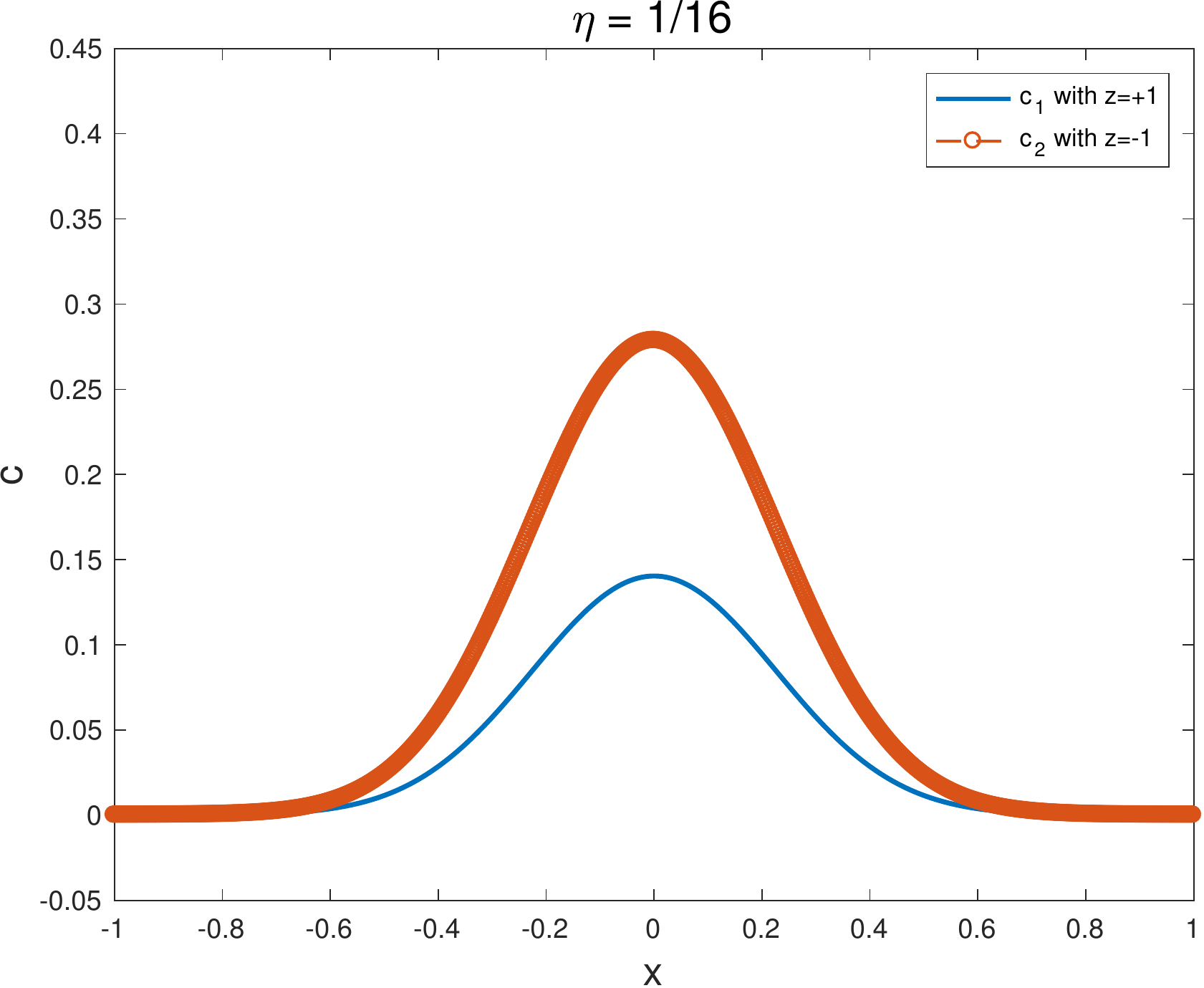}
			\end{minipage}
			\begin{minipage}[t]{0.3\linewidth}
				\centering
				\includegraphics[width=1.0\linewidth]{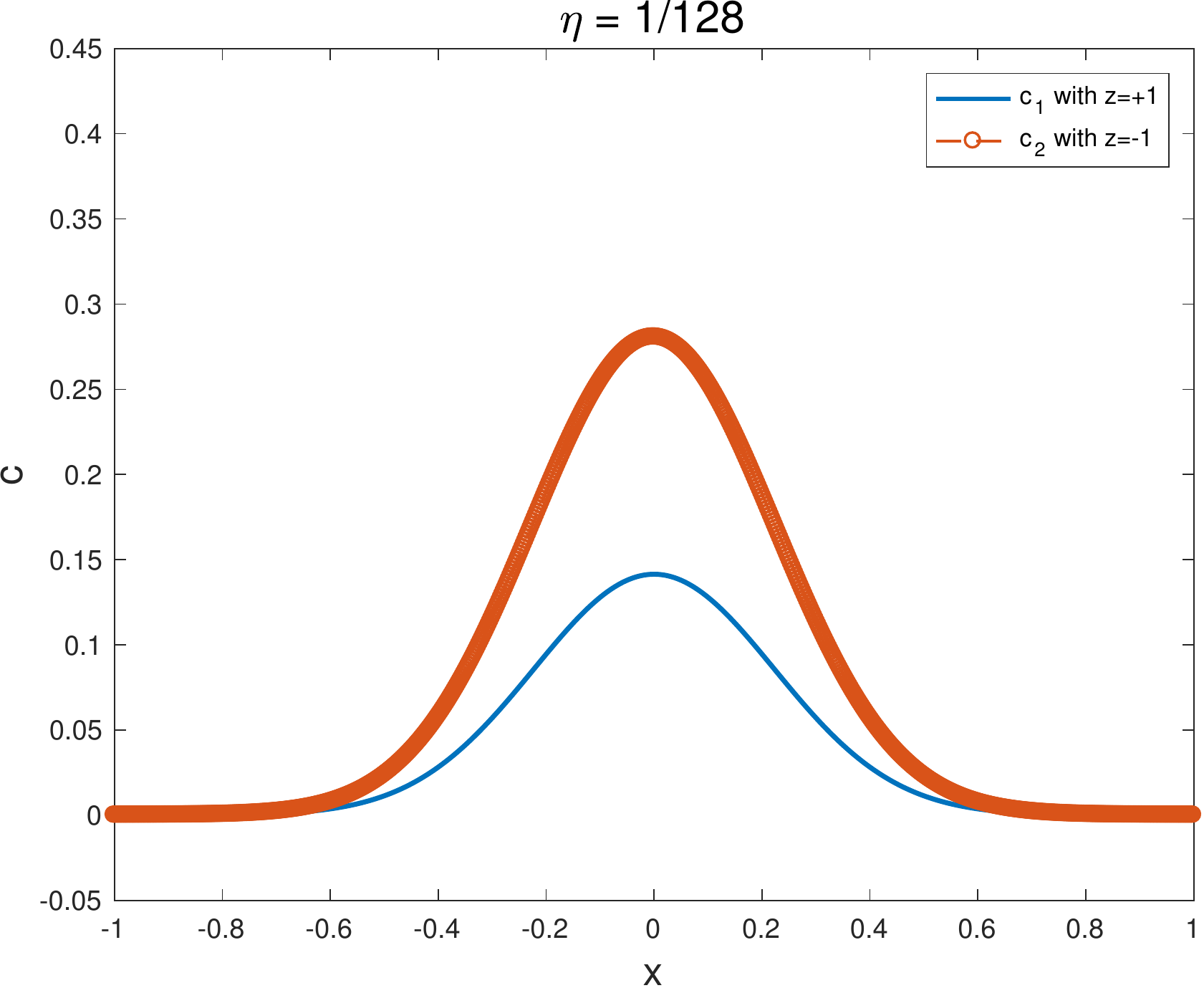}
			\end{minipage}
			\begin{minipage}[t]{0.3\linewidth}
				\centering
				\includegraphics[width=1.0\linewidth]{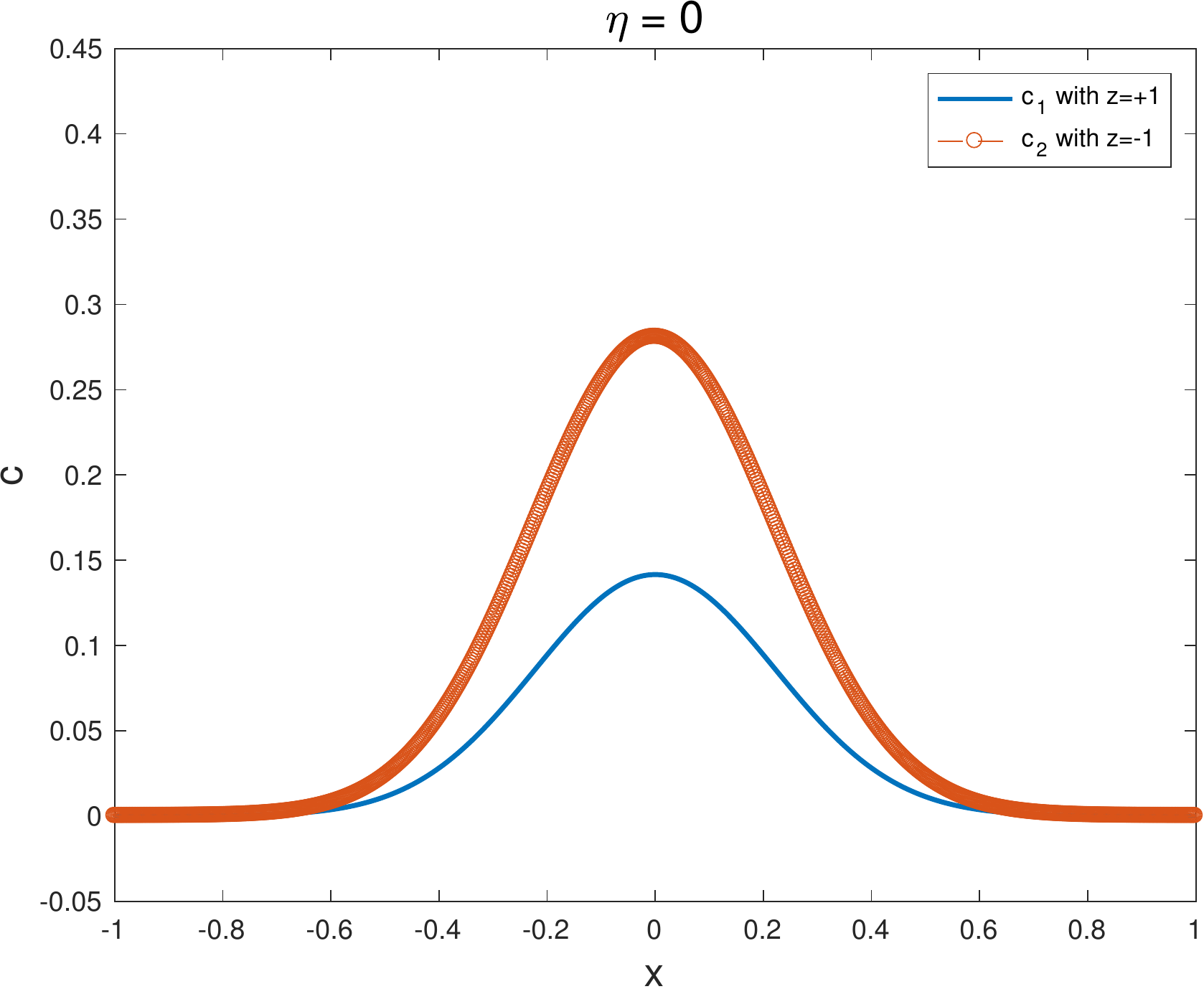}
			\end{minipage}
		}
		\caption{Multiple Species Example in One-dimension: {\bf Finite size effect.} The steady state density solutions $c_m$ with $\eta = 16, 4, 1,\frac{1}{16}, \frac{1}{128}, 0$.}
		\label{eta2}
	\end{figure}
	
	\subsubsection{Boundary Value Problem 1}
	Consider a new boundary value problem: equations (\ref{model2a}) in one-dimension with the kernel $\mathcal{W}(x) = \frac{\eta}{|x|^{1/2}}$, $\mathcal{K}(x) = \exp(-|x|)$ and the initial conditions (\ref{model2c})  given by (\ref{iniex1}).
	Instead of giving a confining external potential, we add a constant  electric field whose field intensity is 10 to the solutions, i.e. the field system (\ref{model2a})-(\ref{model2c}) becomes
	\begin{equation}
	\begin{aligned}
	\partial_t c_m (x, t) &= \nabla \cdot \left( \exp\{- \left( z_m \mathcal{K}*\rho + \mathcal{W}*\theta + z_m V_{0} \right) \} \nabla \frac{c_m}{\exp\{- \left( z_m \mathcal{K}*\rho + \mathcal{W}*\theta + z_m V_{0} \right) \}} \right), 
	\\
	c_m(x, 0) &= c^0_{m}(x), ~~m = 1, \cdots, M,
	\end{aligned}
	\end{equation}
	where $V_{0} = 10 x$ and no flux boundary is considered. 
	
	Denote the electric potential energy of the m-th species by  $\Phi_{0} = z_m V_{0}$, then the corresponding velocity $v_0 = -\partial_{x} \Phi_{0} = -10 z_m$. For positive electric charges, $v_0 < 0$, which means positively charged ions are driven towards the left boundary while for negative electric charges, $v_0 > 0$, negatively charged ions are driven towards the right boundary.  Next we aim to investigate this phenomenon numerically that such electric field can make positive and negative electric charges gather on different ends and how the nonlocal repulsion modifies the profile of the steady states.
	
	Take $\eta = 0, 1, 2, 4$, the computation domain as $[-L, L], \ L = 1$, Fig \ref{ex3c1} shows the behavior of the ionic species on domain $[-L, L]$ with the mesh size $\Delta x = 0.0025$ and $\Delta t = 0.001$ at $t = 1$. From Fig \ref{ex3c1} we can make conclusions that for given $\eta$,  the concentrations of different ionic species accumulate at different boundaries and the repulsion with large $\eta$ flattens and thickens the boundary layer.
	
	\begin{figure}[htp] 
		\subfigure[]{ 
			\includegraphics[width=0.45\textwidth]{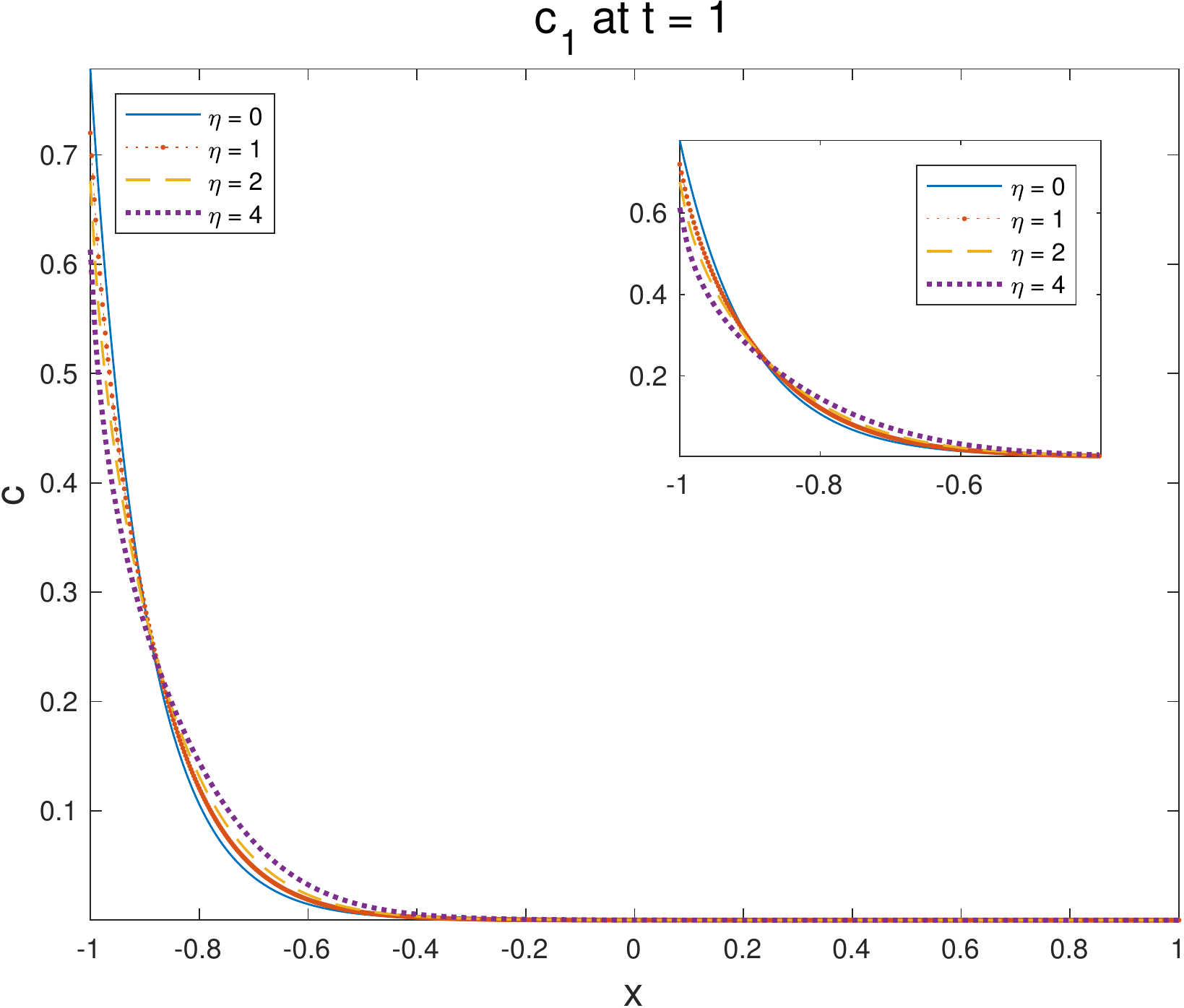}} 
		\subfigure[]{ 
			\includegraphics[width=0.45\textwidth]{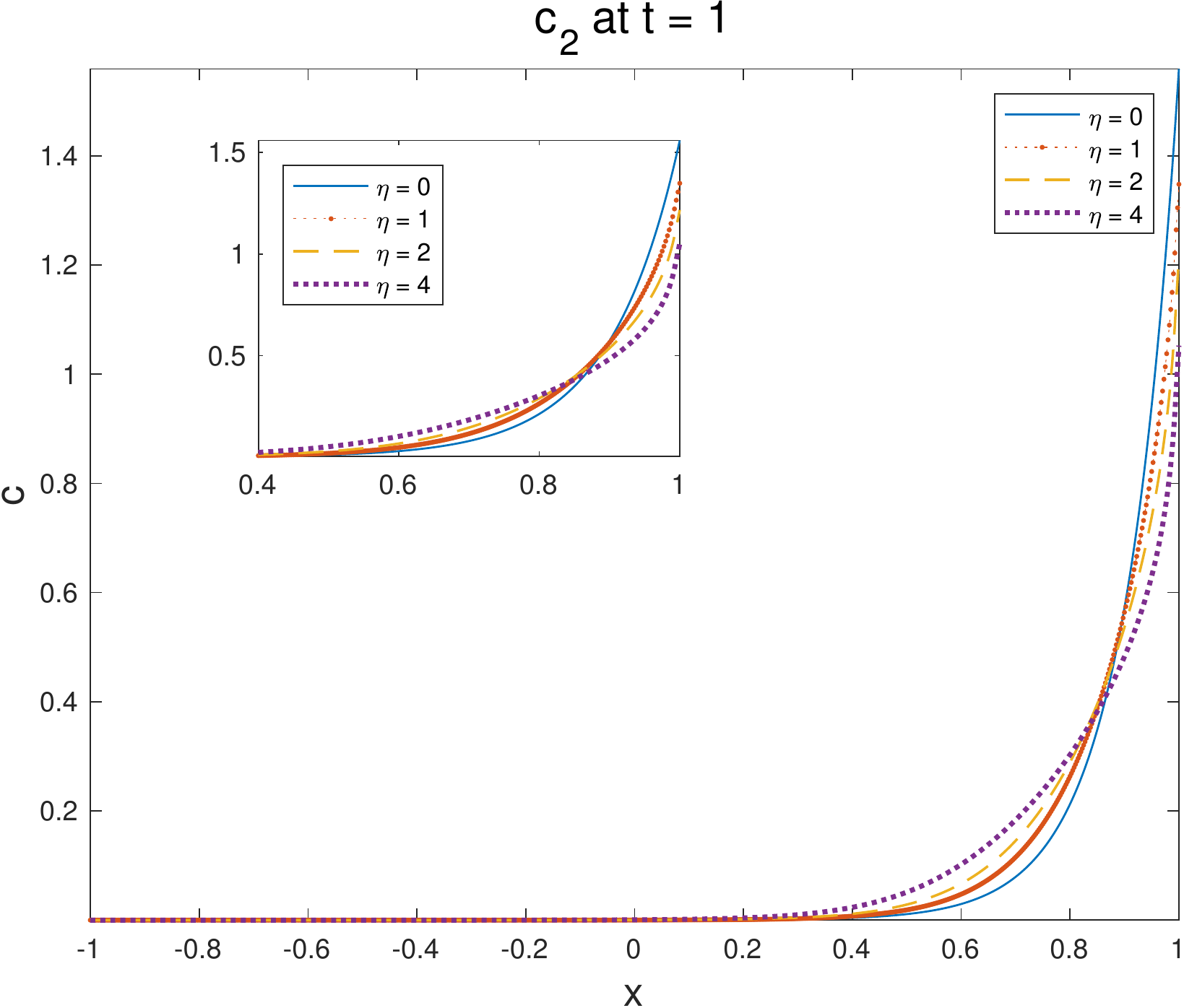} 
		}
		\caption{Boundary Value Problem 1: (a): $c_1$ with different $\eta$ at time $t = 1$. (b): $c_2$ with different $\eta$ at time $t = 1$.}
		\label{ex3c1}
	\end{figure}
	
	\subsubsection{Boundary Value Problem 2}
	Similar to the numerical test in \cite{hu2019}, consider another boundary value problem: on domain $[-L, L]$, the parabolic equations (\ref{model2a}) in one-dimension with the singular kernel $\mathcal{W}(x) = \frac{\eta}{|x|^{1/2}}$, the initial conditions (\ref{model2c}) given by (\ref{iniex1}) and the electrostatic potential
	$\phi_{\mathcal{K}}(x) = \int _{-L}^{L} \mathcal{K}(x-y) \rho(y) \,\mathrm{d} y$ related to $c_m$ being determined by Gauss's law. Then our model becomes
	\begin{equation}
	\label{problem2}
	\begin{aligned}
	\partial_t c_m (x, t) &= \nabla \cdot \left( \exp\{-(z_m \phi_{\mathcal{K}} + \mathcal{W}*\theta)\} \nabla \frac{c_m}{\exp\{-(z_m \phi_{\mathcal{K}} + \mathcal{W}*\theta)\}} \right), 
	\\
	- \Delta \phi_{\mathcal{K}}(x, t) &= \rho, \\
	c_m(x, 0) &= c^0_{m}(x), ~~m = 1, \cdots, M, \\
	\alpha \phi_{\mathcal{K}}(-1, t) &- \beta \partial_x \phi_{\mathcal{K}}(-1, t) = -10, \\ \alpha \phi_{\mathcal{K}}(1, t) &+ \beta \partial_x \phi_{\mathcal{K}}(1, t) = 10, \quad \alpha = 1, \quad \beta = 0.01.
	\end{aligned}
	\end{equation}
	
	In this model, we compute the electrostatic potential $\phi_{\mathcal{K}}(x)$ via the Poisson equation instead of the convolution of the kernel ${\mathcal{K}}$ and the charge density $\rho$. The boundary conditions of the model (\ref{problem2}) make positive charged particles move to the left boundary and negative charged particles move to the right boundary.
	
	Here, take $\eta = 0, 1, 2, 4$, $\ L = 1$, Fig \ref{ex3c} shows the results of the concentration at the boundary on domain $[-L, L]$ with the mesh size $\Delta x = 0.0025$ and $\Delta t = 0.001$ at $t = 1$. It's observed that on the qualitative level, the solutions behaviors in this test agree with those of the previous test.
	
	\begin{figure}[htp] 
		\subfigure[]{ 
			\includegraphics[width=0.45\textwidth]{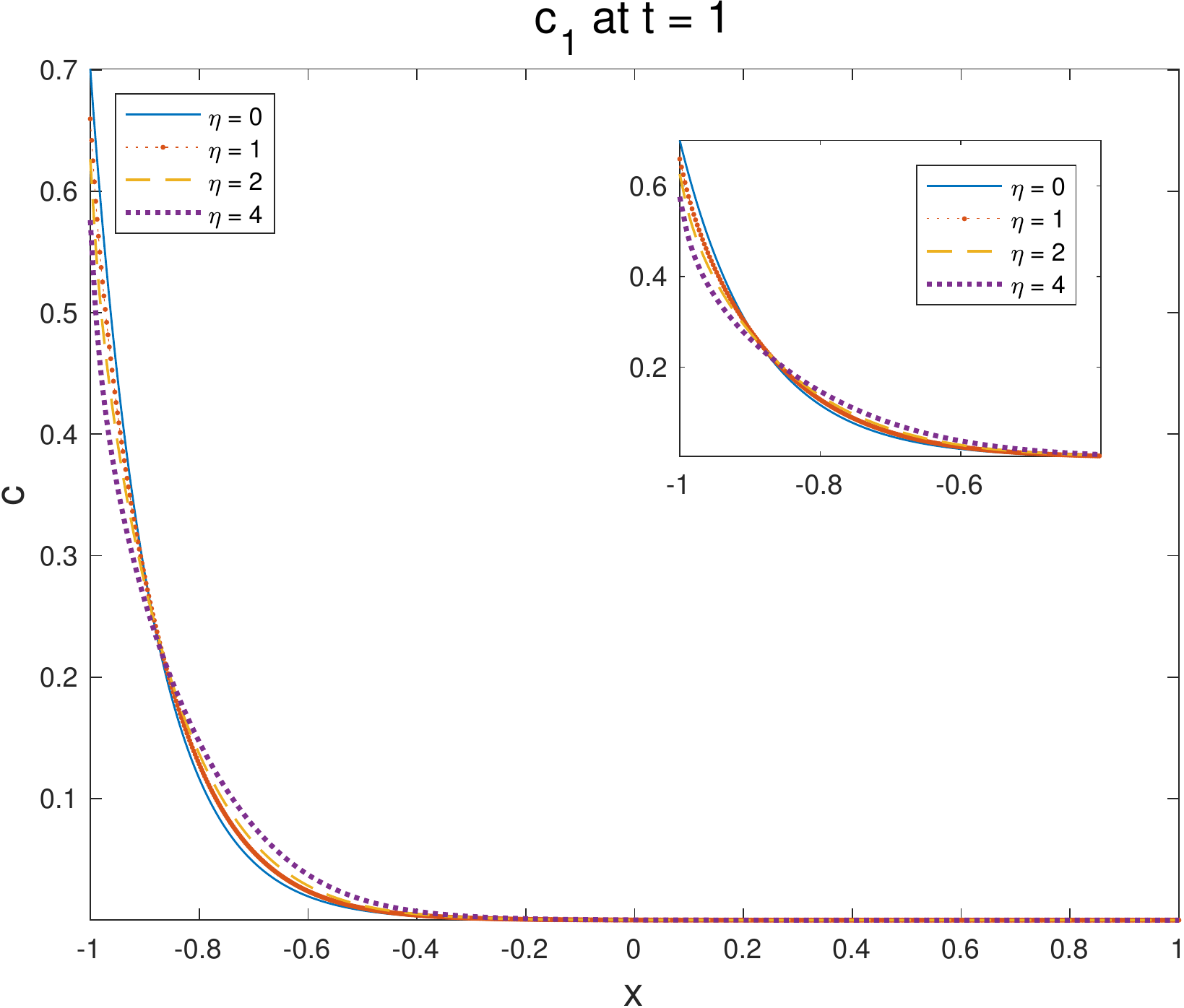}} 
		\subfigure[]{ 
			\includegraphics[width=0.45\textwidth]{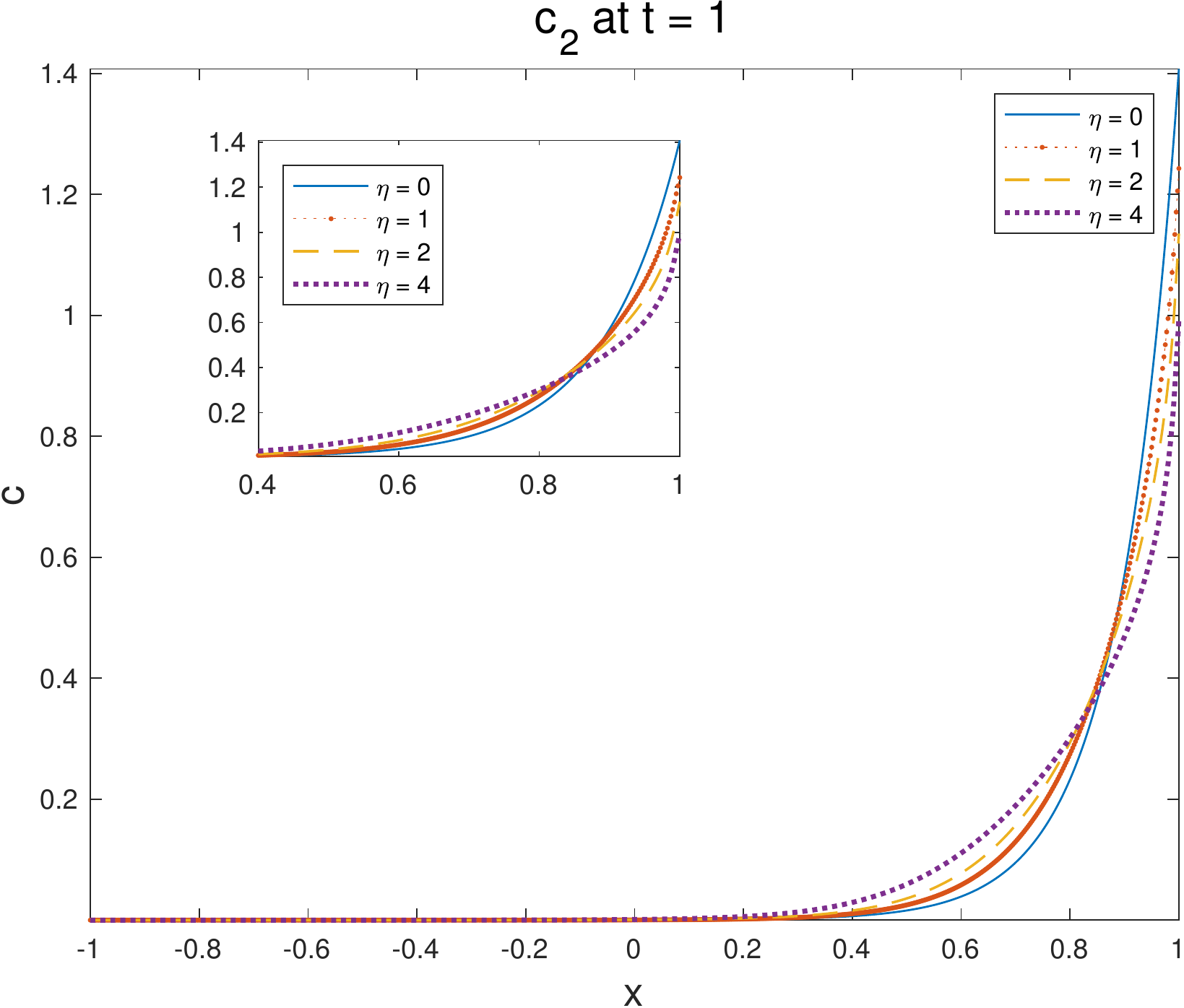} 
		}
		\caption{Boundary Value Problem 2: (a): $c_1$ with different $\eta$ at time $t = 1$. (b): $c_2$ with different $\eta$ at time $t = 1$.}
		\label{ex3c}
	\end{figure}
	
	\subsection{Numerical Experiments in Two-dimension}
	
	In this part, we consider the equations (\ref{model2a}) in two-dimension with the two-dimensional singular kernel $\mathcal{W}(x, y) = \frac{\eta}{r^{3/2}}, r = \sqrt{x^2 + y^2}$, the singular kernel $\mathcal{K}(x, y) = - \frac{1}{2 \pi} \ln r$, the external potential $V_{\text{ext}}(x, y) = 10 r^2$ and the initial conditions (\ref{model2c}) given by (\ref{iniex3}).
	We point out that $\mathcal{W}$ is repulsive and is more singular than $\mathcal{K}$, and it is a more realistic model, comparatively, $\mathcal{W}$ is dominating in the short range and $\mathcal{K}$ effectively determines the long range interaction. 
	
	On computation domain $[-L, L] \times [-L, L], \ L = 1$, we take $\eta = 1$, the mesh size $\Delta x = \Delta y = 0.02$ and $\Delta t = 0.0004$, Fig \ref{c1oft} shows the time evolution of the ion concentrations of $1$-th the ionic species.
	\begin{figure}[htp] 
		\subfigure[]{ 
			\includegraphics[width=0.3\textwidth]{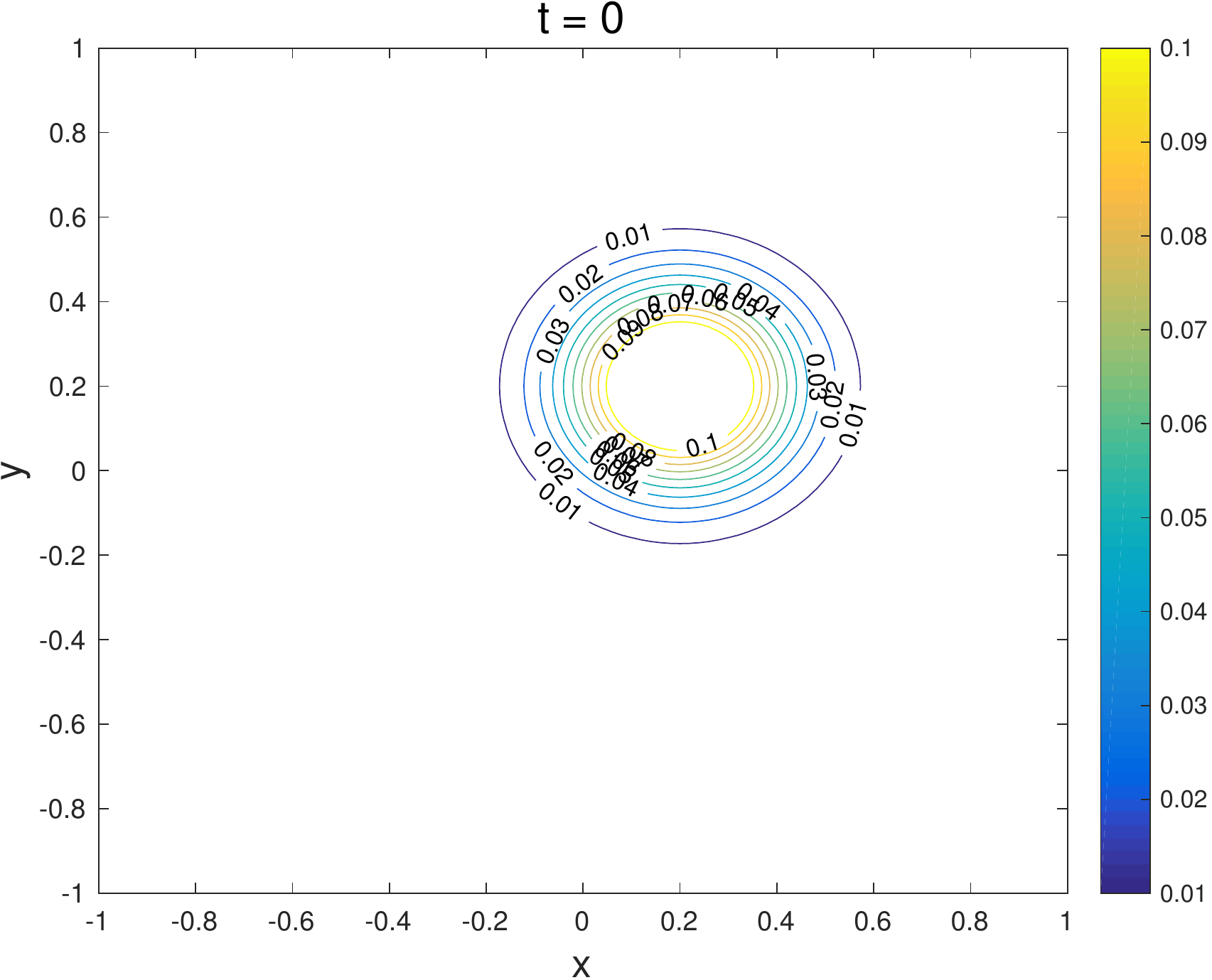}}  
		\subfigure[]{ 
			\includegraphics[width=0.3\textwidth]{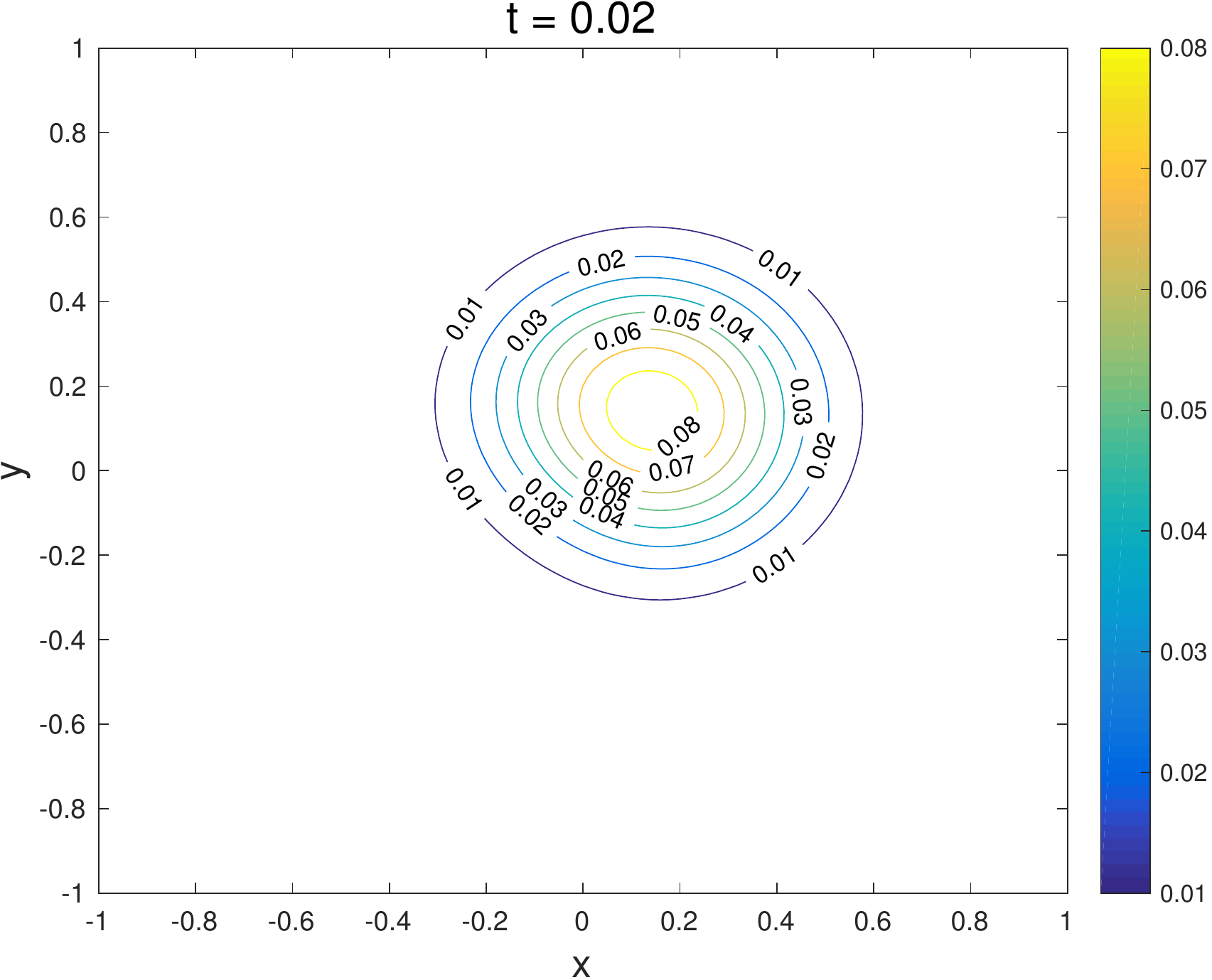}  
		}
		\subfigure[]{ 
			\includegraphics[width=0.3\textwidth]{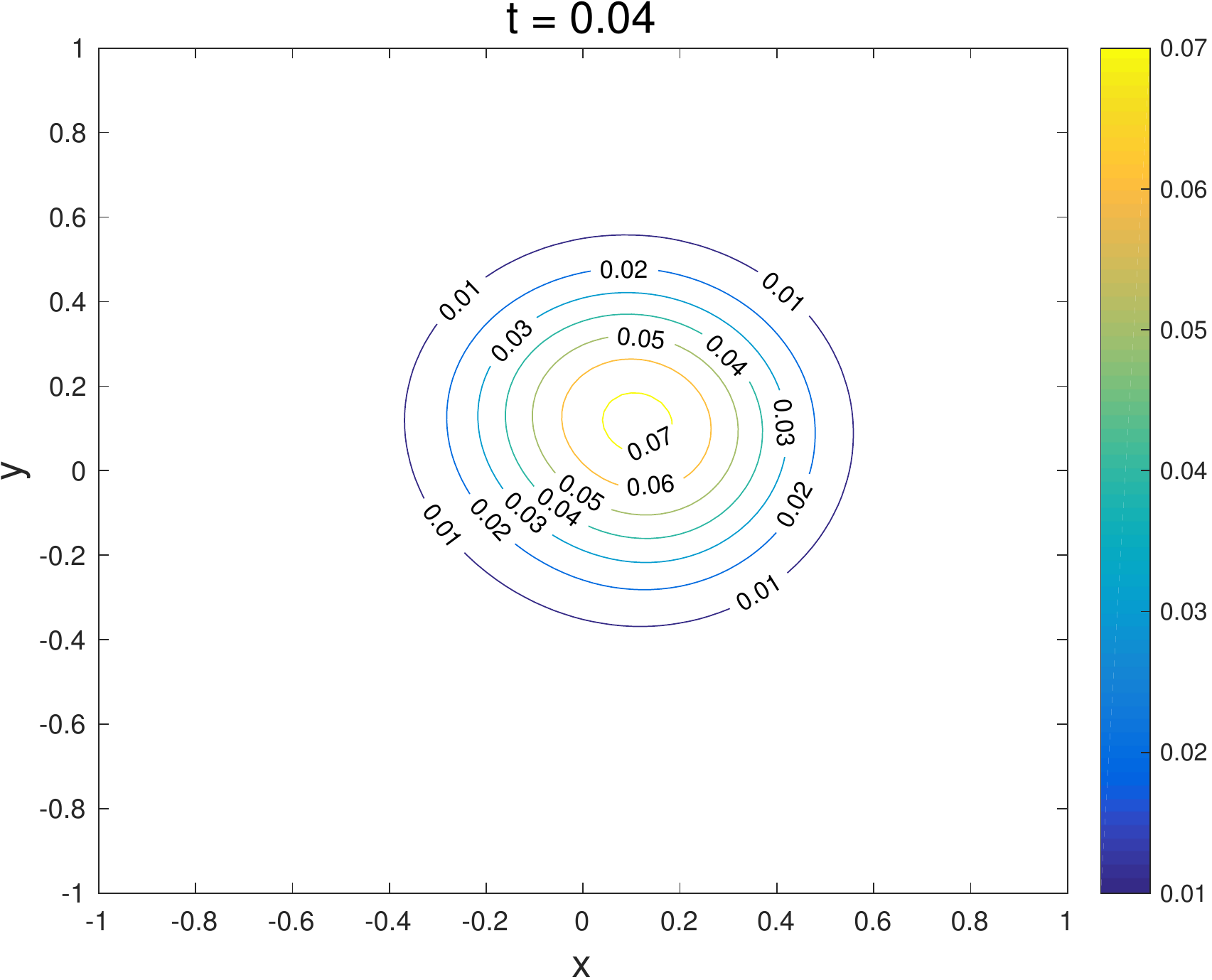}  
		}
		\subfigure[]{ 
			\includegraphics[width=0.3\textwidth]{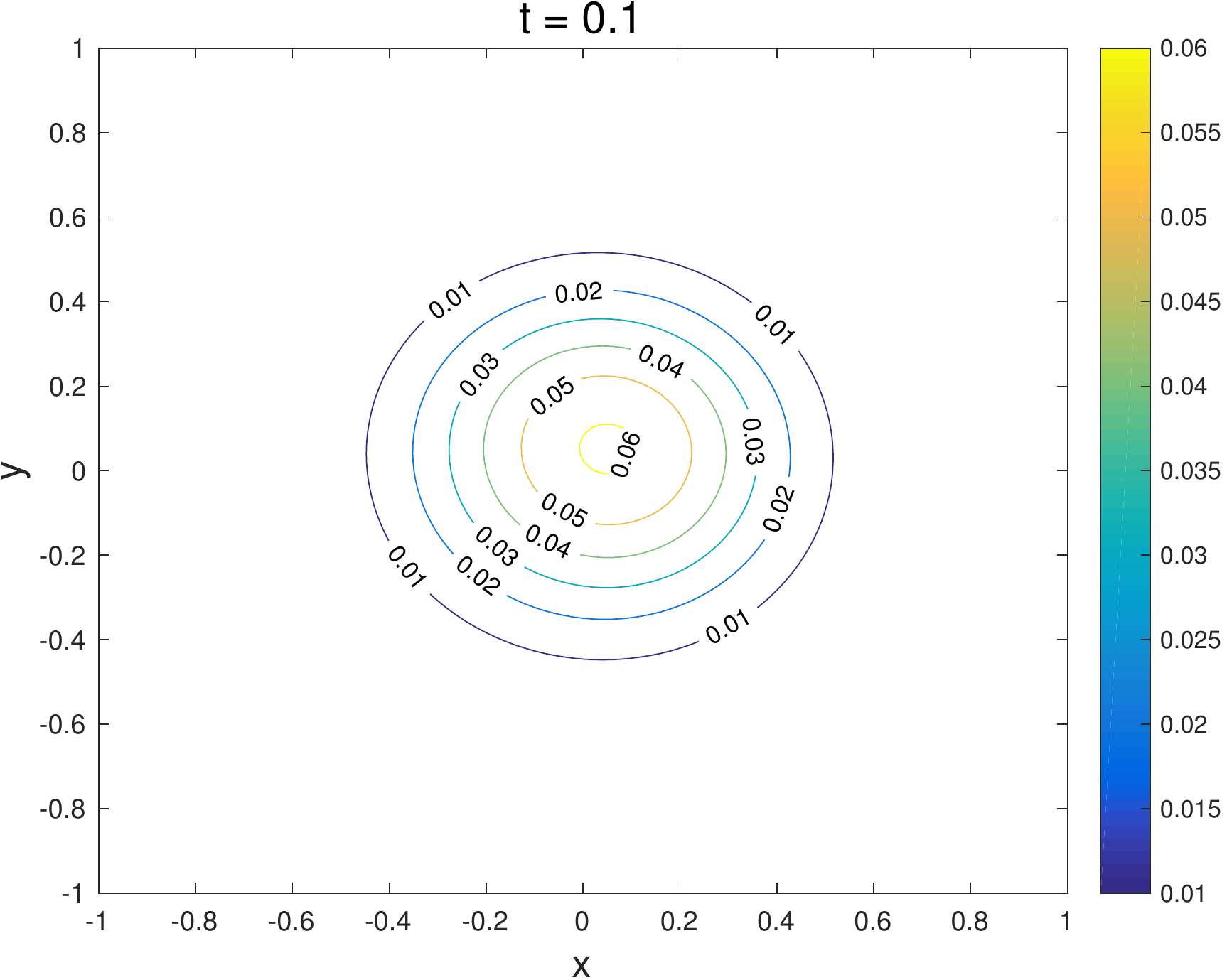} 
		}
		\subfigure[]{ 
			\includegraphics[width=0.3\textwidth]{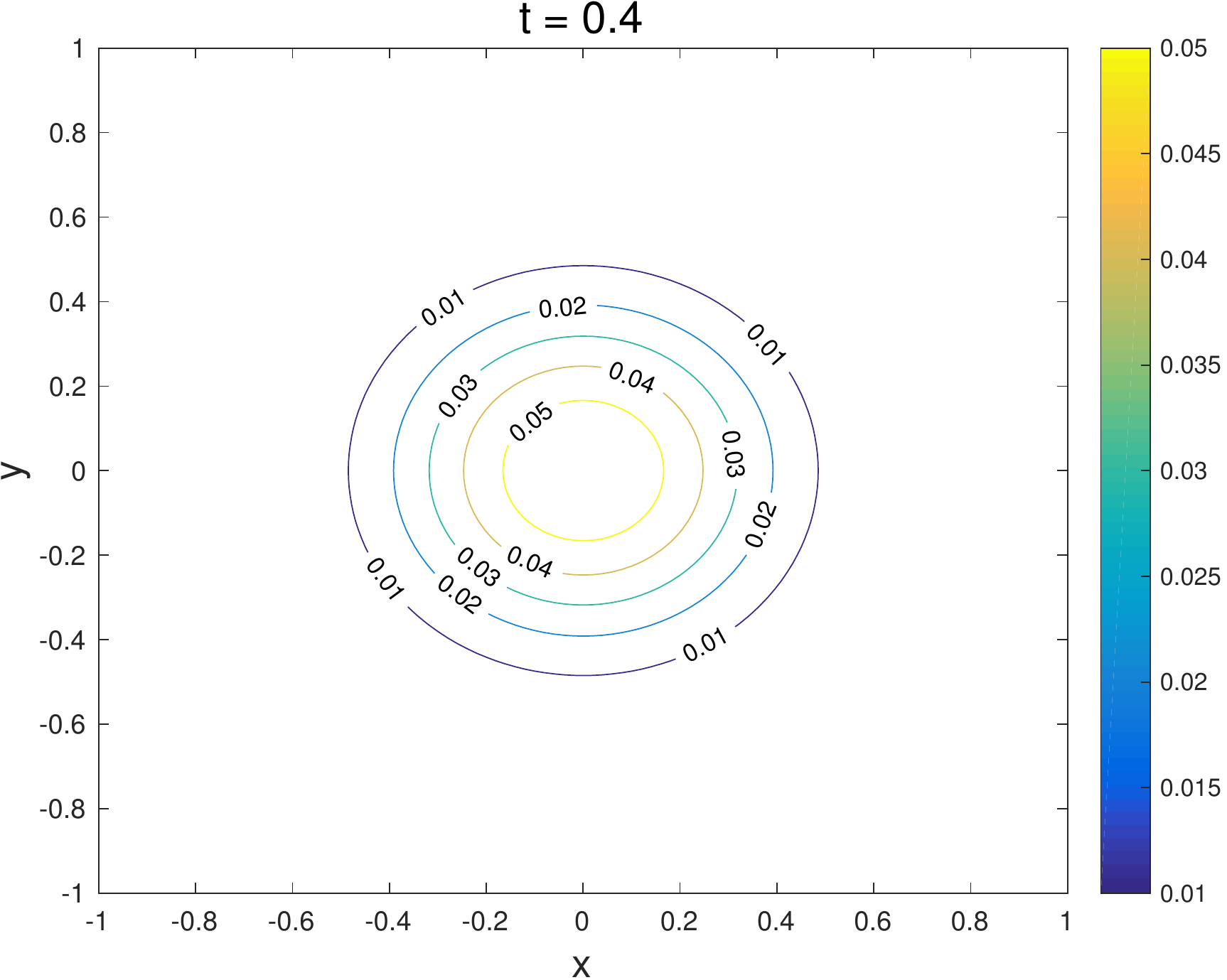}
		}
		\subfigure[]{ 		\includegraphics[width=0.3\textwidth]{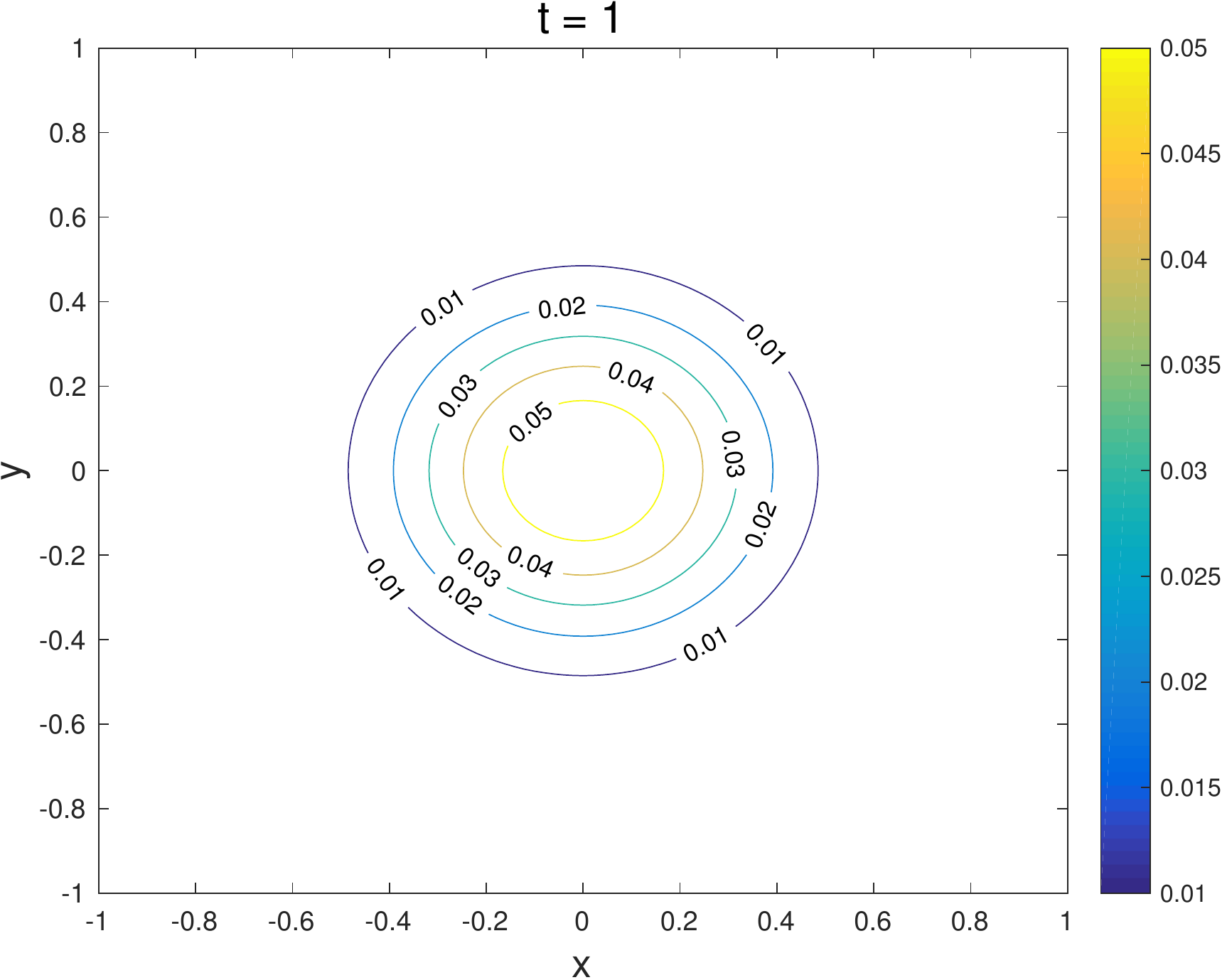}
		}
		\caption{Multiple Species Example 1 in Two-dimension: the transport of $1$-th the ionic species with the mesh size $\Delta x$ and $\Delta y$ being 0.02 and the time $t$ changing from 0 to 1.}
		\label{c1oft}
	\end{figure}
	
	\subsubsection{Finite Size Effect} 
	Similar to the one-dimensional case, the kernel $\mathcal{W}$ in the model (\ref{model2a})-(\ref{model2c}) represents the steric repulsion arising from the finite size 
	and the strength of the kernel $\mathcal{W}$ is indicated by the parameter $\eta$, which means the larger $\eta$ is, the stronger the nonlocal steric repulsion effect is, and thus the less peaked the concentrations of the steady state are. And $\eta = 0$ means steric repulsion vanishes. 
	Here we aim to explore this phenomenon by different values of the parameter $\eta$.
	Let $\eta = 4, 1, \frac{1}{4}, \frac{1}{16}, \frac{1}{64}, 0$, the mesh size $\Delta x = \Delta y = 0.02$ and $\Delta t = 0.0004$, 
	Fig \ref{c1ofeta} shows different steady state solutions with different  $\eta$, where we can find that the finite size effect makes the concentrations $c_m, \ m = 1, 2,$ not overly peaked and we can verify that the nonlocal field induced by $\mathcal{W}$ effectively captures the steric repulsion arising from the finite size of the particles.  The numerical result is also consistent with that in \cite{liu2010, Liu2019}.

	\begin{figure}[htp] 
		\subfigure[]{ 
			\includegraphics[width=0.3\textwidth]{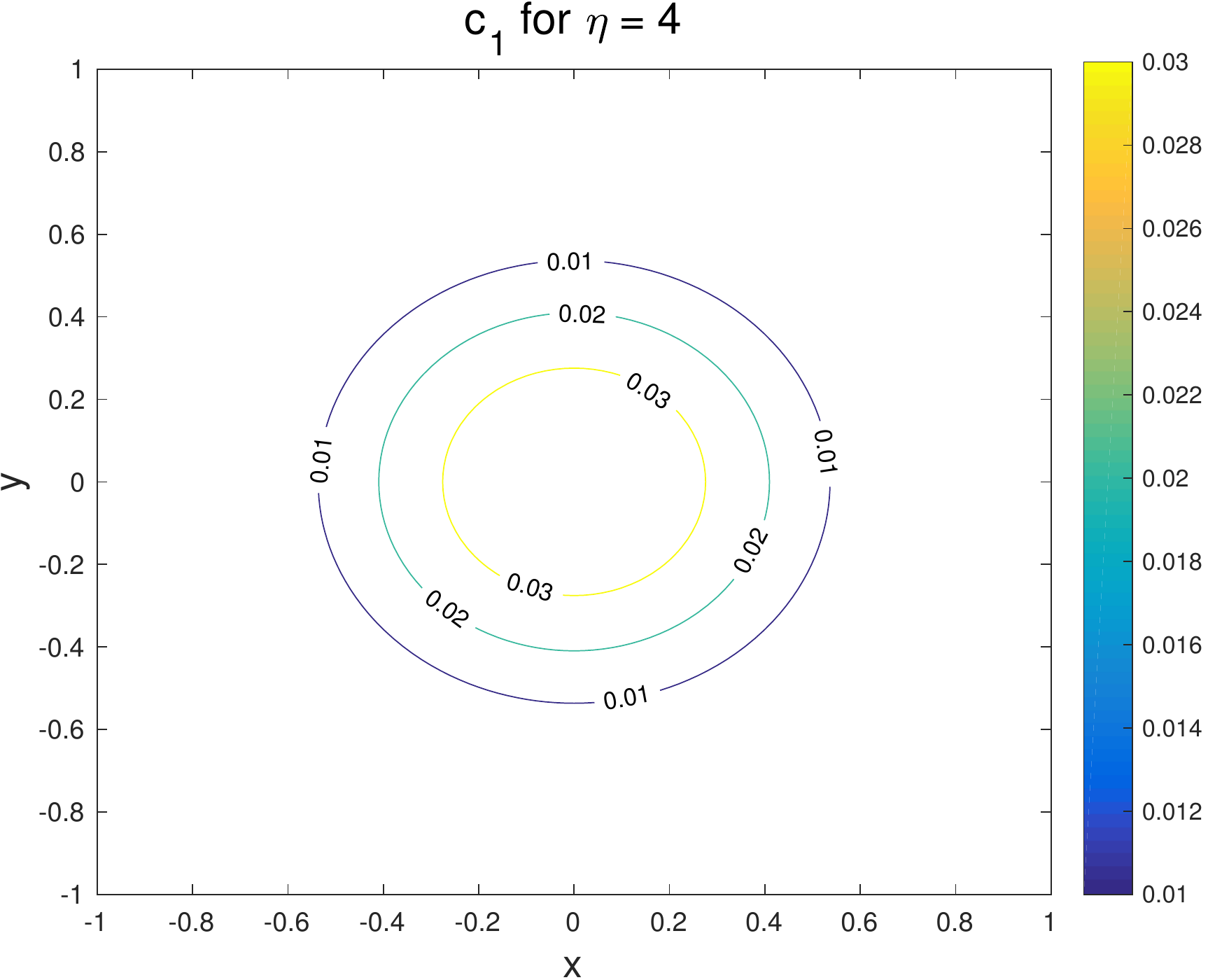}} 
		\subfigure[]{ 
			\includegraphics[width=0.3\textwidth]{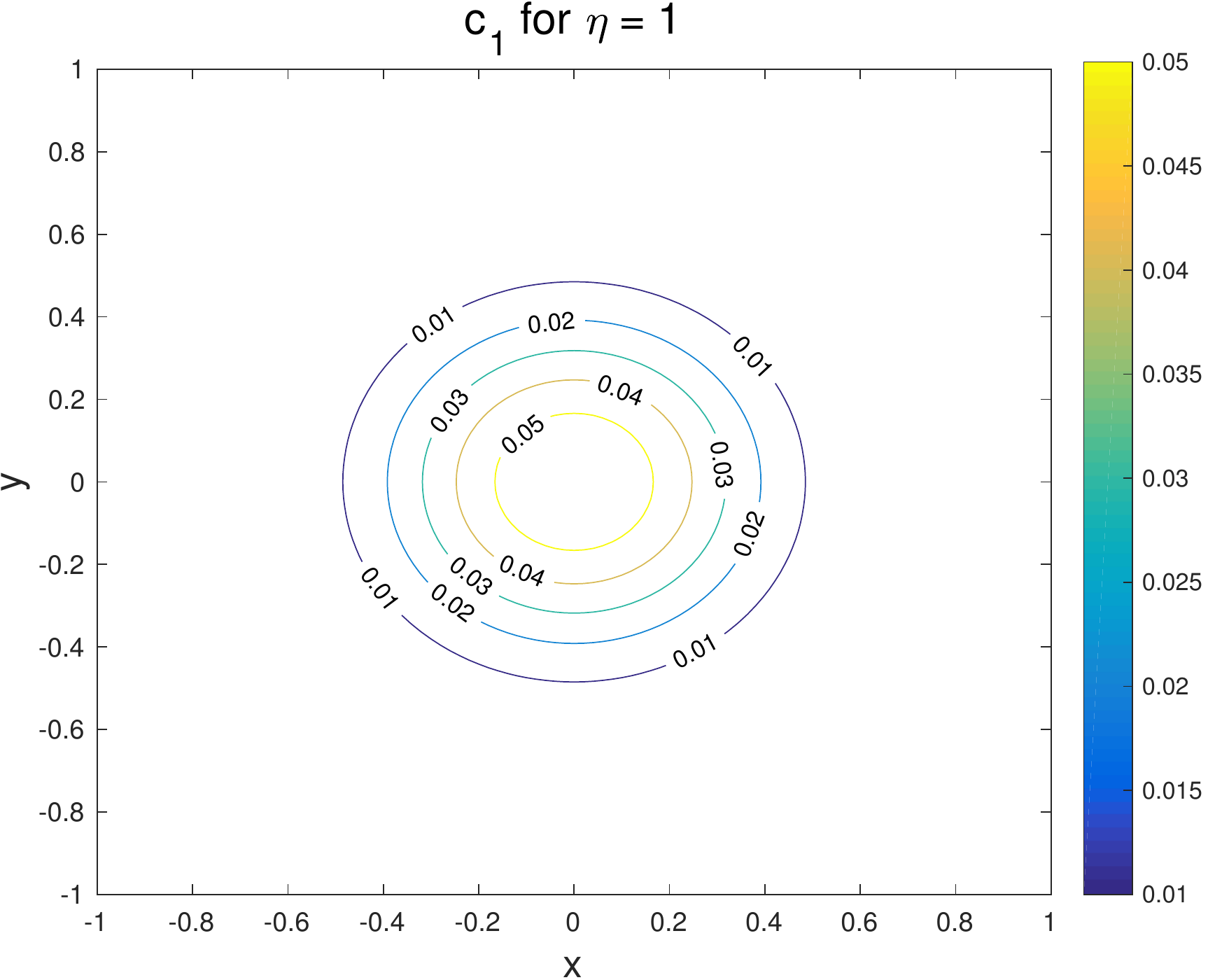} 
		}
		\subfigure[]{ 
			\includegraphics[width=0.3\textwidth]{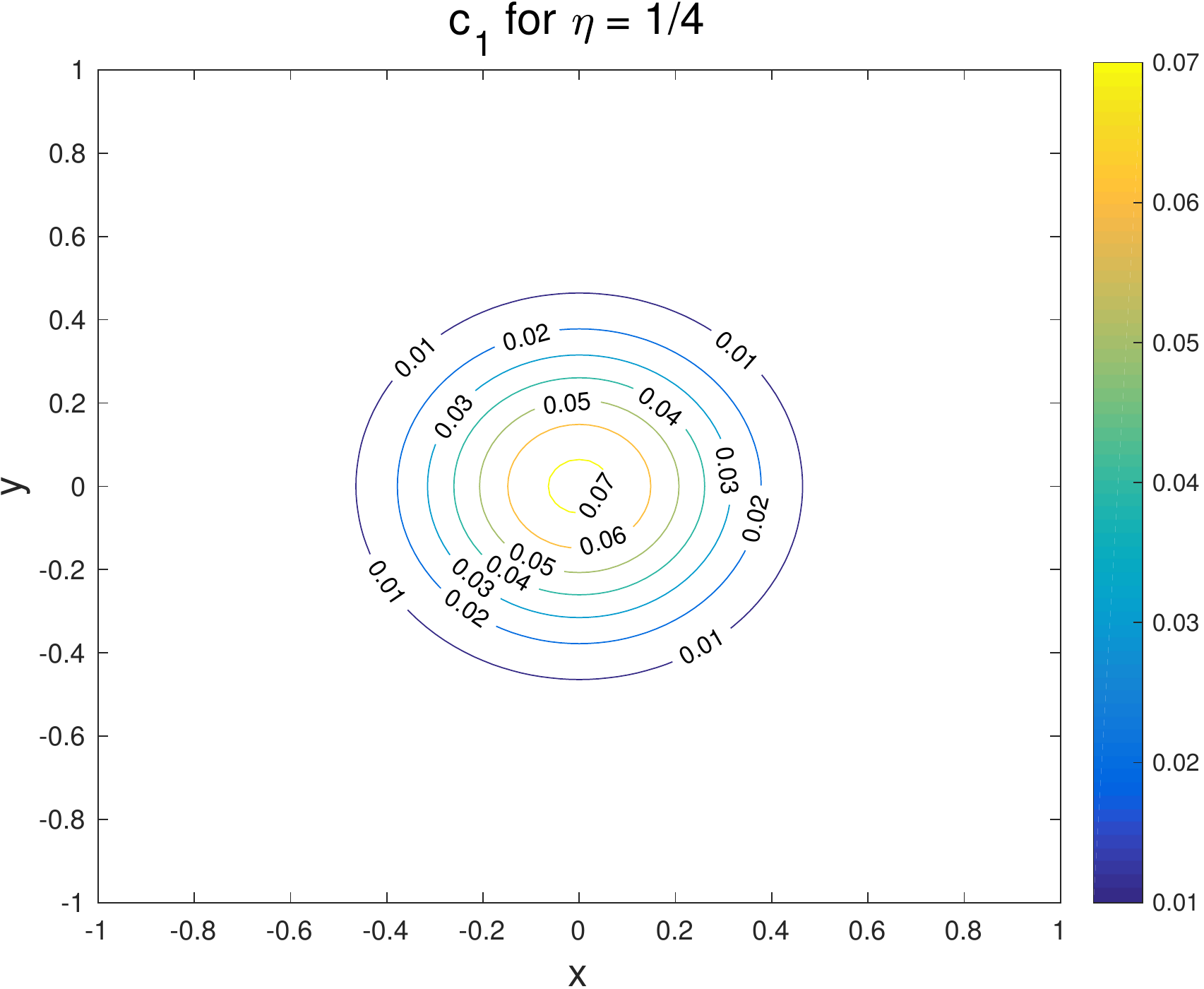} 
		}
		\subfigure[]{ 
			\includegraphics[width=0.3\textwidth]{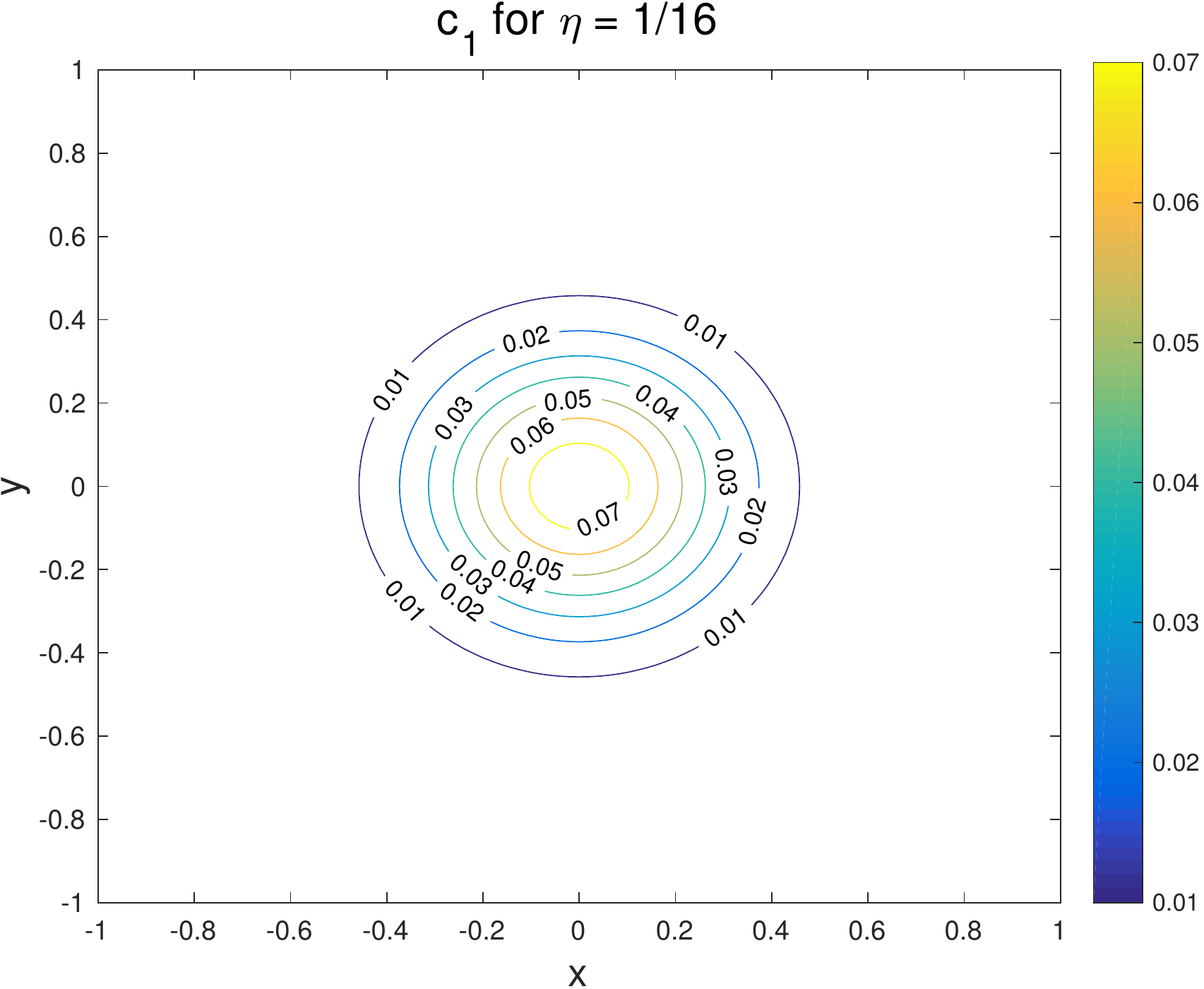}
		}
		\subfigure[]{ 		\includegraphics[width=0.3\textwidth]{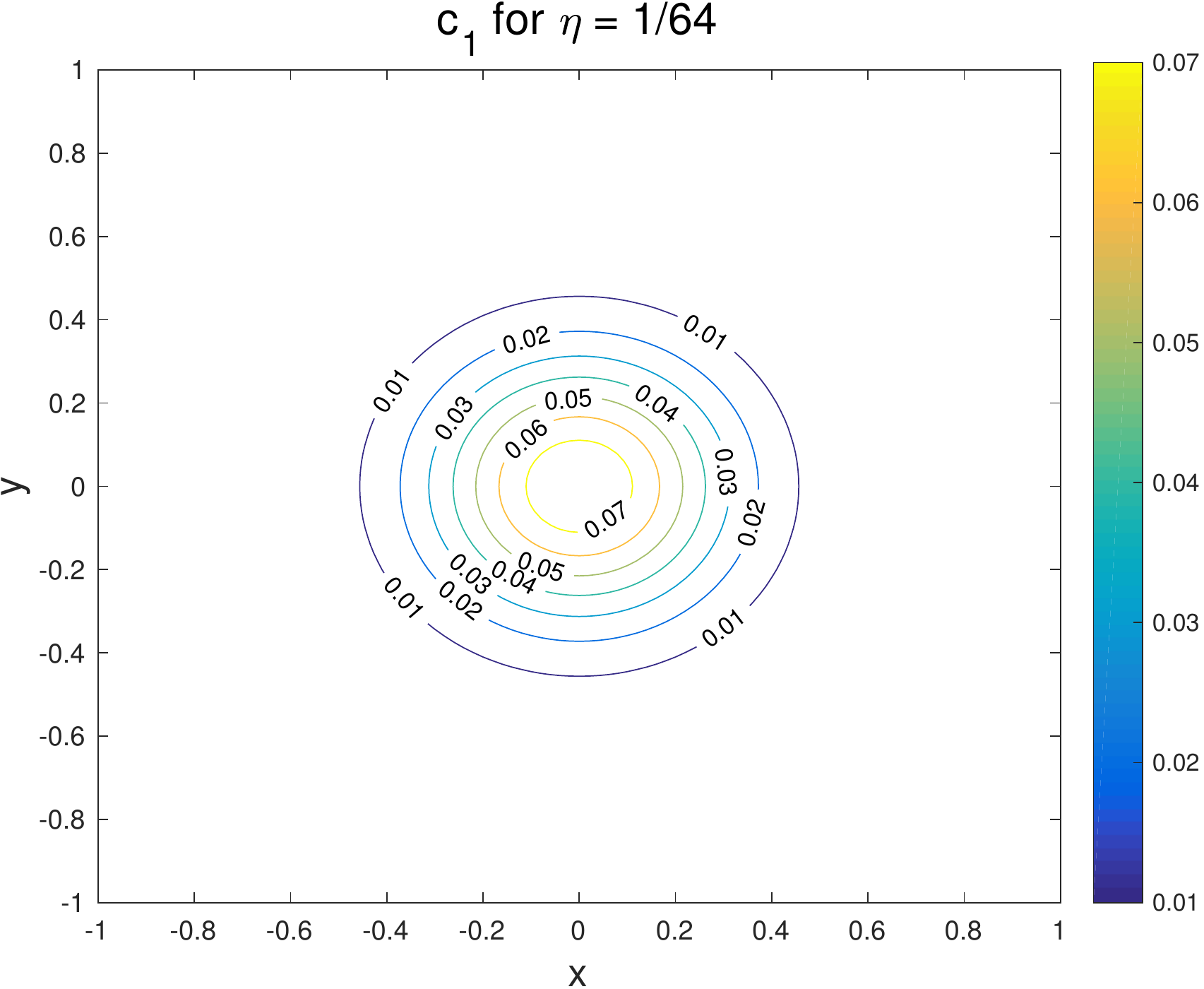} 
		}
		\subfigure[]{ 		\includegraphics[width=0.3\textwidth]{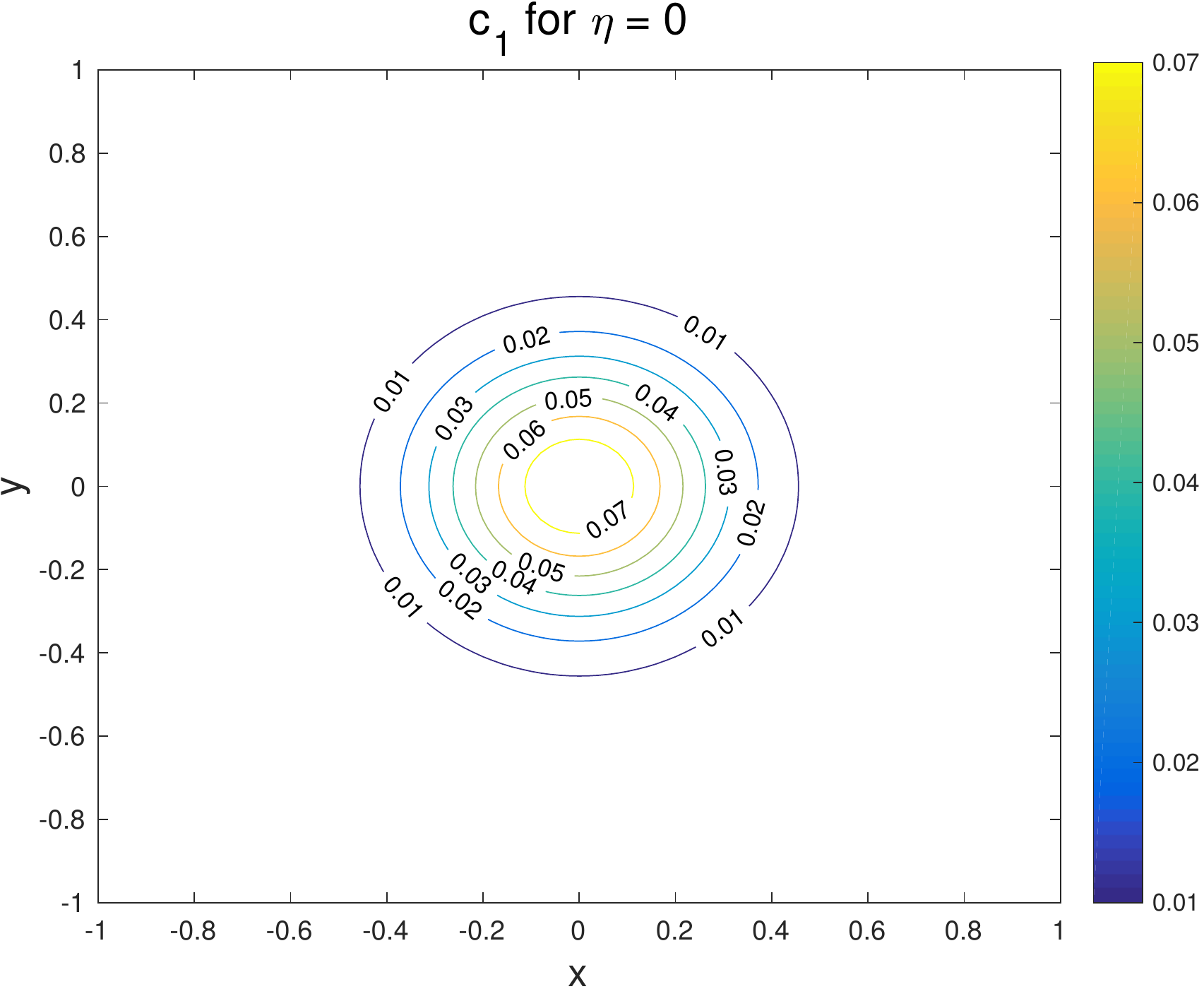} 
		}
		\caption{Multiple Species Example 1 in Two-dimension: {\bf Finite size effect.} the steady state density solutions $c_1$ with $\eta = 4, 1, \frac{1}{4}, \frac{1}{16}, \frac{1}{64}, 0$.}
		\label{c1ofeta}
	\end{figure}
	
	\subsubsection{Asymptotic Time Complexity} 
	For this problem, Table \ref{table1} shows that the asymptotic time complexity of the convolution of singular kernel is $O\left(N\log (N)\right)$.
	
	\begin{table}[htbp]
		\centering  
		\caption{Run Time of Convolution}  
		\label{table1}  
		\begin{tabular}{|c|c|c|c|c|}  
			\hline  
			& & & &  \\[-6pt]  
			mesh size $\Delta x = \Delta y$  & 0.02 & 0.01 & 0.005 & 0.0025 \\  
			\hline
			\hline
			& & & &  \\[-6pt]  
			$N_x = N_y$ &   100 &  200 &  400 & 800  \\ 
			time &    10.0148   & 49.0993  &  206.0754 & 875.1494  \\
			time$/(N\log N), N = N_x \cdot N_y$  &  $0.1087 \times 10^{-3}$  &  $1.158\times 10^{-4}$  &  $1.075\times 10^{-4}$  &  $1.023\times 10^{-4}$  \\
			\hline
		\end{tabular}
	\end{table}
	
	\subsubsection{Steady States in a Multi-well External Potential Function} 
	We consider a more complicated example: the multi-well extern potential function is taken as
	{\small
		$$
		V_{\text{ext}}(x, y) = -3 \exp\left(-10\left(x-\frac{1}{5}\right)^2-10\left(y-\frac{1}{5}\right)^2\right)-2\exp\left(-10\left(x+\frac{3}{10}\right)^2-10\left(y+\frac{1}{5}\right)^2\right)+\left(x^2+y^2\right)
		$$
	}
	and initial conditions (\ref{model2c}) are given by the following form
	\begin{equation}
	\label{iniex4}
	\left\{
	\begin{array}{lll}
	c_1^0 = \frac{5}{ 2 \pi} \exp \left(-20 \left(\left(x - \frac{1}{5}\right)^2 + \left(y - \frac{1}{5}\right)^2 \right)\right) &\text{with} &z_1 = 1, \\
	c_2^0 = \frac{5}{\pi} \exp \left(-20\left(\left(x + \frac{1}{5}\right)^2 + \left(y + \frac{1}{5}\right)^2\right)\right) &\text{with} &z_2 = -1,
	\end{array}
	\right.
	\end{equation}
	meanwhile, other conditions remain the same.
	
	Retake $\eta = 1$, the computation domain as $[-L, L] \times [-L, L], \ L = 1$ and the mesh size $\Delta x = \Delta y = 0.025$, Fig \ref{ex4c1} and Fig \ref{ex4c2} show how the time evolution of the concentrations of the $m$-th ionic species $c_m, m = 1, 2$, change with time $t$ respectively. And Fig \ref{muoft2d2} and Fig \ref{eoft2d2} show the relation between the time $t$ and the discrete forms of the chemical potential $\mu_m, m = 1, 2$ and energy $\mathcal{F}$. It's observed that the discrete free energy decays and the chemical potential $\mu_m$ goes to a constant while the field model goes to the equilibrium for all $m$. The results are also consistent with the conclusions in \cite{Liu2019}.
	
	\begin{figure}[htp] 
		\subfigure[]{ 
			\includegraphics[width=0.3\textwidth]{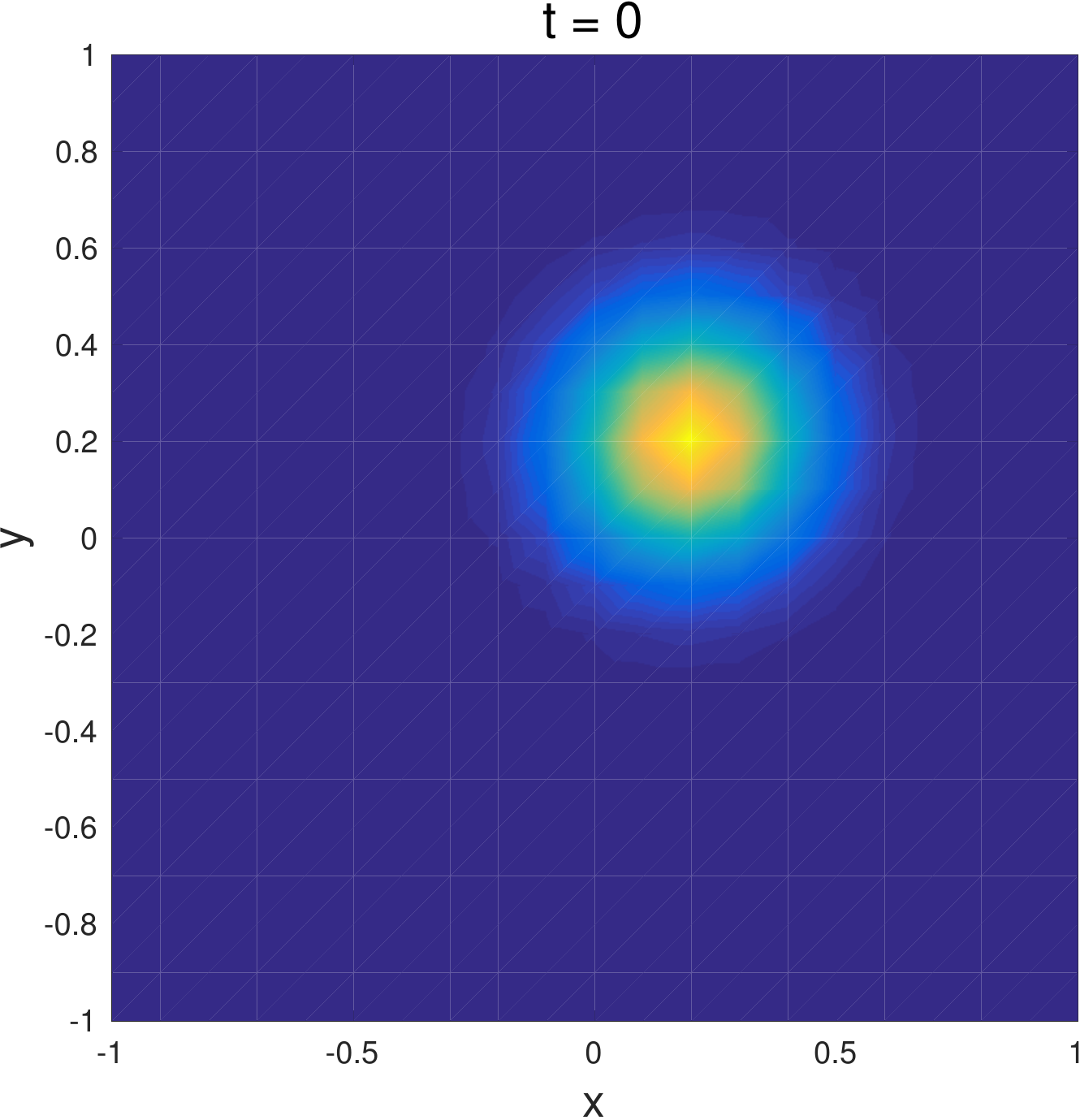}
		} 
		\subfigure[]{ 
			\includegraphics[width=0.3\textwidth]{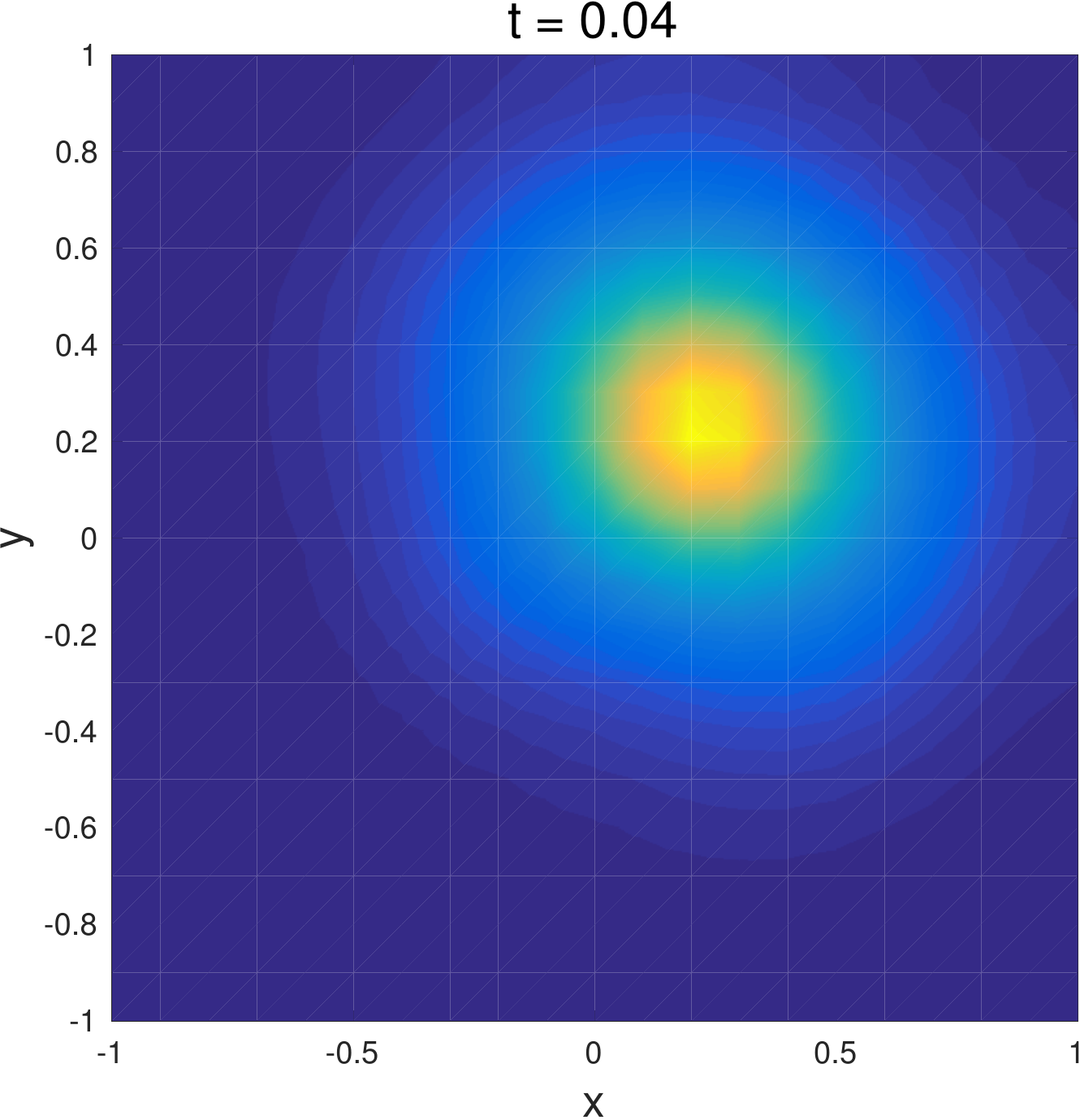} 
		}
		\subfigure[]{ 
			\includegraphics[width=0.3\textwidth]{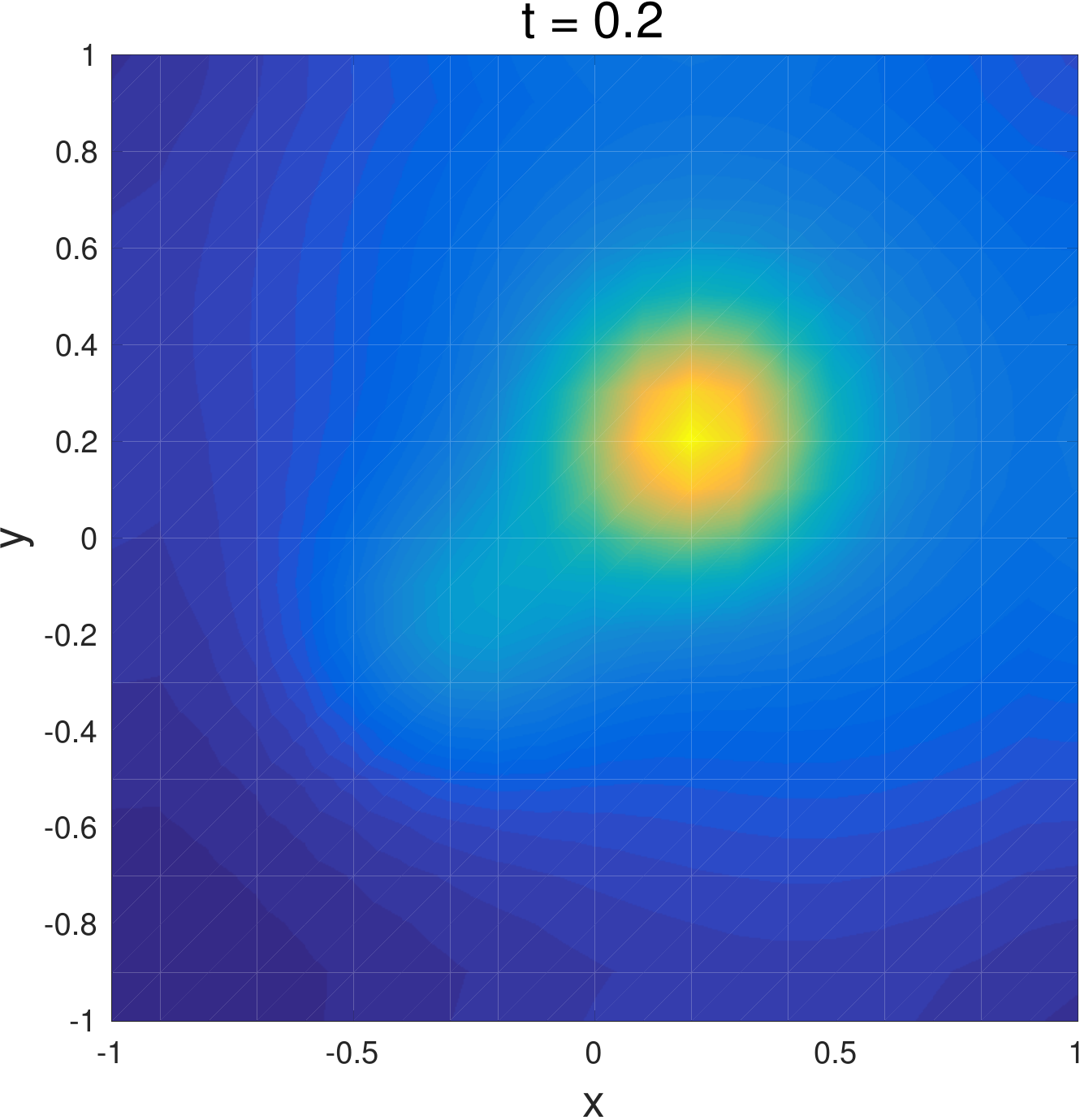} 
		}
		\subfigure[]{ 
			\includegraphics[width=0.3\textwidth]{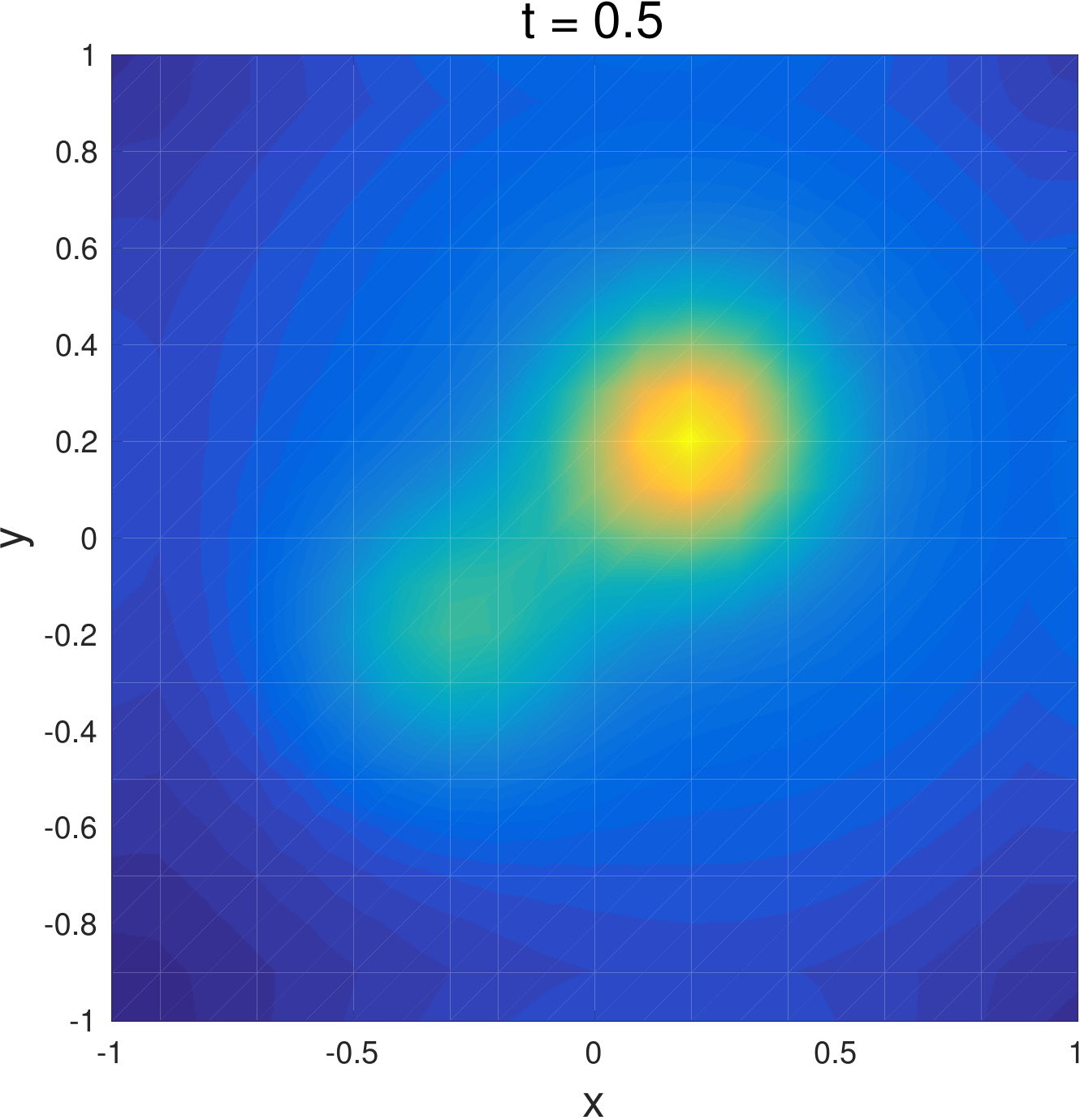}
		} 
		\subfigure[]{ 
			\includegraphics[width=0.3\textwidth]{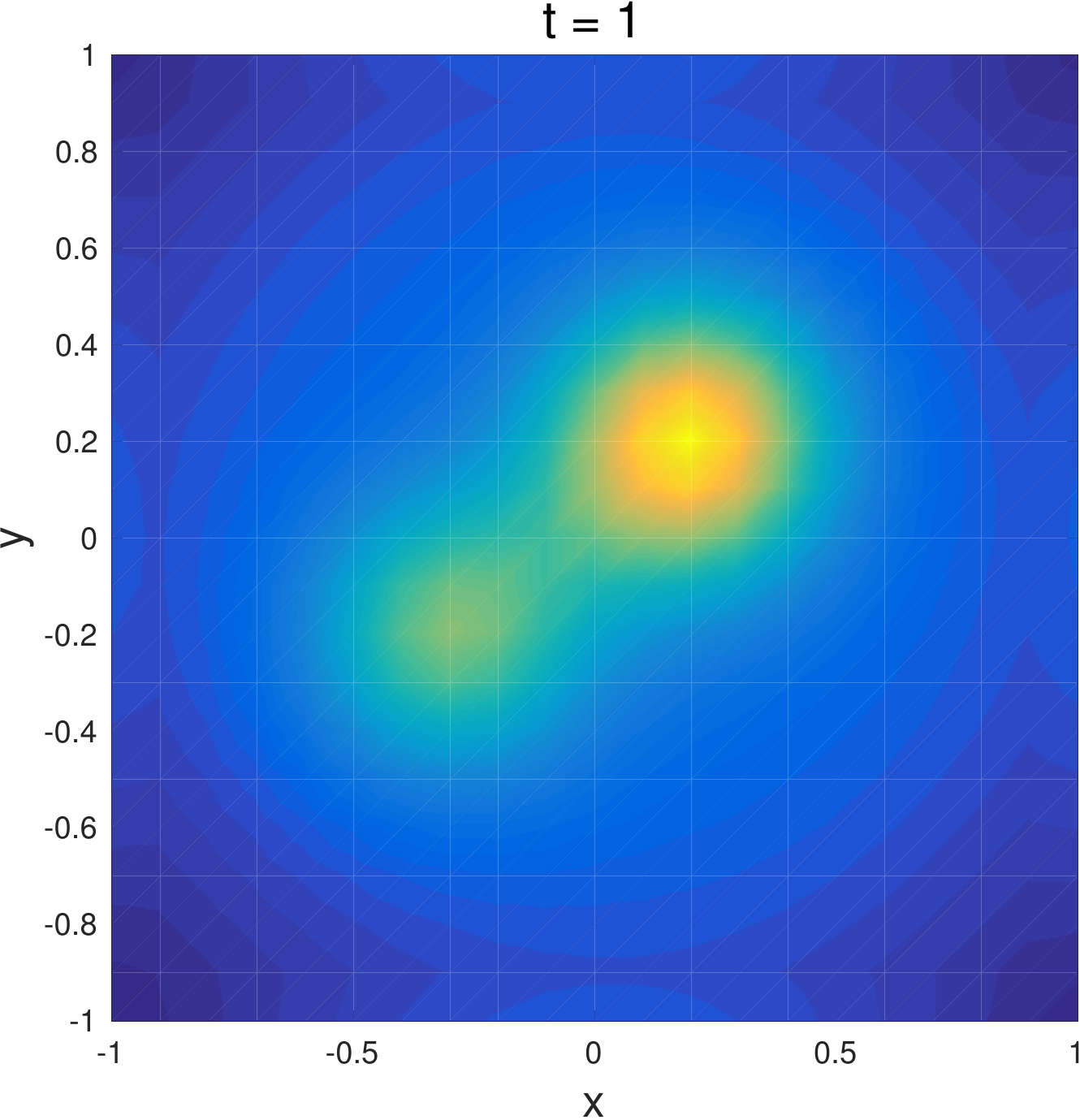} 
		}
		\subfigure[]{ 
			\includegraphics[width=0.3\textwidth]{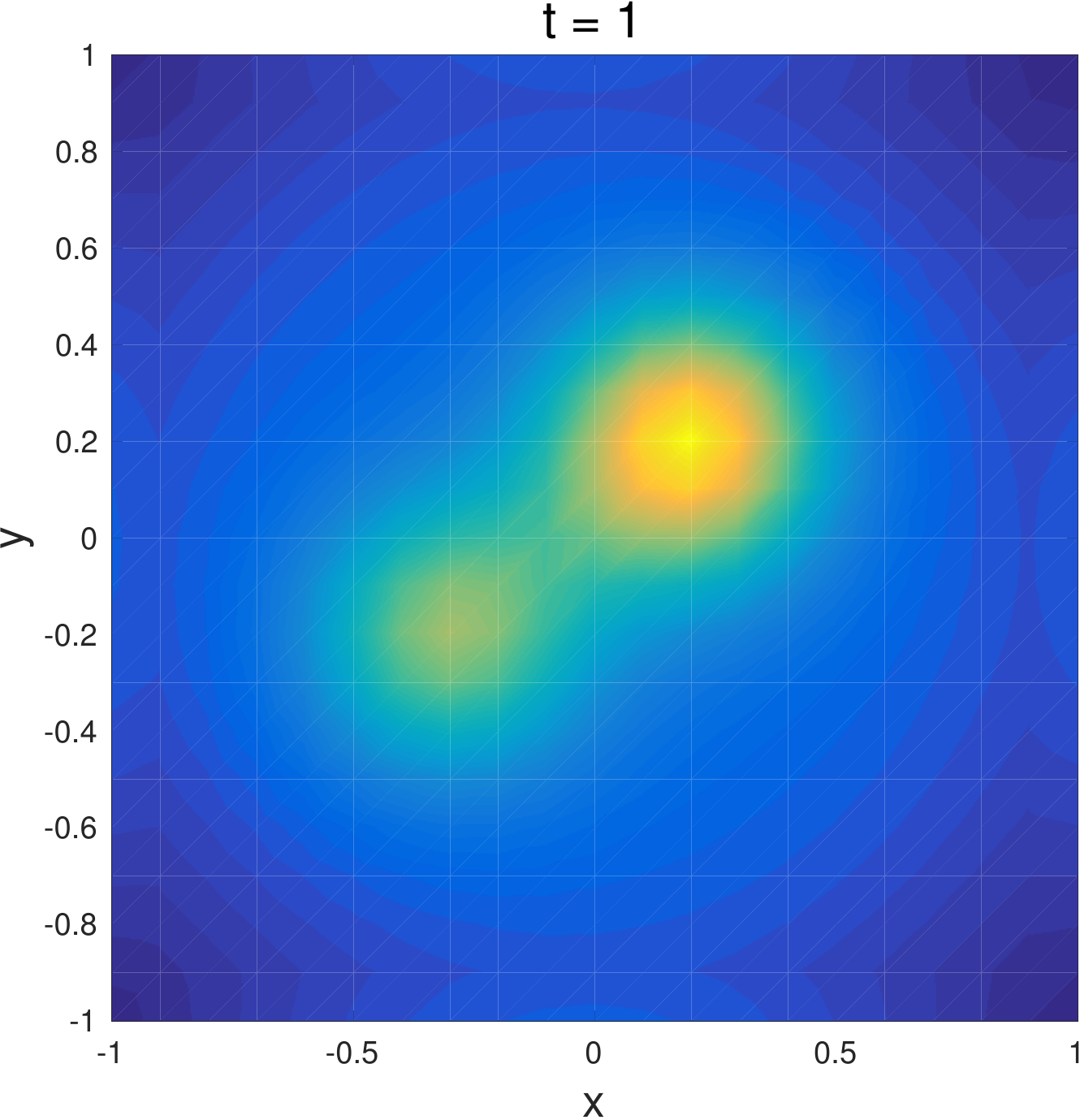} 
		}
		\caption{Multiple Species Example 2 in Two-dimension: the concentration $c_1$ with both the mesh size $\Delta x$ and $\Delta y$ being 0.025, $\Delta t = 0.0004$ and the time $t$ changing from 0 to 3}
		\label{ex4c1}
	\end{figure}
	
	\begin{figure}[htp] 
		\subfigure[]{ 
			\includegraphics[width=0.3\textwidth]{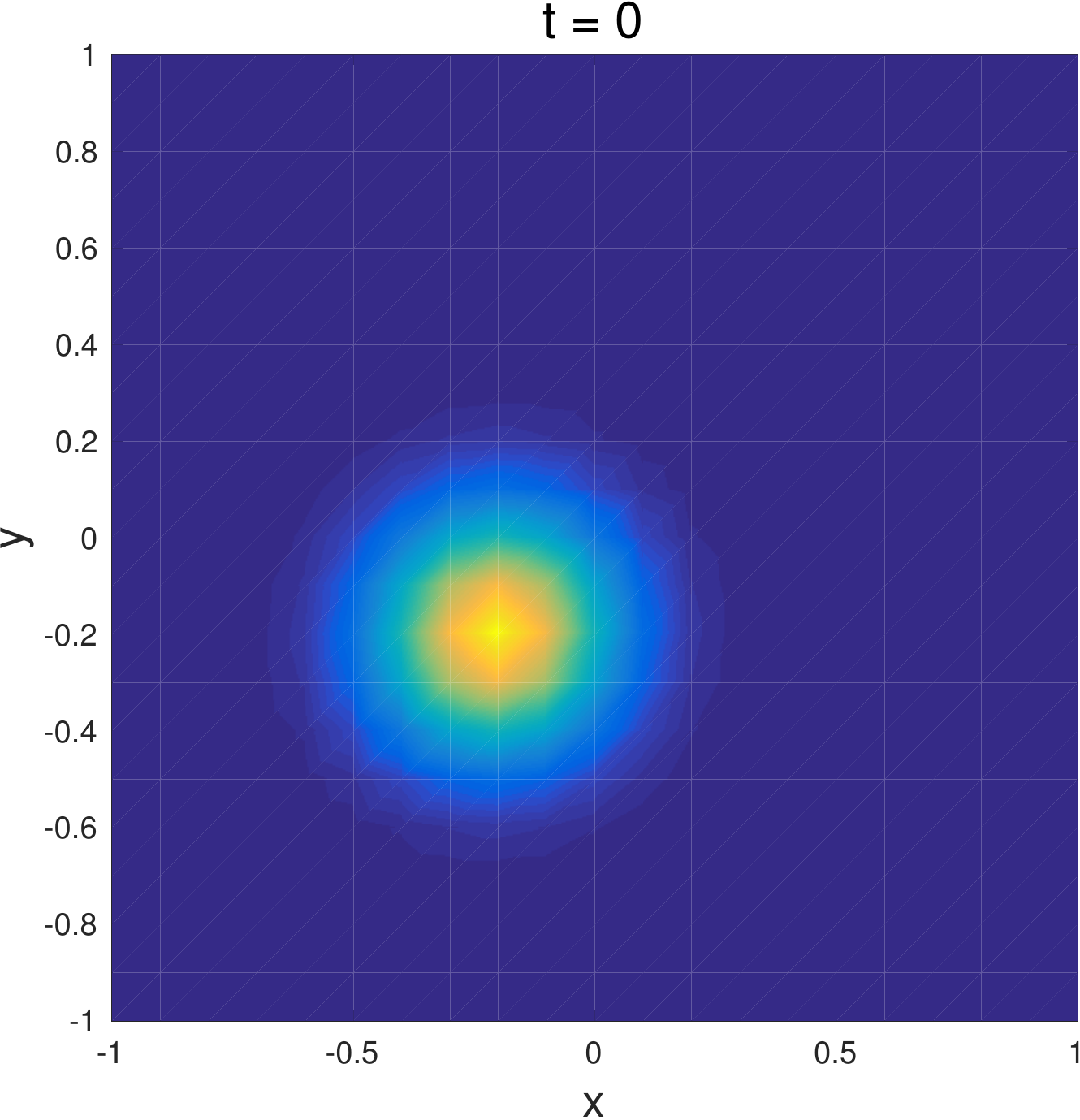}} 
		\subfigure[]{ 
			\includegraphics[width=0.3\textwidth]{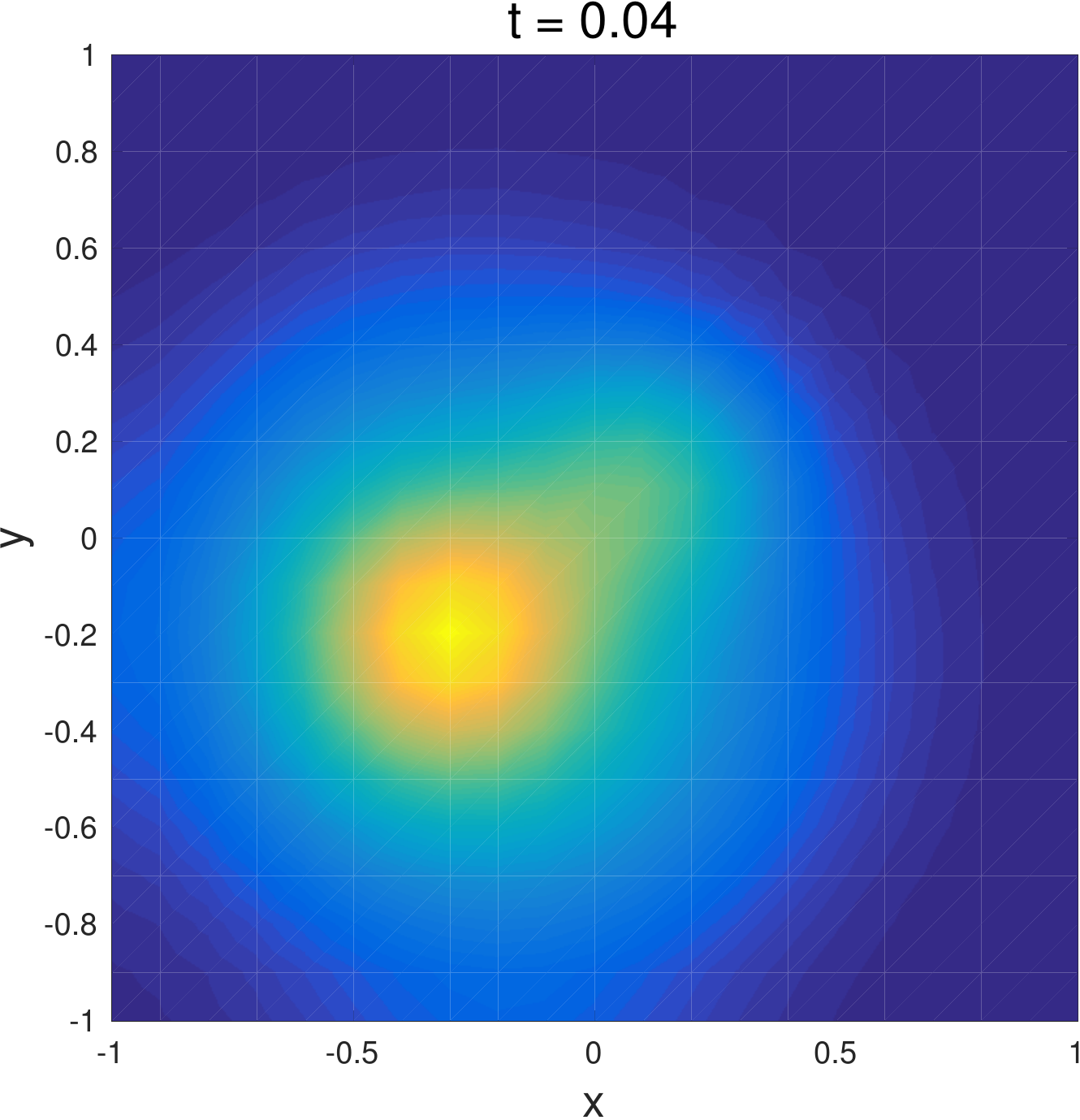} 
		}
		\subfigure[]{ 
			\includegraphics[width=0.3\textwidth]{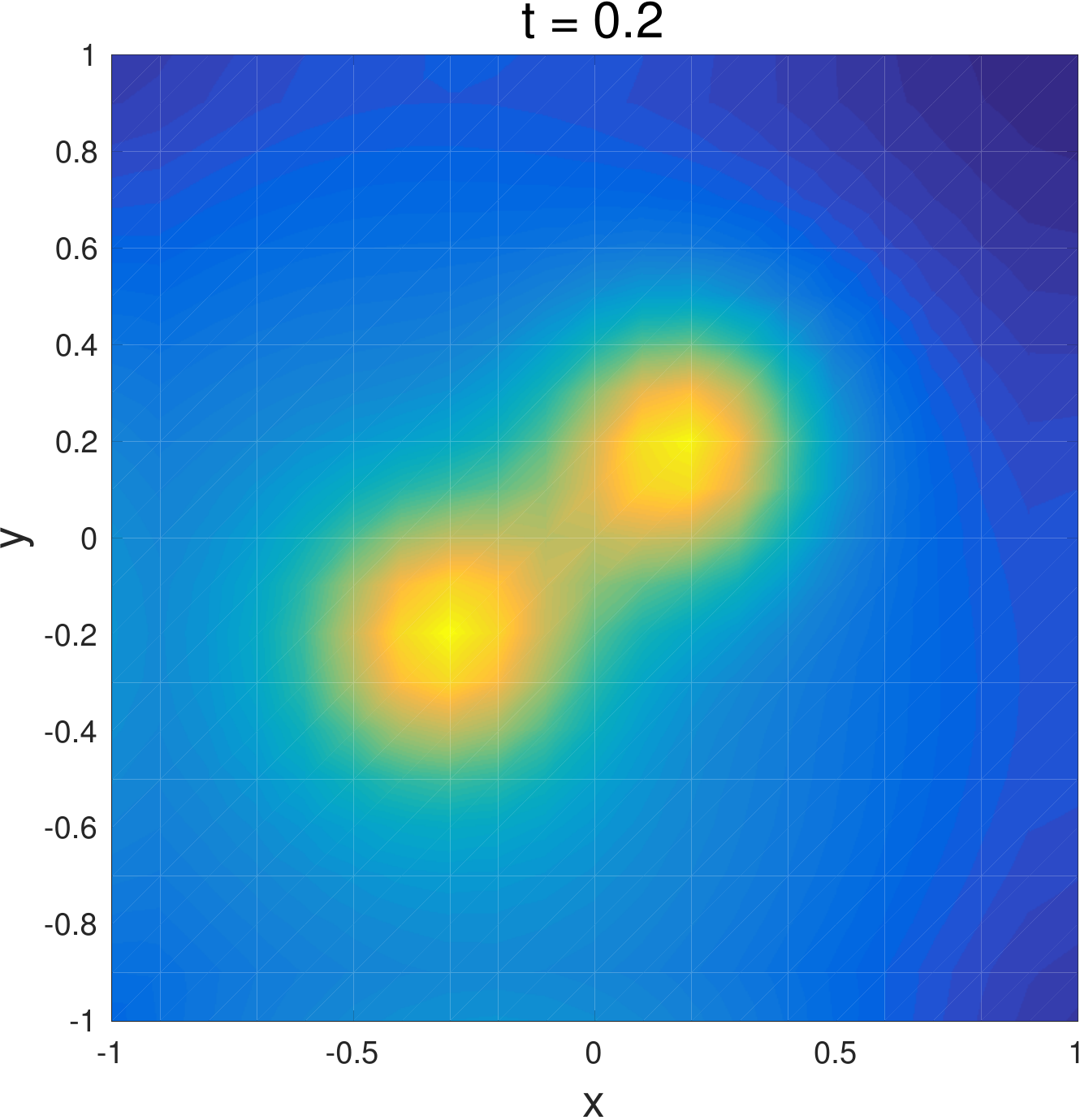}} 
		\subfigure[]{ 
			\includegraphics[width=0.3\textwidth]{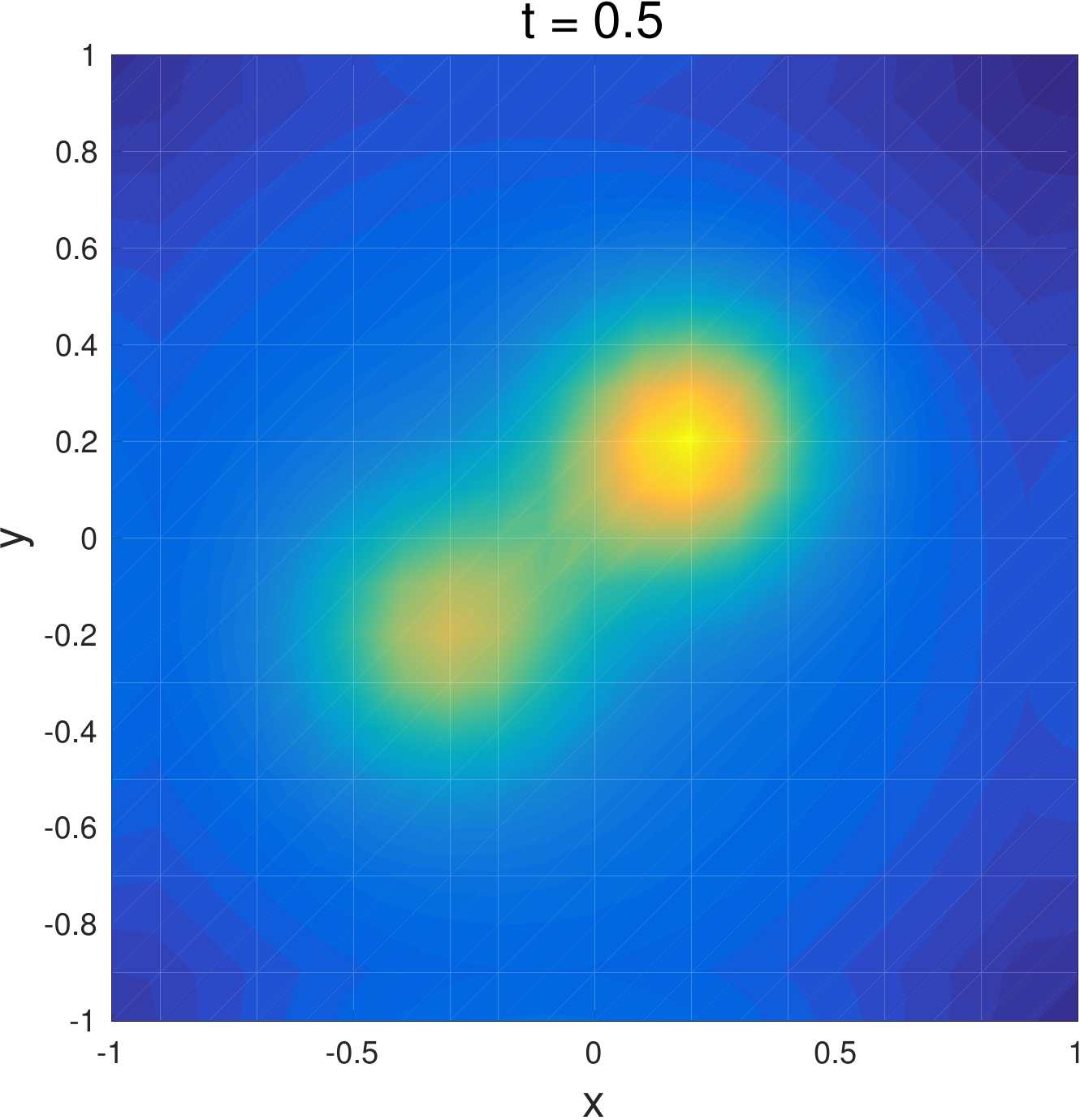} 
		}
		\subfigure[]{ 
			\includegraphics[width=0.3\textwidth]{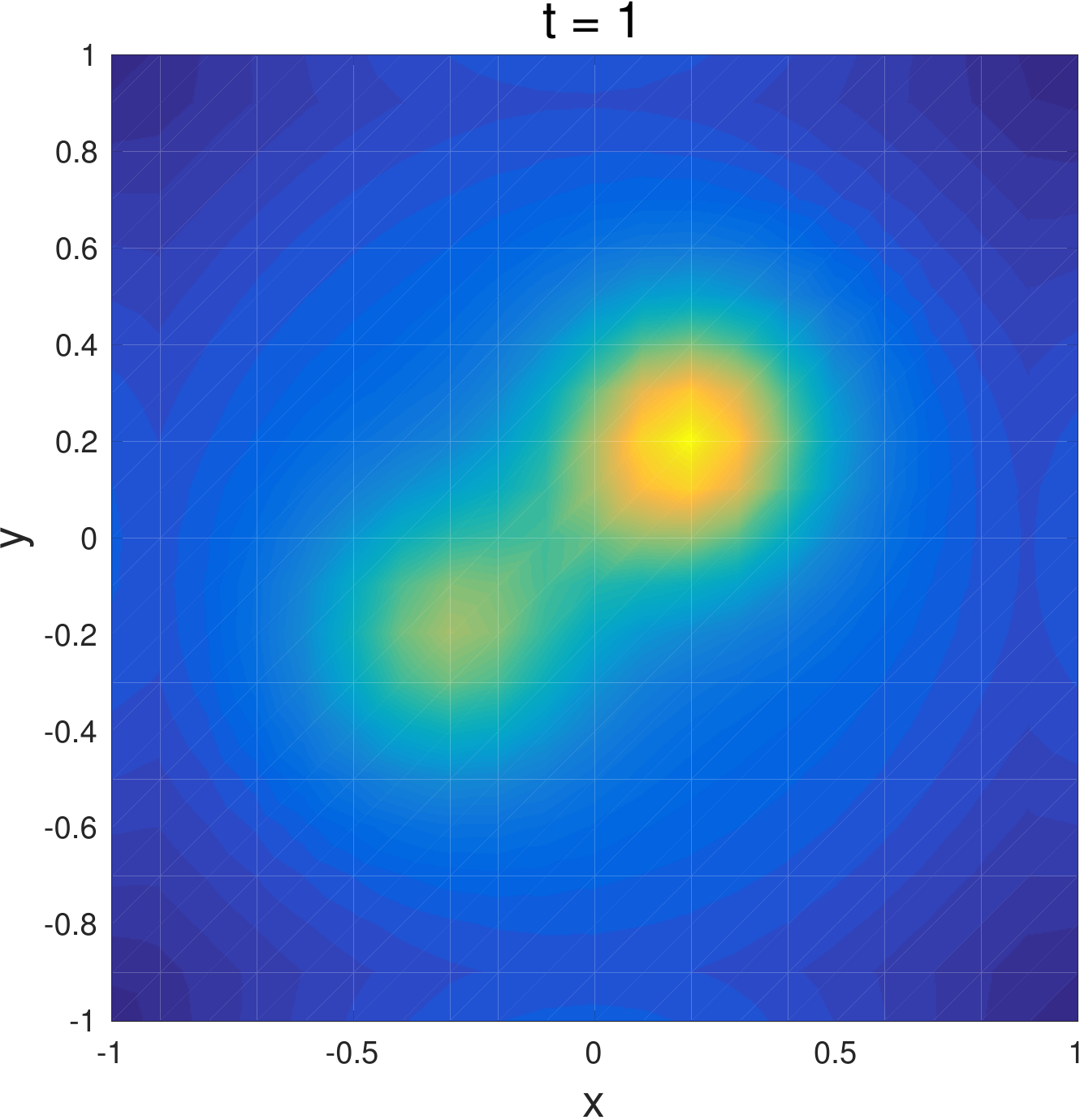}} 
		\subfigure[]{ 
			\includegraphics[width=0.3\textwidth]{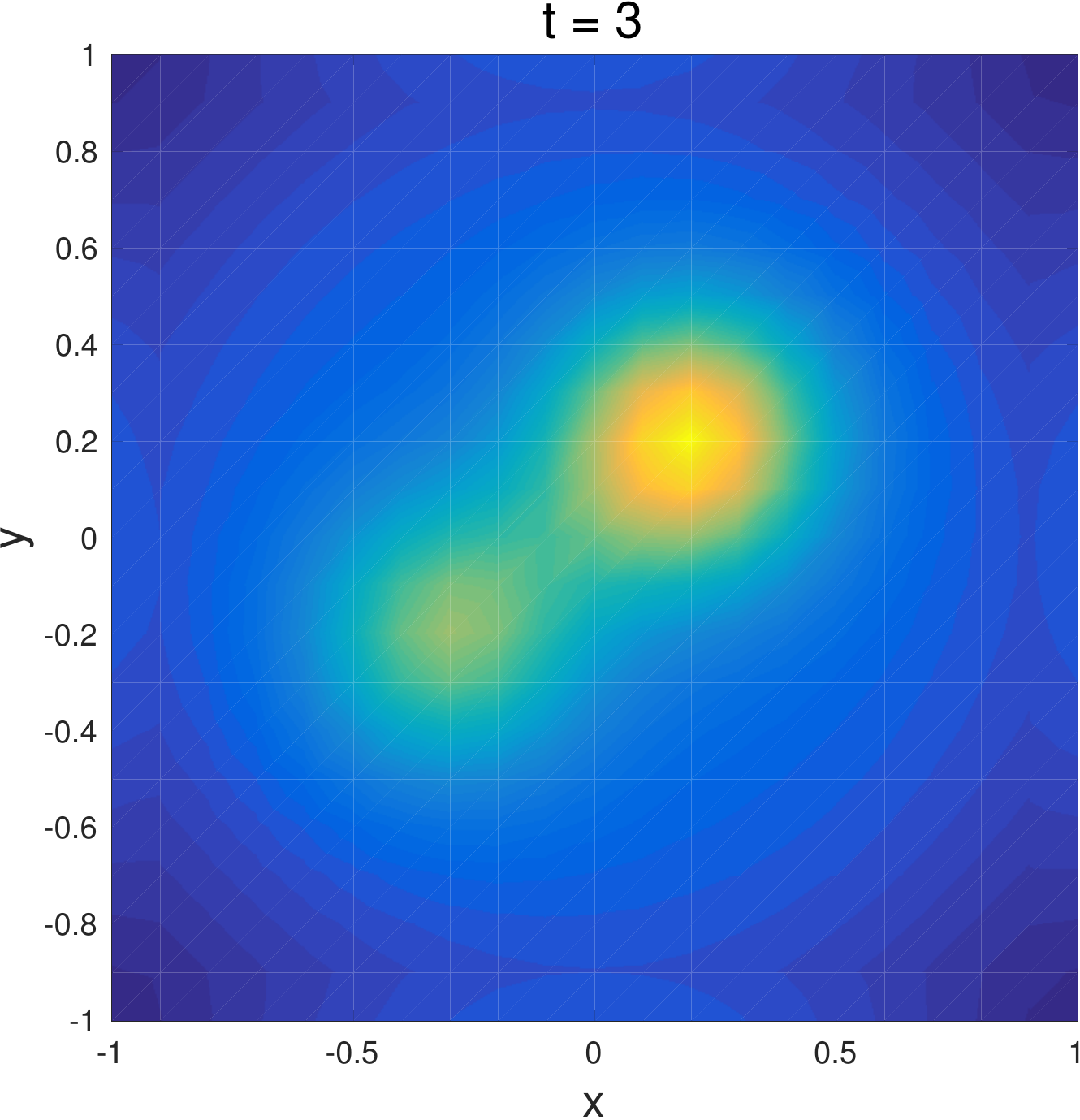} 
		}
		\caption{Multiple Species Example 2 in Two-dimension: the concentration $c_2$ with both the mesh size $\Delta x$ and $\Delta y$ being 0.025, $\Delta t = 0.0004$ and the time $t$ changing from 0 to 3}
		\label{ex4c2}
	\end{figure}
	
	\begin{figure}[htp] 
		\subfigure{
			\includegraphics[width=0.45\linewidth]{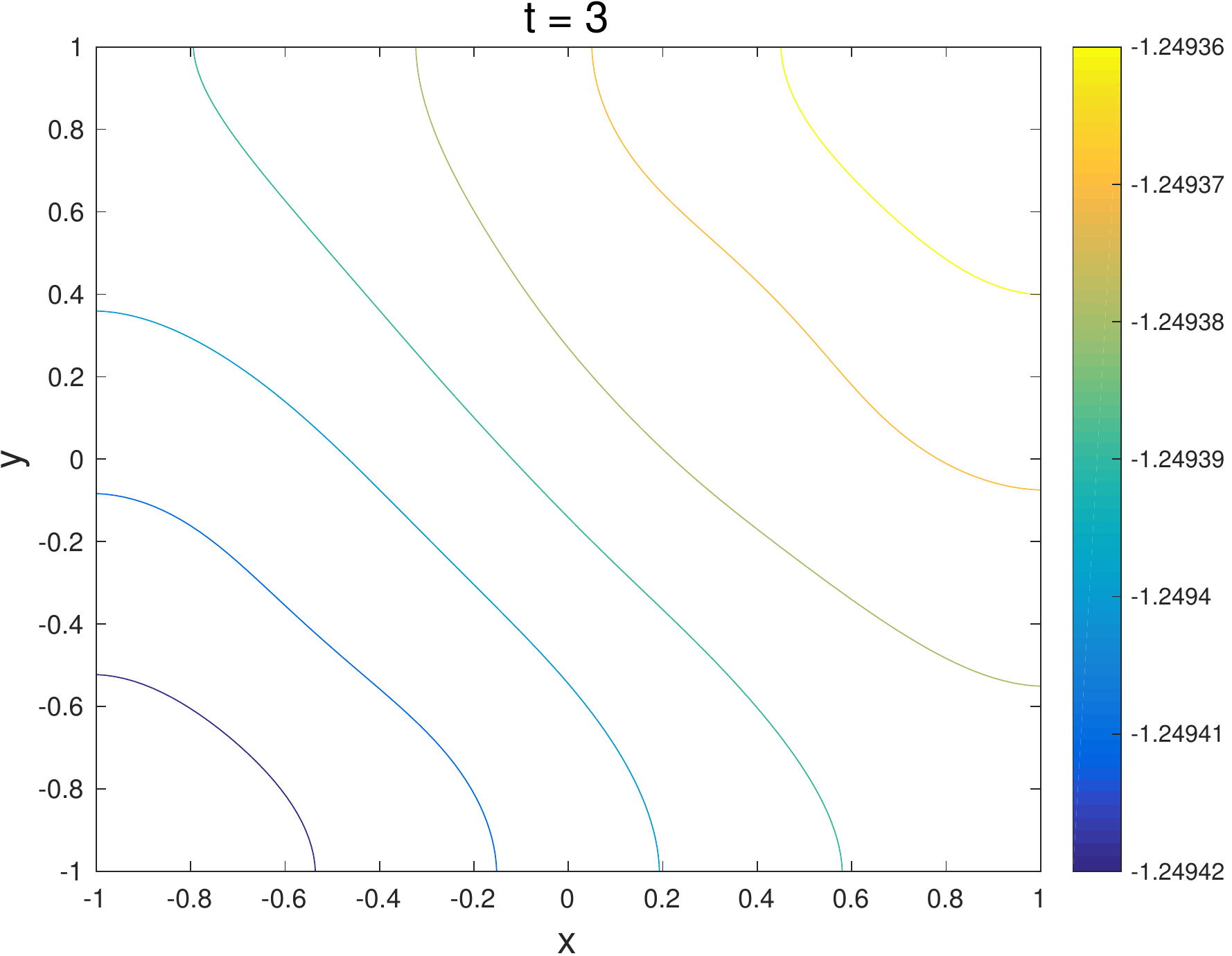}  
		}
		\subfigure{
			\includegraphics[width=0.45\linewidth]{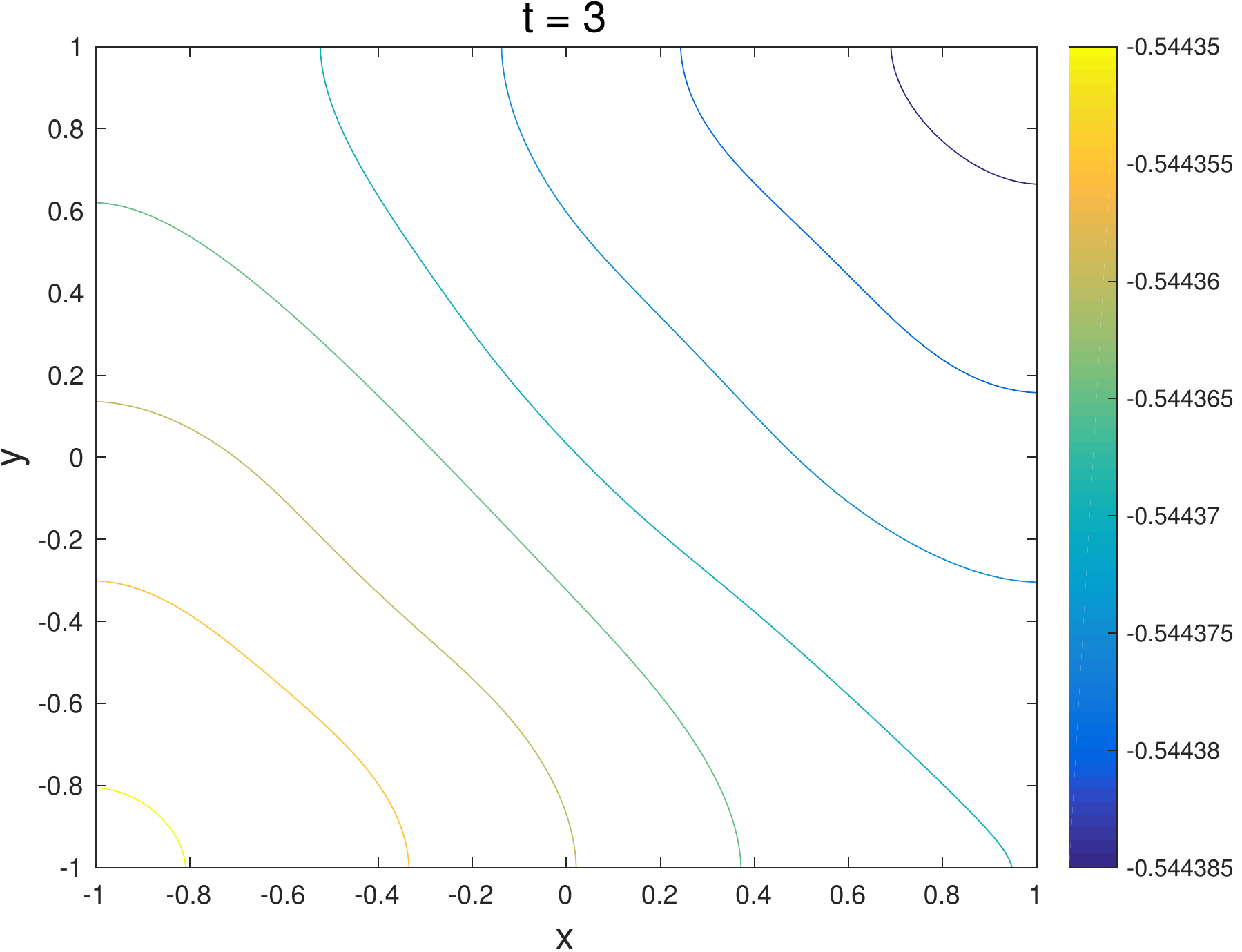}  
		}
		\caption{Multiple Species Example 2 in Two-dimension: the chemical potential of the field model (\ref{model2a})-(\ref{model2c}) equiped with the initial conditions (\ref{iniex4}) at time $t = 3$ with both the mesh size $\Delta x$ and $\Delta y$ being 0.025.}  
		\label{muoft2d2} 
	\end{figure}
	
	\begin{figure}[htp] 
		\subfigure{
			\includegraphics[width=0.5\linewidth]{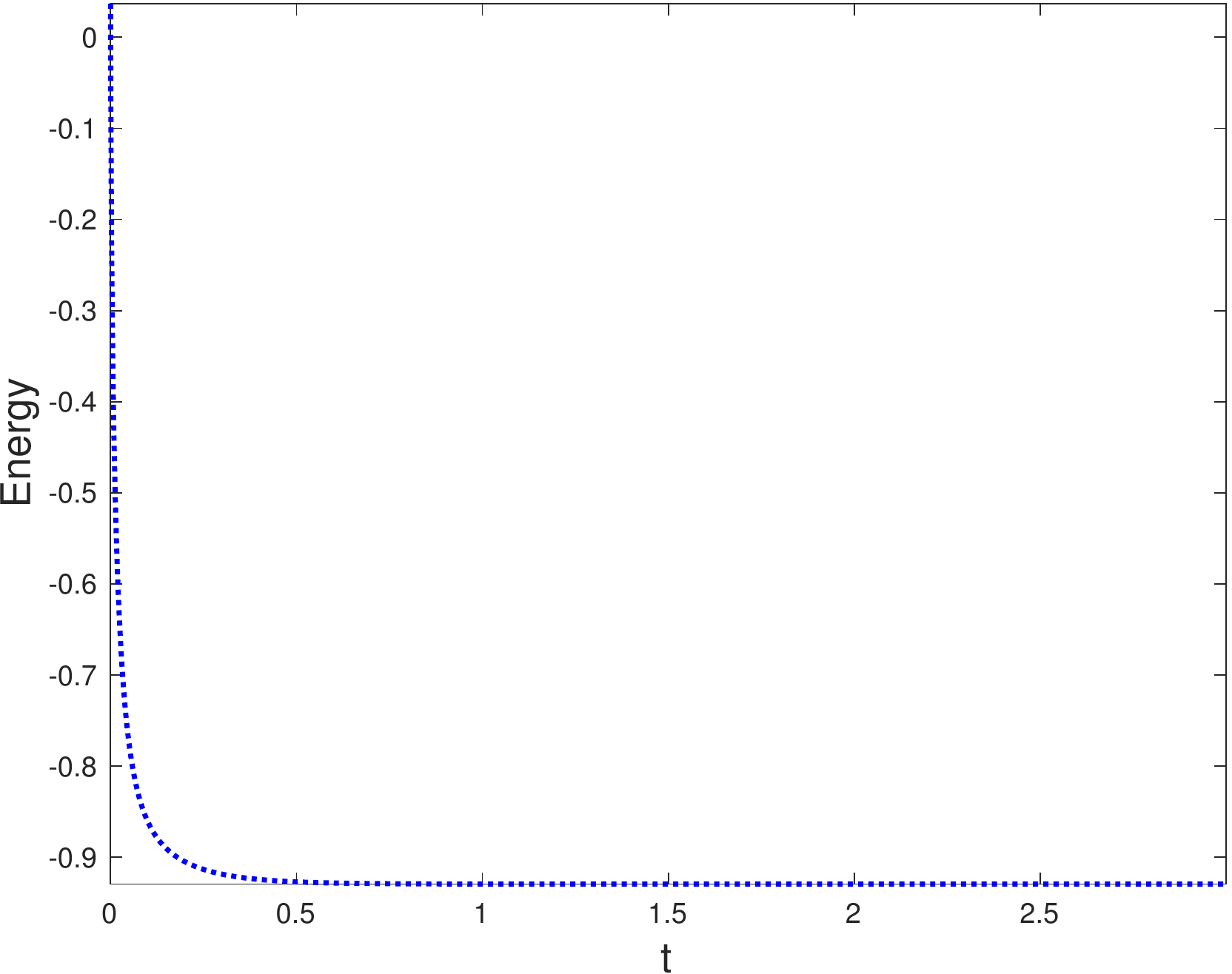}  
		}
		
		\caption{Multiple Species Example 2 in Two-dimension: the time-energy plot of the field model (\ref{model2a})-(\ref{model2c}) equiped with the initial conditions (\ref{iniex4}) with both the mesh size $\Delta x$ and $\Delta y$ being 0.02.}  
		\label{eoft2d2} 
	\end{figure}
	
	\subsection{The Keller-Segel Equations in Two-dimension}
	On domain $\Omega = [-L, L] \times [-L, L], L = 10$, we consider the Keller-Segel system which is a celebrated model for chemotaxis \cite{Keller1970, Keller1971}:
	\begin{equation}\label{ks}
	\begin{aligned}
	&\partial_t c_1(\boldsymbol{x}, t) = \Delta c_1 + \nabla \cdot \left( c_1 \nabla (\mathcal{W}*(c_1 + c_2)) \right),\\
	&\partial_t c_2(\boldsymbol{x}, t) = \Delta c_2 + \nabla \cdot \left( c_2 \nabla (\mathcal{W}*(c_1 + c_2)) \right),\\
	&c_m(\boldsymbol{x},0) = c^0_{m}(\boldsymbol{x}), ~~m=1,2, 
	\end{aligned}
	\end{equation}
	which means that the kernel $\mathcal{K} = 0$, the external potential $V_{\text{ext}} = 0$ and the Newtonian potential $\mathcal{W} = \frac{1}{2 \pi} \ln r$. The first two equations in (\ref{ks}) describe the time evolution of the density $c_m, m = 1, 2,$ where $\Delta c_m$ corresponds to the local diffusion and the nonlocal transport $\nabla \cdot \left( c_m \nabla (\mathcal{W}*(c_1 + c_2)) \right)$ is due to the nonlocal aggregation which is associated with the total mass of the two species. Furthermore, we define the total mass $M_c(t) = \int_{\Omega} (c_1 + c_2)(\boldsymbol{x}, t) \,\mathrm{d} \boldsymbol{x}$. 
	$M_c(t)$ is conserved because the system conserves individual mass, which means
	\begin{equation*}
	M_c(t) = M_c(0) =: M_c^0.
	\end{equation*} 
	Last but not least, we remark that the domain $[-L, L]^2, L = 10,$ can be regarded as an approximation of the whole space $\mathbb{R}^2$ because the attraction part makes the solutions effectively supported in the computational domain. 
	
	For the general single species Keller-Segel Equation in 2D, the mass of the species $M$, determines the long time behavior \cite{Blanchet2006, Liu2016, Perthame2006, He2019}. $M < 8 \pi$ with finite second moment means the unique smooth solution exists globally. On the contrary, $M > 8 \pi$ with finite second moment leads to blowup solutions in finite time.
	Next we explore the solution behavior of the two-species Keller-Segel system (\ref{ks}) which is recently studied in \cite{He2019}.
	
	\subsubsection{Small Initial Data Case}
	Here small initial data means the total mass $M_c^0$ is smaller than $8 \pi$ and thus the system (\ref{ks}) has global solutions \cite{He2019}.
	Set the initial conditions (\ref{model2c}) given by the following equations:
	\begin{equation*}
	\left\{
	\begin{array}{l}
	c_1^0 = \frac{3}{2\pi} \exp \left(-\frac{1}{8} \left(\left(x - 2\right)^2 + y^2\right) \right), \\
	c_2^0 = \frac{3}{2\pi} \exp \left(-\frac{1}{8} \left(\left(x + 2\right)^2 + y^2\right) \right),
	\end{array}
	\right.
	\end{equation*}
	where the total mass $M_c^0 < 8 \pi$. Figure \ref{c1oft_ex3} shows time evolution of the concentration $c_1$ and the steady state of this model at time $t = 50$, where we set the mesh size $\Delta x = 0.16, \Delta t = 0.01$. It's observed that due to the aggregation, the concentrations remain globally bounded.
	
	\begin{figure}[htp] 
		\subfigure[]{ 
			\includegraphics[width=0.3\textwidth]{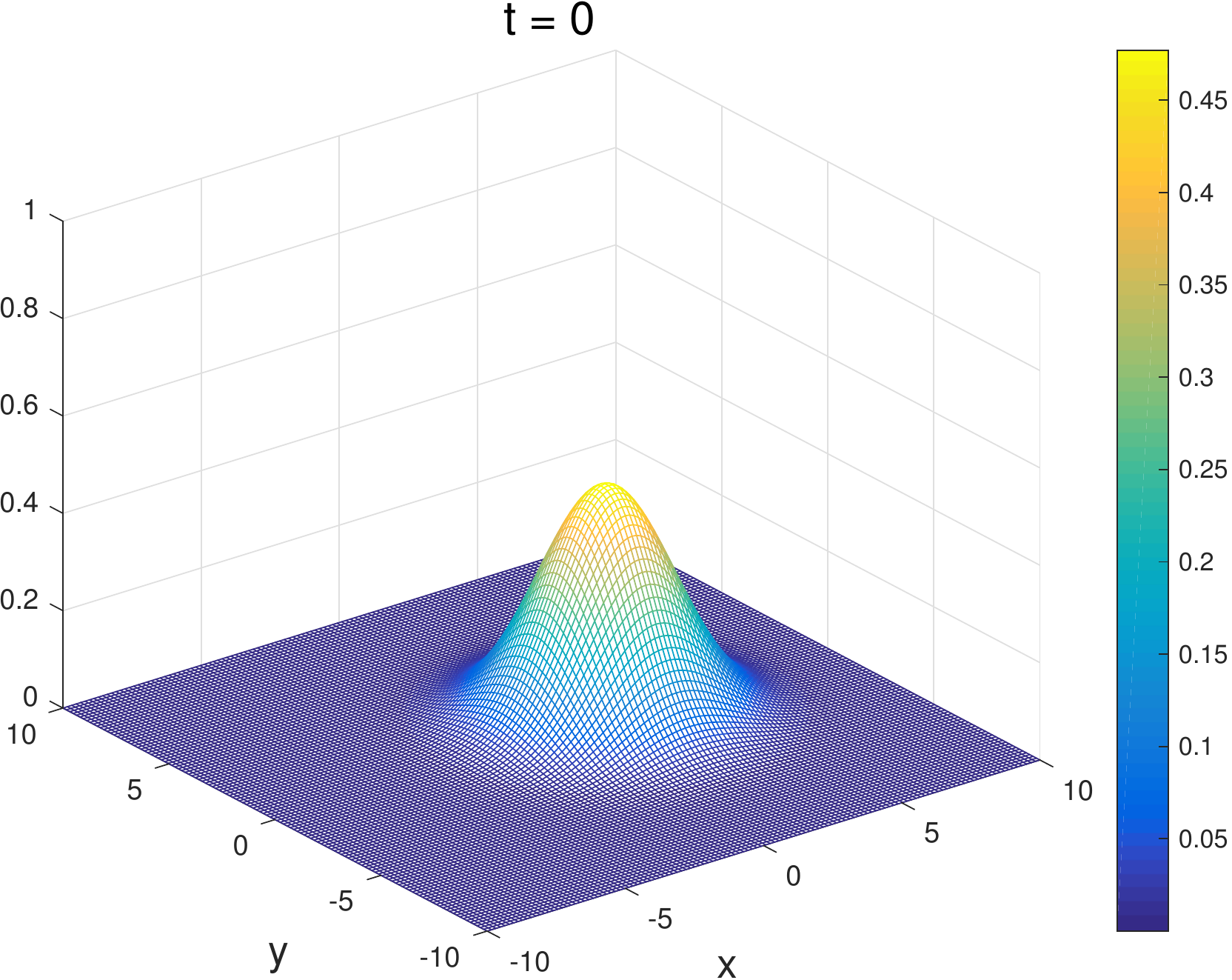}}  
		\subfigure[]{ 
			\includegraphics[width=0.3\textwidth]{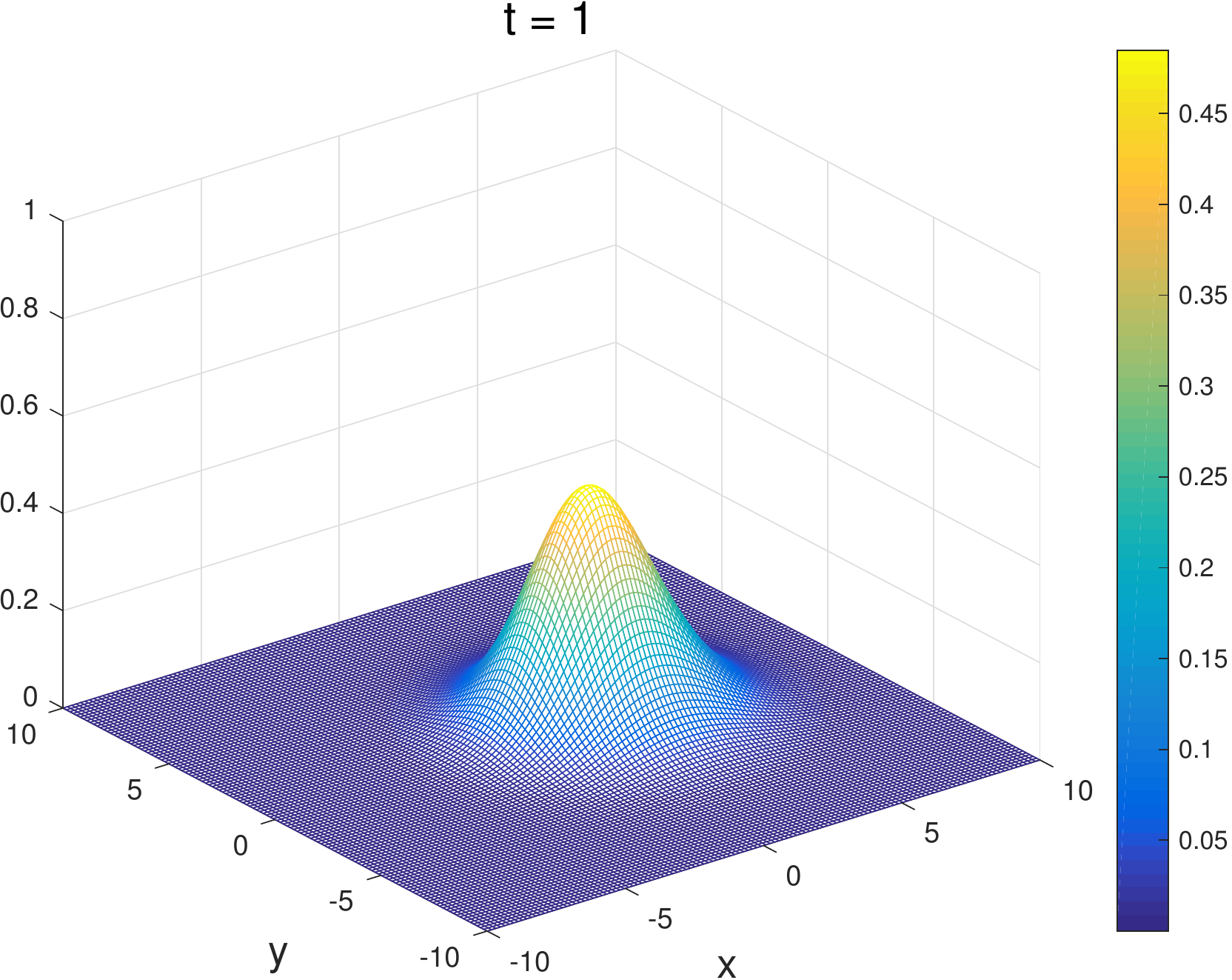}  
		}
		\subfigure[]{ 
			\includegraphics[width=0.3\textwidth]{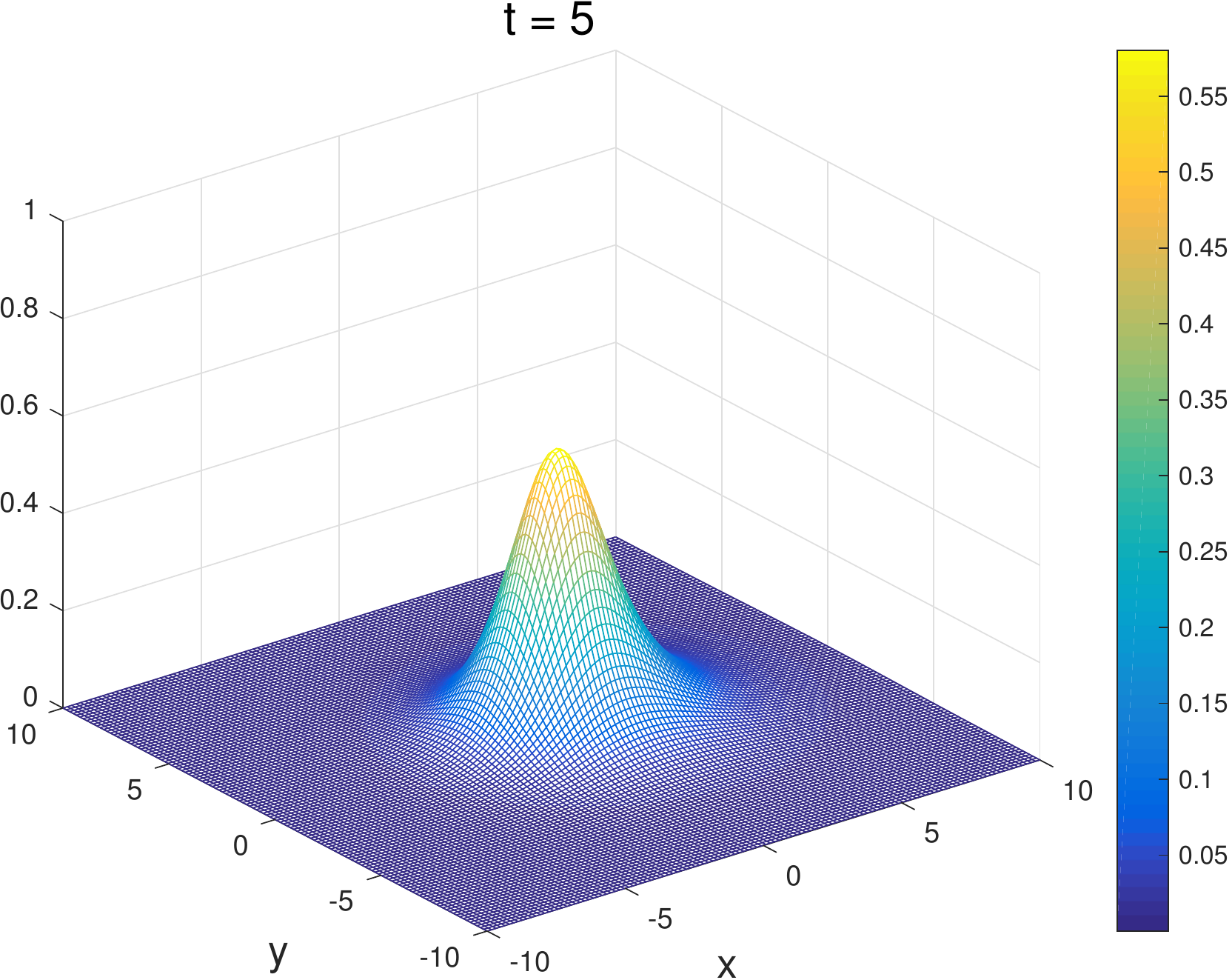}  
		}
		\subfigure[]{ 
			\includegraphics[width=0.3\textwidth]{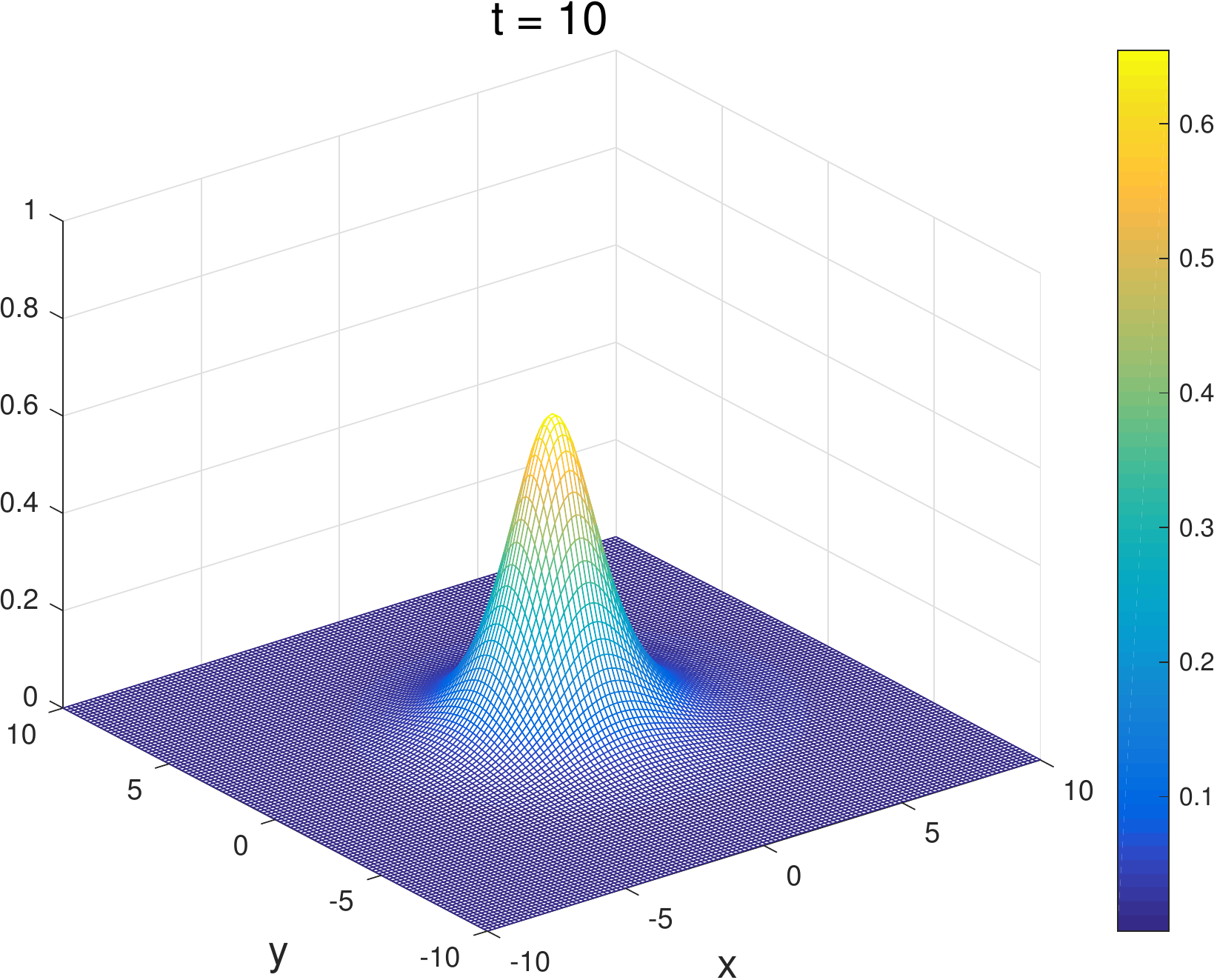} 
		}
		\subfigure[]{ 
			\includegraphics[width=0.3\textwidth]{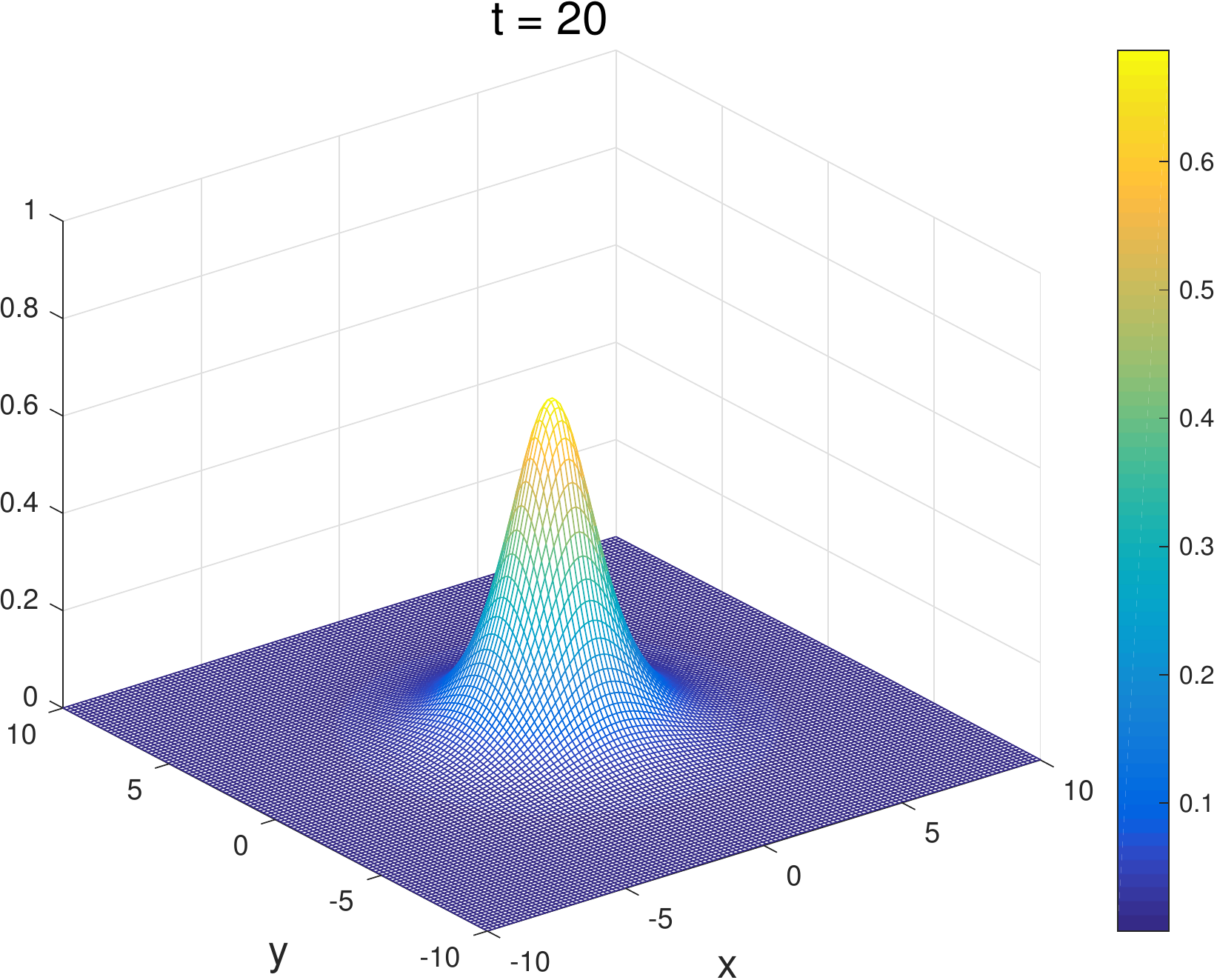}
		}
		\subfigure[]{ 		\includegraphics[width=0.3\textwidth]{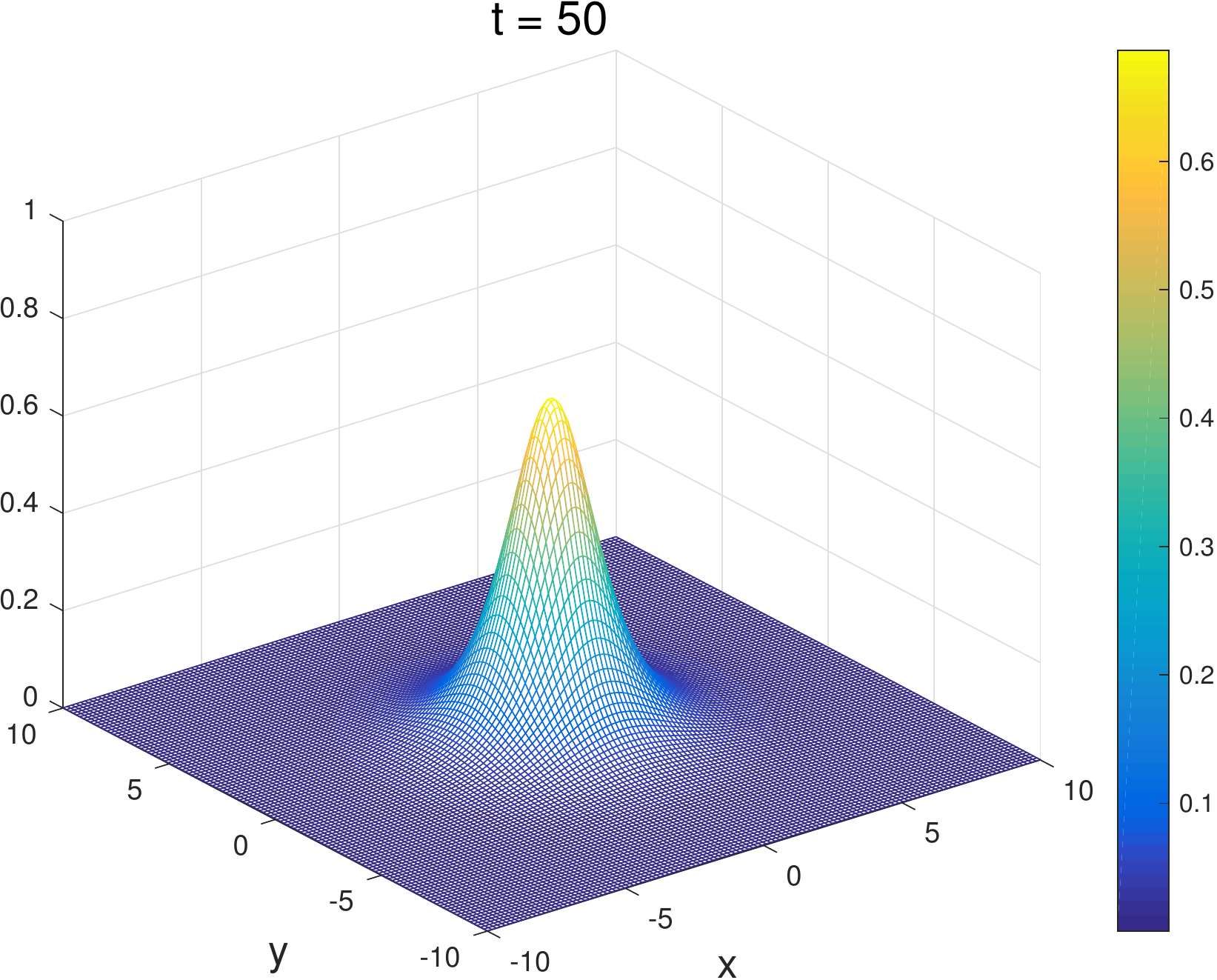}
		}
		\caption{The Keller-Segel Equations in Two-dimension: the space-concentration $c_1$ curves with both the mesh size $\Delta x$ and $\Delta y$ being 0.16 and the time $t$ changing from 0 to 50.}
		\label{c1oft_ex3}
	\end{figure}
	
	
	\subsubsection{Large initial data case}
	
	Here large initial data means the total mass $M_c^0$ is larger than $8 \pi$ and thus the solution blows up in finite time \cite{He2019}. In this part, we show two examples.
		
	{\bf Example 1: }At first, we set the initial conditions (\ref{model2c}) given by the following equations:
	\begin{equation}
	\label{iniex31}
	\left\{
	\begin{array}{l}
	c_1^0 = 4 \exp \left(-\frac{1}{4} (x^2 + y^2) \right), \\
	c_2^0 = 8 \exp \left(-\frac{1}{2} (x^2 + y^2) \right).
	\end{array}
	\right.
	\end{equation}
	Fig \ref{p1} shows the concentration $c_1$ and $c_2$ at time $t = 0, 0.05, 0.1, 0.15$ and time evolution of the $l^{\infty}$ norm of them, where we set the mesh size $\Delta x = 0.08, \Delta t = 0.01$. It's observed that all the solutions blow up in finite time and $c_2$ blows up faster than $c_1$ under the initial condition (\ref{iniex31}).
	
	\begin{figure}[htp] 
		\subfigure{
			\includegraphics[width=0.85\linewidth]{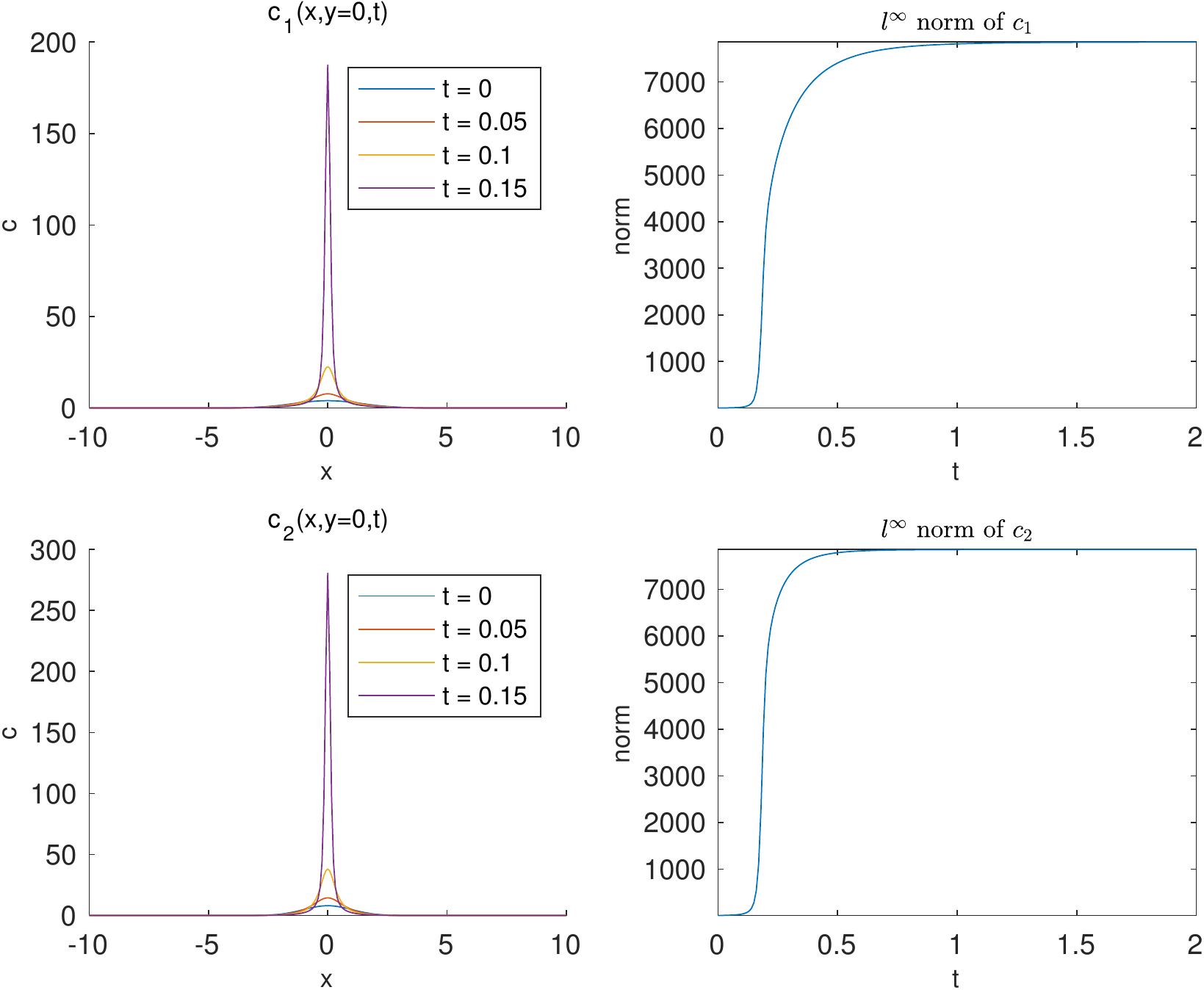}  
		}	
		\caption{The Keller-Segel Equations in Two-dimension: the concentration $c_1$ and $c_2$ at time $t = 0, 0.05, 0.1, 0.15$ and time evolution of the $l^{\infty}$ norm of them as for the initial condition (\ref{iniex31}).}  
		\label{p1} 
	\end{figure}
	
	{\bf Example 2: }Secondly, we set the initial conditions (\ref{model2c}) given by the following equations:
	\begin{equation}
	\label{iniex32}
	\left\{
	\begin{array}{l}
	c_1^0 = 12 \exp \left(-\left(\left(x-2\right)^2 + y^2\right) \right), \\
	c_2^0 = 12 \exp \left(-\left(\left(x+2\right)^2 + y^2\right) \right).
	\end{array}
	\right.
	\end{equation}
	\begin{figure}[htp] 
		\subfigure{
			\includegraphics[width=\linewidth]{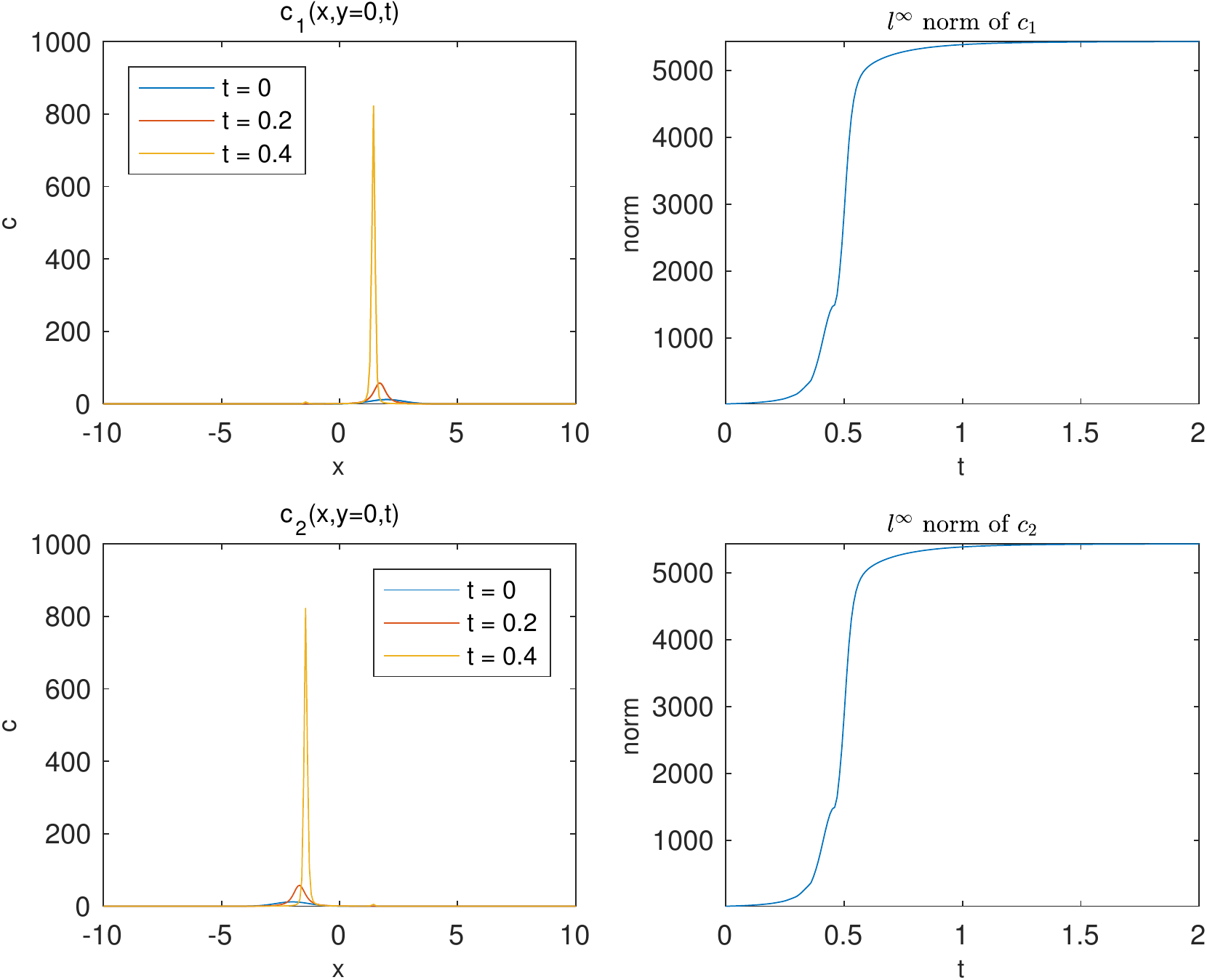}
		}	
		\caption{The Keller-Segel Equations in Two-dimension: the concentration $c_1$ and $c_2$ at time $t = 0, 0.2, 0.4$ and time evolution of the $l^{\infty}$ norm of them as for the initial condition (\ref{iniex32}).}  
		\label{p2} 
	\end{figure}
Fig \ref{p2} shows the concentration $c_1$ and $c_2$ at time $t = 0, 0.2, 0.4$ and time evolution of the $l^{\infty}$ norm of them, where we set the mesh size $\Delta x = 0.08, \Delta t = 0.01$. it is seen from Fig \ref{p2} that all the solutions blow up in finite time under the initial condition (\ref{iniex32}) and due to the nonlocal attraction, the two density functions move closer to each other before forming singular solutions.

	\section{Conclusion}
	
	In this paper, we focus on the symmetric form of the nonlinear and nonlocal parabolic model for multi-species ionic fluids (\ref{sym1a})-(\ref{sym1c}) and develop an unconditionally stable finite volume scheme. Our scheme is of second order in space, first order in time and preserves the analytical properties, such as positivity preservation, mass conservation and free energy dissipation. Furthermore, our scheme involves accurate and efficient fast algorithm on the convolution terms with singular but integrable kernels. 
	And the second order spatial accuracy of the fast convolution algorithm inherits from the finite-volume discretization of the density.
	We also provide series of numerical experiments to demonstrate the properties, such as unconditional stability, numerical convergence, energy dissipation, the finite size effect, the complexity in computing the convolution with singular kernels, the concentration of ions at the boundary and the blowup phenomenon of the Keller-Segel equations.
	
	\section{Appendix}
	In this appendix, we provide some proofs of theorems given in Section 2.
	
	\subsection{Proof of Theorem \ref{thm1b}} \label{app1}
	\begin{proof}
		The finite volume scheme (\ref{bsys2})-(\ref{bsys3}) with (\ref{sysodeb}) and (\ref{sysfluxb}) can be written in the following form 
		{\small
			\begin{equation}
			\label{bpp}
			\bar{c}_{m, j}^{n + 1} +  A_{m, j + \frac{1}{2}}^{n} \left\{ \frac{\bar{c}_{m, j }^{n + 1}}{\exp\{- f_{m, j}^{n} \}} - \frac{\bar{c}_{m, j + 1}^{n + 1}}{\exp\{-f_{m, j + 1}^{n} \}} \right\} + A_{m, j - \frac{1}{2}}^{n} \left\{ \frac{\bar{c}_{m, j }^{n + 1}}{\exp\{-f_{m, j}^{n} \}} - \frac{\bar{c}_{m, j - 1}^{n + 1}}{\exp\{-f_{m, j - 1}^{n} \}} \right\} = \bar{c}_{m, j}^{n},
			\end{equation}
		}
		where the coefficient
		\begin{equation*}
		A_{m, j + \frac{1}{2}}^{n} =  \exp\{-f_{m, j + \frac{1}{2}}^n\} \Delta t/ (\Delta x)^2.
		\end{equation*}
		The positivity preserving property can be proved by a contradiction argument.
		
		Assume that $\bar{c}_{m, j}^{n} \geqslant 0$ for any $j$ and $m$, but $\bar{c}_{m, j}^{n + 1} < 0$ for some $j$ and $m$. Assume that $\frac{\bar{c}_{m, j }^{n + 1}}{\exp\{- f_{m, j}^{n} \}}$ takes the minimum at $j = j_0$ and $m = m_0$. Taking $j = j_0$ and $m = m_0$ in equation (\ref{bpp}) we find that the left side of the equation is negative while the right side is non-negative. Thus we conclude that the explicit-implicit scheme preserves positivity.
	\end{proof}
	
	\subsection{Proof of Theorem \ref{thm2b} }  \label{app2}
	\begin{proof}
		We only need to prove that 
		\begin{equation}
		\begin{aligned}
		\dfrac{\mathrm{d}}{\mathrm{d} t} E_{\Delta}(t) &= \dfrac{\mathrm{d}}{\mathrm{d} t} \left[ \Delta x \sum_{m = 1}^M \sum_{j} \bar{c}_{m, j} \log \frac{\bar{c}_{m, j}}{\exp\{- \frac{1}{2} f_{m, j}\}} \right] \\
		&= \Delta x \sum_{m = 1}^M \sum_{j} \left[ \log \dfrac{\bar{c}_{m, j}}{\exp\{ -\frac{1}{2} f_{m, j} \}} \dfrac{\mathrm{d} }{\mathrm{d} t} \bar{c}_{m, j} + \bar{c}_{m, j} \dfrac{\mathrm{d}}{\mathrm{d} t} \log \dfrac{\bar{c}_{m, j}}{\exp\{ -\frac{1}{2} f_{m, j} \}} \right] \\
		&= \Delta x \sum_{m = 1}^M \sum_{j} \left[ \log \dfrac{\bar{c}_{m, j}}{\exp\{ -\frac{1}{2} f_{m, j} \}} \dfrac{\mathrm{d} }{\mathrm{d} t} \bar{c}_{m, j} + \dfrac{\mathrm{d}}{\mathrm{d} t} \bar{c}_{m, j} + \frac{1}{2} \bar{c}_{m, j} \dfrac{\mathrm{d}}{\mathrm{d} t}  f_{m, j} \right] \\
		&= \Delta x \sum_{m = 1}^M \sum_{j} \left\{ \log \dfrac{\bar{c}_{m, j}}{\exp\{ -f_{m, j} \}} + 1 \right\} \dfrac{\mathrm{d}}{\mathrm{d} t} \bar{c}_{m, j} \\
		&= \Delta x \sum_{m = 1}^{M} \sum_{j}\mu_{m, j} \dfrac{\mathrm{d}}{\mathrm{d} t} \bar{c}_{m, j}.
		\end{aligned}
		\end{equation}
		The fourth equality holds because of $T^{\mathcal{K}}_{i-j} = T^{\mathcal{K}}_{j-i}$ and $T^{\mathcal{W}}_{i-j} = T^{\mathcal{W}}_{j-i}$.
		
		According to (\ref{bsys2}), we have 
		
		\begin{equation}
		\dfrac{\mathrm{d}}{\mathrm{d} t} E_{\Delta}(t) = - \sum_{m = 1}^{M} \sum_{j} \left\{ \left\{ \log \dfrac{\bar{c}_{m, j}}{\exp\{ - f_{m, j} \}} + 1 \right\} (F_{m, j + \frac{1}{2}} - F_{m, j - \frac{1}{2}}) \right\}.
		\end{equation}
		Using Abel's summation formula, we obtain
		\begin{equation*}
		\begin{aligned}
		\dfrac{\mathrm{d}}{\mathrm{d} t} E_{\Delta}(t) 
		=& -  \sum_{m = 1}^{M} \sum_{j} \left\{ \log \dfrac{\bar{c}_{m, j}}{\exp\{ - f_{m, j} \}} - \log \dfrac{\bar{c}_{m, j + 1}}{\exp\{ - f_{m, j + 1} \}} \right\} F_{m, j + \frac{1}{2}} \\
		=& \frac{1}{\Delta x} \sum_{m = 1}^{M} \sum_{j} \exp\{- f_{m, j + \frac{1}{2}}(t)\} \cdot \\
		&\left\{ \log \dfrac{\bar{c}_{m, j}(t)}{\exp\{ -f_{m, j}(t) \}} - \log \dfrac{\bar{c}_{m, j + 1}(t)}{\exp\{ -f_{m, j + 1}(t) \}} \right\}   \left\{ \frac{\bar{c}_{m, j + 1}(t)}{\exp\{-f_{m, j + 1}(t)\}} - \frac{\bar{c}_{m, j }(t)}{\exp\{-f_{m, j}(t)\}}\right\}  \\
		=&- \frac{1}{\Delta x} \sum_{m = 1}^{M} \sum_{j} \exp\{-f_{m, j + \frac{1}{2}}(t)\} \cdot \frac{1}{\beta_{m, j}(t)} \left\{ \frac{\bar{c}_{m, j + 1}(t)}{\exp\{-f_{m, j + 1}(t)\}} - \frac{\bar{c}_{m, j }(t)}{\exp\{-f_{m, j}(t)\}}\right\}^2 \\
		=& - D_{\Delta}(t) \leqslant 0, \quad \forall t \geqslant 0,
		\end{aligned}
		\end{equation*}
		where $\beta_{m, j}(t)$ is a point between $\frac{\bar{c}_{m, j }(t)}{\exp\{-f_{m, j}(t)\}}$ and $\frac{\bar{c}_{m, j + 1}(t)}{\exp\{-f_{m, j + 1}(t)\}}$ due to the mean-value theorem.
		
	\end{proof}

\bibliographystyle{plain}
\bibliography{arxiv_paper02.bib}
	
\end{document}